\pgfplotsset{compat = newest}
\setlist{align=left, left=1em, parsep=0pt, topsep=-\baselineskip, after=\vspace{\baselineskip}}
\setlist{topsep=.5em, itemsep=0em} 
\DeclarePairedDelimiter\ceil{\lceil}{\rceil}
\DeclarePairedDelimiter\floor{\lfloor}{\rfloor}
\newcommand{\fa}{\forall\,}
\newcommand{\indicate}[1]{\mathbbm{1}_{#1}}
\newcommand{\abs}[1]{\left\lvert#1\right\rvert} % Absolutbetrag
\definecolor{burgundy}{rgb}{0.5, 0.0, 0.13}
\definecolor{darkgreen}{rgb}{0.0, 0.39, 0.0}
\definecolor{darkmagenta}{rgb}{0.55, 0.0, 0.55}
\definecolor{darkorange}{rgb}{1.0, 0.55, 0.0}
\definecolor{darkred}{rgb}{0.55, 0.0, 0.0}
\definecolor{darkgray}{rgb}{0.66, 0.66, 0.66}
\definecolor{lightblue}{rgb}{0.68, 0.85, 0.9}
\definecolor{lightgray}{rgb}{0.83, 0.83, 0.83}
\definecolor{mauve}{rgb}{0.88, 0.69, 1.0}
\definecolor{mauvetaupe}{rgb}{0.57, 0.37, 0.43}
\definecolor{navyblue}{rgb}{0.0, 0.0, 0.5}
\DeclareMathOperator{\dCup}{\mathaccent\cdot\cup}
\newcommand\dSupseteq{\mathrel{\ooalign{$\supseteq$\cr
			\hidewidth\raise.225ex\hbox{$\cdot \ \mkern.5mu$}\cr}}}
\newcommand\dSubseteq{\mathrel{\ooalign{$\subseteq$\cr
			\hidewidth\raise.225ex\hbox{$\cdot \, \mkern.5mu$}\cr}}}
\newcommand{\sumOver}[3]{\sum_{{#1}={#2}}^{#3}}
\newcommand{\prodOver}[3]{\prod_{{#1}={#2}}^{#3}}
\renewcommand{\floor}[1]{\left\lfloor #1 \right \rfloor }
\renewcommand{\ceil}[1]{\left\lceil #1 \right \rceil }
\newcommand{\R}{\mathbb{R}}
\newcommand{\N}{\mathbb{N}}
\DeclareMathOperator{\STS}{{STS}}
\DeclareMathOperator{\ess}{ess}
\DeclareMathOperator{\OPT}{OPT}
\newcommand{\mH}{\mathcal{H}}
\newcommand{\mK}{\mathcal{K}}
\newcommand{\mO}{\mathcal{O}}
\newcommand{\of}[1]{\left(#1\right)}
\newcommand{\paren}[1]{\left(#1\right)}
\newcommand{\ofS}[1]{\left[#1\right]}
\newcommand{\set}[1]{\left\{ #1 \right\}}
\newcommand{\downTo}{\downarrow}
\newcommand\restrict[2]{{% we make the whole thing an ordinary symbol
		\left.\kern-\nulldelimiterspace % automatically resize the bar with \right
		#1 % the function
		\vphantom{\big|} % pretend it's a little taller at normal size
		\right|_{#2} % this is the delimiter
}}
\newcommand{\comment}[1]{}
\theoremstyle{definition}
\newtheorem{theorem}{Theorem}[section] 
\newtheorem{lemma}[theorem]{Lemma}
\newtheorem{corollary}[theorem]{Corollary}
\newtheorem{definition}[theorem]{Definition}
\newtheorem*{observation*}{Observation}
\newtheorem{remark}[theorem]{Remark}
\newtheorem{proposition}[theorem]{Proposition}
\newtheorem{conjecture}[theorem]{Conjecture}
\definecolor{codegreen}{rgb}{0,0.6,0}
\definecolor{codegray}{rgb}{0.5,0.5,0.5}
\definecolor{codepurple}{rgb}{0.58,0,0.82}
\definecolor{backcolour}{rgb}{0.95,0.95,0.92}
\lstdefinestyle{mystyle}{
	backgroundcolor=\color{backcolour},   
	commentstyle=\color{codegreen},
	keywordstyle=\color{magenta},
	numberstyle=\tiny\color{codegray},
	stringstyle=\color{codepurple},
	basicstyle=\ttfamily\footnotesize,
	breakatwhitespace=false,         
	breaklines=true,                 
	captionpos=b,                    
	keepspaces=true,                 
	numbers=left,                    
	numbersep=5pt,                  
	showspaces=false,                
	showstringspaces=false,
	showtabs=false,                  
	tabsize=2
}
\title{Codegree conditions for (fractional) Steiner triple systems}
\author[Michael Zheng]{Michael Zheng\,\orcidlink{0009-0001-2406-6130}}
\address{
	Department of Mathematics and Computer Science,
	Emory University,
	Atlanta, USA
}
\email{xiangxiang.michael.zheng@emory.edu}
\date{October 6, 2025}
\begin{document}
	\begin{abstract}
		We establish an upper bound on the minimum codegree necessary for the existence of spanning, fractional Steiner triple systems in $3$-uniform hypergraphs. This improves upon a result by Lee in 2023. In particular, together with results from Lee's paper, our results imply that if $n$ is sufficiently large and satisfies some necessary divisibility conditions, then a $3$-uniform, $n$-vertex hypergraph $H$ contains a Steiner triple system if every pair of vertices forms an edge in $H$ with at least $0.8579n$ other vertices.
	\end{abstract}
	
	\maketitle
		
	\section{Introduction}

	Two driving directions in the study of combinatorics have been extremal graph theory and more recently its generalizations to uniform hypergraphs. Broadly speaking, extremal graph theory studies the ``minimal density'' (typically measured by the average degree or minimum degree of the graph) at which certain structures appear in the graph. One family of problems are those of \emph{Dirac-type}, where we are interested at what minimum degree certain spanning structures are contained in the graph. The prototypical example for a Dirac-type result is Dirac's theorem for Hamiltonian cycles: 

\begin{theorem}[Dirac 1952, {\cite{dirac_original}}]
	\label{thm:dirac_cycle}
	Every graph with $n \geq 3$ vertices and minimum degree at least $n / 2$ has a Hamiltonian cycle. 
\end{theorem}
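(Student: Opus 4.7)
The plan is to argue by contradiction using the classical ``longest path'' rotation technique. The first step is to establish that $G$ must be connected: if not, the smaller component contains at most $\floor{n/2}$ vertices, so any vertex in it would have degree strictly less than $n/2$, violating the hypothesis.

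Next, let $P = v_1 v_2 \ldots v_k$ be a longest path in $G$. By the maximality of $P$, every neighbor of $v_1$ and every neighbor of $v_k$ lies on $P$. I would then manufacture a cycle on $V(P)$ via a pigeonhole argument. Define
\[ A = \set{i \in \set{1,\ldots,k-1} : v_1 v_{i+1} \in E(G)}, \qquad B = \set{i \in \set{1,\ldots,k-1} : v_i v_k \in E(G)}. \]
Since all neighbors of $v_1$ and $v_k$ lie on $P$, we have $\abs{A} = \deg(v_1) \geq n/2$ and $\abs{B} = \deg(v_k) \geq n/2$, hence $\abs{A} + \abs{B} \geq n > k - 1 \geq \abs{A \cup B}$. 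Therefore $A \cap B \neq \emptyset$, and any $i \in A \cap B$ produces the cycle
\[ C : v_1 v_2 \ldots v_i v_k v_{k-1} \ldots v_{i+1} v_1 \]
of length exactly $k$.

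The final step is to conclude $k = n$. If instead $k < n$, then by connectivity some vertex $w \notin V(C)$ is adjacent to a vertex $u \in V(C)$. Deleting an edge of $C$ incident to $u$ and appending $w$ yields a path of length $k$, strictly longer than $P$, contradicting its maximality. Hence $C$ is Hamiltonian.

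The crux of the argument is the pigeonhole step that locates the ``crossing'' index $i$; everything else is bookkeeping. This is what makes the threshold $n/2$ tight rather than something like $n/2 + o(n)$: the two degree lower bounds on the endpoints $v_1$ and $v_k$ combined must exceed the $k - 1 \leq n - 1$ available positions along $P$ by exactly one, forcing the cycle to close. I do not anticipate any serious obstacle beyond identifying this counting observation cleanly.
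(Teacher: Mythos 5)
Your argument is correct and complete: it is the standard longest-path/pigeonhole proof of Dirac's theorem, and the paper itself states this result only as a cited classical theorem without reproducing a proof, so there is nothing to diverge from. The connectivity step, the counting $\abs{A}+\abs{B}\geq n > k-1 \geq \abs{A\cup B}$ forcing a crossing index, and the extension of the resulting cycle to a Hamiltonian one all check out.
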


Note that this bound is tight as witnessed by $K_{\floor{n/2}-1, \ceil{n/2}+1}$. As an easy corollary of Theorem \ref{thm:dirac_cycle}, one obtains the optimal minimum degree condition for perfect matchings: 

\begin{corollary}
	\label{thm:dirac_matching}
	Every graph with $n \geq 2$ vertices, $n \equiv 0 \pmod{2}$, and minimum degree at least $n / 2$ has a perfect matching. 
\end{corollary}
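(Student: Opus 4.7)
\medskip

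The plan is to deduce the corollary directly from Theorem \ref{thm:dirac_cycle} by extracting a perfect matching from a Hamiltonian cycle. The hypothesis $\delta(G) \geq n/2$ with $n \geq 2$ even in particular gives $n \geq 2$, and if $n = 2$ the minimum degree condition forces the single edge, which is itself a perfect matching; so I may assume $n \geq 4$, in which case $n \geq 3$ and Theorem \ref{thm:dirac_cycle} applies.

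First I would invoke Theorem \ref{thm:dirac_cycle} to obtain a Hamiltonian cycle $C = v_1 v_2 \cdots v_n v_1$ in $G$. The key observation is that the edge set of a cycle on an even number of vertices decomposes into two perfect matchings: explicitly, writing $n = 2k$, the two sets
\[
M_1 = \set{v_1 v_2,\, v_3 v_4,\, \ldots,\, v_{2k-1} v_{2k}}, \qquad M_2 = \set{v_2 v_3,\, v_4 v_5,\, \ldots,\, v_{2k} v_1}
\]
partition $E(C)$, and each $M_i$ covers every vertex of $G$ exactly once. Taking $M_1$ (or $M_2$) therefore yields the desired perfect matching in $G$.

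There is essentially no obstacle here; the parity assumption $n \equiv 0 \pmod 2$ is precisely what is needed to make the alternation along the Hamiltonian cycle work, and the degree threshold $n/2$ is inherited verbatim from Theorem \ref{thm:dirac_cycle}. One could alternatively note that this threshold is tight for the matching problem as well, witnessed, for instance, by an unbalanced complete bipartite graph $K_{n/2-1,\, n/2+1}$, which has minimum degree $n/2-1$ but no perfect matching; this shows the corollary cannot be strengthened beyond what Dirac's theorem already provides.
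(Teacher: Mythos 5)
Your proof is correct and is exactly the argument the paper intends when it calls this an ``easy corollary'' of Theorem \ref{thm:dirac_cycle}: handle $n=2$ directly, apply Dirac's theorem for even $n\geq 4$, and take alternate edges of the Hamiltonian cycle. The tightness remark via $K_{n/2-1,\,n/2+1}$ likewise matches the paper's extremal example.
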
 

Given these results, it is natural to ask analogous questions in uniform hypergraphs. While various generalizations for cycles in higher uniformities were considered, generalizations of Corollary \ref{thm:dirac_matching} with respect to hypergraphs primarily focused on studying minimum degree or minimum codegree conditions for hypergraph matchings (see for instance \cite{kühn_osthus_matching, rödl_rucinski_szemeredi,rödl_rucinski_szemeredi_co, han_person_schacht,treglown_zhao,khan_matching,khan_four_matching}).

Inspired by Linial's presentation on high-dimensional combinatorics (see \cite{linial}), Lee proposes a different generalization. Namely, going from $2$-uniformity to $3$-uniformity, if the primary property of a perfect matching is that every vertex is covered by exactly one edge, why not consider the structure where instead every pair of vertices gets covered by exactly one hyperedge? 

These structures are known as \emph{Steiner triple systems} and are of independent interest in the theory of \emph{combinatorial designs}. For such a design to exist, the corresponding parameters must satisfy certain divisibility conditions. For instance, for there to be a perfect matching on $n$ vertices, $n$ must be an even number. Similarly, it turns out that Steiner triple systems can only exist for $n \equiv 1,3 \pmod{6}$ and then have $n (n-1) / 6$ edges. 

In \cite{hyunwoo}, Lee establishes the first non-trivial upper bound on the \emph{minimum codegree threshold} for Steiner triple systems in $3$-uniform hypergraphs. In particular, he proves the following \emph{transversal} / \emph{rainbow} result:

\begin{theorem}[Lee 2023, {\cite[Thm.$\ $1.6, Thm.$\ $1.8]{hyunwoo}}]
	For any $\varepsilon > 0$, there exists $n_0 = n_0(\varepsilon)$ such that the following holds for every $n \geq n_0$ satisfying $n \equiv 1, 3 \pmod{6}$: Let $\mH = \set{H_1, \dots, H_{n (n-1)/6}}$ be a family of $3$-uniform hypergraph on $n$ vertices sharing the same vertex set $V$. If the minimum codegree of $H_i$ is at least 
	\begin{equation*}
		\paren{\frac{3+ \sqrt{57}}{12} + \varepsilon} n = \paren{0.879\ldots + \varepsilon } n
	\end{equation*}
	for all $i \in [n (n-1) / 6]$, then there exists a \emph{transversal} Steiner triple system $S$. Specifically, the vertex set of $S$ is $V$ and there exists a bijective function $\varphi \colon E(S) \longrightarrow [n (n-1) / 6]$ such that $e \in E(H_{\varphi(e)})$ for all $e \in E(S)$. 
	\label{thm:main_hyunwoo}
\end{theorem}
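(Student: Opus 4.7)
The natural strategy is the \emph{absorption plus nibble} framework adapted to the transversal setting, where each triple of the eventual STS must use a distinct color from $[n(n-1)/6]$. I would proceed in three broad steps. First, randomly reserve a small vertex set $V_0 \subseteq V$ of size $\alpha n$ and a small color set $\mC_0 \subseteq [n(n-1)/6]$ of size $\Theta(\alpha n^2)$, with $\alpha = \alpha(\varepsilon)$ small; standard concentration shows that the codegree hypothesis is inherited (up to an $\varepsilon/2$ loss) by the induced sub-hypergraphs of each $H_i$ on $V_1 := V \setminus V_0$. On this reservation, construct an \emph{absorbing partial transversal STS} $T_0$, whose role is to repair any ``leftover'' of $o(n^2)$ uncovered pair--color assignments $(p, i)$ with $p \in [V_1]^2$ and $i \notin \mC_0$; concretely, for every such leftover one should be able to extend $T_0$ by a matching of triples using exactly those $i$ so that the union is a complete transversal STS on $V$. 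The standard recipe is the \emph{switcher gadget}: for every possible target $(uv, i)$, plant a constant-size rainbow configuration that locally toggles between including $(uv, i)$ and not, and greedily pack disjoint families of these across all targets.

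With the absorber in place, I would run a semi-random (Rödl-nibble / Pippenger--Spencer) procedure on the auxiliary hypergraph whose hyperedges are the compatible (triple, color) pairs on $V_1$ with colors drawn from $[n(n-1)/6] \setminus \mC_0$. The codegree hypothesis supplies the density and approximate regularity needed to extract a transversal partial STS leaving only $o(n^2)$ pairs uncovered, which the absorber then cleans up, yielding the desired transversal STS $S$.

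The main obstacle — and almost certainly the source of the explicit constant $(3+\sqrt{57})/12$, which is the positive root of $6\delta^2 - 3\delta - 2 = 0$ — is the absorber step. The threshold should arise from a counting estimate of the form ``for every pair $(u,v)$ and every color $i$, sufficiently many switcher gadgets of a prescribed shape exist in $H_i$,'' which reduces to a quadratic inequality in the normalized codegree $\delta$. Designing a gadget whose existence analysis produces precisely this quadratic, and then showing that the resulting absorbers can be packed disjointly while simultaneously respecting the transversal constraint across all of $\mC_0$, is where the bulk of the technical work lies. A secondary difficulty is coupling the probabilistic reservations of $V_0$ and $\mC_0$ so that the leftover of the nibble is actually compatible with the absorber that was built in advance, which typically requires carefully balancing the reserved codegree slack against the number of switcher types that must be accommodated.
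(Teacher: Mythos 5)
Your high-level framework (reserve, absorb, nibble) is in the right family---Lee's proof is indeed an iterative-absorption argument---but you have mislocated the source of the constant, and this hides the actual mathematical content of the theorem. As recounted in Subsection \ref{subsec:reduct}, Lee's argument splits into two independent results: a reduction theorem stating $\theta_{\STS} = \max\set{\theta_{\STS}^\ast, 3/4}$, where $\theta_{\STS}^\ast$ is the codegree threshold for \emph{perfect fractional Steiner triple systems}, and a separate bound $\theta_{\STS}^f(\varepsilon) \leq (3+\sqrt{57})/12$ on that fractional threshold. The entire absorption/cover-down machinery---the part you describe as the bottleneck---only costs $3/4 + \varepsilon$; the number $(3+\sqrt{57})/12 \approx 0.879$ arises exclusively from the fractional relaxation, where one must show that a $3$-uniform hypergraph with essential minimum codegree $\delta n$ and nearly complete shadow admits a nonnegative weighting on its triples giving every pair weight exactly one. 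Lee proves this by adapting Dross's max-flow/linear-programming analysis of fractional $K_3$-decompositions, and that is where the quadratic whose root you correctly identified appears. Attributing the constant to a switcher-gadget count in the absorber is not a cosmetic misstep: it is precisely because the constant lives in the fractional part that the present paper can improve the bound to $0.8579n$ by replacing Dross's method with Delcourt--Postle edge-gadgets while leaving the absorption untouched. A proof organized as you propose, with the threshold determined by absorber density, would not produce $(3+\sqrt{57})/12$ and would not have this modular structure.

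A second, related gap: your sketch never introduces the fractional Steiner triple system, yet it is the indispensable intermediate object. In the iterative absorption, each cover-down step deletes all triples through already-covered pairs, so the residual hypergraphs necessarily contain pairs of zero codegree (this is why Definition \ref{def:frac_STS_thresh} insists on $\delta(\partial H) \geq (1-\varepsilon)(n-1)$ rather than completeness); the approximate decompositions needed at each stage are extracted from perfect fractional STSs of these residual hypergraphs via a boosting argument, not from a one-shot nibble on the original family $\mH$. Without the fractional existence result at codegree $\paren{(3+\sqrt{57})/12 + \varepsilon}n$, the nibble step of your outline has no density input and the claimed $o(n^2)$ leftover cannot be established.
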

\begin{remark}
	Note that this theorem immediately implies the same result for the non-transversal setting since one could take all the hypergraphs in $\mH$ to be the same.
\end{remark}
In this paper, we show the following improvement on Lee's result.
\begin{theorem}
	Let $x^\ast$ be the unique root of the polynomial $p(x) = 8x^3 - 22x^2 + 10x - 1$ in $[0, 1/6]$. Then, Theorem \ref{thm:main_hyunwoo} is true even if the minimum codegree of each $H_i$ is at least 
	\label{thm:main_thesis}
	\begin{equation*}
		\paren{1- x^\ast + \varepsilon} n = \paren{0.8578\ldots + \varepsilon}n.
	\end{equation*}
\end{theorem}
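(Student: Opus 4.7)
The plan is to decompose the proof in the same manner as \cite{hyunwoo}: separately handle (a) an improved codegree threshold for the existence of a spanning \emph{fractional} Steiner triple system in a single $3$-uniform hypergraph, and (b) Lee's rainbow random-greedy plus absorbing scheme that lifts such a fractional STS into the transversal integral STS of Theorem \ref{thm:main_hyunwoo}. Only step (a) is sensitive to the numerical threshold, so it suffices to prove: \emph{for every $\varepsilon>0$, every sufficiently large $n$-vertex $3$-uniform hypergraph $H$ with $\delta_2(H) \geq (1-x^\ast+\varepsilon)n$ admits a spanning fractional STS}. The absorption step of \cite{hyunwoo} can then be quoted unchanged.

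To establish the fractional bound I would work with the natural LP: variables $w_e \geq 0$ for $e \in E(H)$, constraints $\sum_{e \supseteq \{u,v\}} w_e = 1$ for every pair $\{u,v\} \in \binom{V(H)}{2}$. By LP duality (Farkas' lemma), a fractional STS fails to exist iff there is a signed pair-weighting $\mu \colon \binom{V(H)}{2} \to \R$ with $\sum_{\{u,v\} \subseteq e}\mu(\{u,v\}) \leq 0$ for every $e \in E(H)$ but $\sum_{p} \mu(p) > 0$. The goal is to exclude such a $\mu$ under the codegree hypothesis. A symmetrization and rescaling argument should first let us take $\mu$ two-valued, so that $\mu^{-1}(+)$ determines a distinguished pair-set $P^+ \subseteq \binom{V(H)}{2}$; the edge inequalities then assert that every $e \in E(H)$ contains at most a controlled number of pairs in $P^+$.

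The new cubic threshold $p(x) = 8x^3 - 22x^2 + 10x - 1$ should arise from a two-parameter refinement of Lee's analysis. One introduces a ``deficiency'' parameter $x \in [0,1/6]$ capturing the fraction of vertices whose incident $P^+$-pairs sit in an intermediate regime (the upper bound $1/6$ is the STS edge density $\binom{n}{2}/\binom{n}{3}\cdot 3$, a natural a priori ceiling on how far $P^+$ can extend); Lee's quadratic threshold $(3+\sqrt{57})/12$ effectively uses only one such parameter. Combining the codegree lower bound with a local count of how many $P^+$-pairs each edge of $H$ avoids produces a system of polynomial inequalities, and maximizing the infeasibility margin against the codegree threshold reduces to a univariate cubic whose relevant root is precisely $x^\ast$. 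The extremal configuration should be an (almost-)blowup of a small hypergraph with three vertex classes whose relative sizes are governed by $x^\ast$.

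The main obstacle I anticipate is the ``only-if'' direction: verifying that the restricted three-parameter family of dual witnesses genuinely captures the worst case, so that no more elaborate multi-valued $\mu$ defeats the cubic bound. This requires a Lagrangian-style optimality argument showing that any extremal $\mu$ can be taken to have at most two distinct nonzero values and that the support of $\mu^{-1}(+)$ respects the three-class vertex partition, presumably via a smoothing or local averaging operation on $\mu$. Once this reduction is in place, the polynomial $p(x)$ emerges by direct substitution into the resulting KKT conditions, and Theorem \ref{thm:main_thesis} follows by composing the improved fractional statement with the transversal absorption of \cite{hyunwoo}.
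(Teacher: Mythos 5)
Your high-level decomposition is the same as the paper's: Theorem \ref{thm:main_thesis} is obtained by combining Lee's reduction (Theorem \ref{thm:hyunwoo_reduct}, $\theta_{\STS} = \max\{\theta_{\STS}^\ast, 3/4\}$) with an improved bound on the fractional threshold, and only the fractional part carries the new numerology. However, there are two genuine gaps. First, the fractional statement you propose to prove in step (a) is too weak to feed into step (b). Lee's iterative-absorption reduction does not need a fractional STS in hypergraphs where every pair has positive codegree; it needs one in hypergraphs $H$ where $\partial H$ is merely almost complete ($\delta(\partial H)\geq(1-\varepsilon)(n-1)$) and only the \emph{essential} minimum codegree $\delta_2^{\ess}(H)$ is controlled, because the cover-down step deletes all edges through already-covered pairs and thus produces pairs of codegree zero. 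Your LP, with an equality constraint for every pair in $\binom{V(H)}{2}$, is simply infeasible for such hypergraphs, so the absorption step cannot be ``quoted unchanged'' on top of your version of (a); you must work with $\deg^\psi(p)=1$ only for $p\in E(\partial H)$ throughout.

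Second, the derivation of the cubic $p(x)=8x^3-22x^2+10x-1$ is not carried out: every step from the dual witness $\mu$ to the polynomial is conditional (``should arise,'' ``presumably via a smoothing operation''), and you yourself identify the classification of extremal dual witnesses as an unresolved obstacle. The paper avoids duality entirely and instead exhibits an explicit primal solution: each pair's unit demand is delegated through ordered cliques (pair $\to$ triangle $\to$ $K_4^{(3)}$ $\to$ $K_5^{(3)}$) and realized by edge-gadgets $\psi_{K,p}$ supported on $5$-cliques; the degree conditions hold automatically, and the whole difficulty is non-negativity of the resulting weighting, which is reduced through a chain of nonlinear programs (P1)--(P5) to a one-variable problem whose optimum equals $1$ exactly at $d=x^\ast$. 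Incidentally, your justification of the ceiling $1/6$ is not right: it is not an edge-density count but the requirement $\delta_2^{\ess}(H)>5n/6$ needed so that every positive-codegree pair extends to a $K_5^{(3)}$, without which the gadget construction (and any $K_5$-based dual analysis) breaks down. As written, the proposal establishes the correct reduction skeleton but not the theorem.
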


The theorem is established by improving estimates on the so-called \hyperref[def:frac_STS_thresh]{\emph{fractional threshold}} for Steiner triple systems, a parameter on which Lee's proof heavily depends. Though Theorem \ref{thm:main_thesis} provides a better upper bound on the minimum codegree threshold for Steiner triple systems, Lee conjectures that the correct value is actually $3n/4 + \Theta(1)$: 

\begin{conjecture}[Lee 2023, {\cite[Conj.$\ $7.1]{hyunwoo}}]
	There is a constant $n_0\in \N$ and $C \in \R$ such that the following holds for all $n \geq n_0$ satisfying $n \equiv 1,3 \pmod{6}$: Let $\mH = \set{H_1, \dots, H_{n (n-1)/6}}$ be a family of $3$-uniform hypergraph on $n$ vertices sharing the same vertex set $V$. If the minimum codegree of $H_i$ is at least $3n/4 + C$ for all $i \in [n (n-1) / 6]$, then there exists a transversal Steiner triple system $S$. 
	\label{conj:hyunwoo1}
\end{conjecture}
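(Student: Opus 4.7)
The plan is to follow the same two-step template used by Lee and by our Theorem~\ref{thm:main_thesis}: prove a sharp bound on the fractional Steiner-triple-system threshold, then combine it with a transversal absorbing structure to upgrade fractional to integral. Concretely, one would aim to show that any $3$-graph $H$ on $n$ vertices with minimum codegree $\delta_2(H) \geq 3n/4 + C'$, for some absolute constant $C'$, admits a fractional STS. Once this is in place, feeding it into an appropriately strengthened version of Lee's transversal-absorption machinery (see \cite{hyunwoo}) should deliver a transversal STS whenever $\delta_2(H_i) \geq 3n/4 + C$ for each $i$, with $C$ a slightly larger constant absorbing the loss from rounding.

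The heart of the argument is the fractional threshold bound, and I would attack it via linear-programming duality. Write the fractional-STS LP: each edge $e$ of $H$ carries a weight $w_e \in [0,1]$ and $\sum_{e \ni \{u,v\}} w_e = 1$ for every pair $\{u,v\}$. If this LP is infeasible, duality yields pair weights $y_{\{u,v\}}$ with $\sum_{\{u,v\}\subset e} y_{\{u,v\}} \leq 1$ for every $e \in E(H)$ and $\sum y_{\{u,v\}} > \binom{n}{2}$. The plan is to convert such a dual certificate into structural information about $H$: typically, the extremal configurations at $3n/4$ arise from a bipartition $V = A \dot\cup B$ with $|A| \approx n/4$ plus an intersection-parity rule on the edges, which any dual certificate should essentially force on $H$. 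A stability-type step, quantifying how close $H$ must be to such a bipartition-plus-parity structure in order for the LP to fail, and then using $\delta_2(H) \geq 3n/4 + C'$ to rule that structure out, would close the loop.

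For the transversal step, I would construct a small \emph{rainbow absorber}: a partial transversal STS whose pair-coverage can be reshuffled so that any leftover pairs after the main fractional-to-integral rounding can be patched in. The absorber should be found probabilistically in an auxiliary bipartite structure whose vertex classes are edges of the $H_i$ on one side and color assignments $\varphi$ on the other, with the codegree condition used to guarantee enough flexibility. The remainder, after deleting the absorber, should still satisfy $\delta_2 \geq 3n/4 + C'$, so the fractional threshold applies; rounding then proceeds by a rainbow version of the nibble, combining the Rödl--Frankl--Rödl framework with recent rainbow-design-rounding techniques.

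The principal obstacle, by far, is the fractional threshold at exactly $3n/4$. Our Theorem~\ref{thm:main_thesis} stops at $1-x^\ast \approx 0.858$ precisely because the weighted-counting and greedy-edge-assignment estimates behind it are not tight enough to reach $3/4$; closing this gap seems to require a genuinely new analytic handle on the LP value, or an exact characterization of the extremal dual certificates, rather than refined calculation. A secondary but non-trivial obstacle is that standard absorption arguments typically lose an $o(n)$ term even given an optimal fractional threshold; producing an absorber that adds only a bounded additive constant (as the conjecture demands) would likely require explicit design-theoretic gadgets in place of random ones, and a very careful interplay with the transversal constraint to ensure that the bounded-size absorber can still correct defects across all $n(n-1)/6$ colors simultaneously.
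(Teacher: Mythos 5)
The statement you are addressing is not a theorem of this paper at all: it is Lee's Conjecture~7.1, restated here as Conjecture~\ref{conj:hyunwoo1} precisely because it is open. The paper proves only the weaker bound $\theta_{\STS} \leq 1 - x^\ast < 0.8579$ (Theorem~\ref{thm:main_thesis}) and explicitly remarks in Section~\ref{sec:conclusions} that the Delcourt--Postle machinery used here cannot even asymptotically reach $3/4$, since requiring every positive-codegree pair to lie in a $K_5^{(3)}$ already forces essential codegree above $5n/6$ with these methods. So there is no proof to compare yours against, and your submission is not a proof either: it is a research plan whose two load-bearing steps --- the fractional threshold at $3n/4 + O(1)$ and a constant-loss transversal absorption --- are exactly the open problems, as you yourself concede in the final paragraph. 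A proposal that defers the central difficulty to ``a genuinely new analytic handle'' does not establish the conjecture.

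Beyond that, the structural heuristic guiding your LP-duality step is misaligned with what is known. The extremal example at $3n/4$ (Lee's construction, \cite[Prop.~1.7]{hyunwoo}) is a \emph{parity} obstruction, and parity does not obstruct fractional solutions --- indeed the paper notes that this construction \emph{does} contain fractional Steiner triple systems, while the genuine fractional obstruction known is the tripartite space barrier of Proposition~\ref{prop:cons2}, which lives at codegree roughly $2n/3$; Lee in fact conjectures $\theta_{\STS}^\ast = 2/3$ (Conjecture~\ref{conj:frac_STS}). Hence a dual certificate for LP infeasibility at codegree $3n/4 + C'$ would not ``force a bipartition-plus-parity structure,'' and the stability step you envisage is aimed at the wrong extremal family. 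Finally, even granting an optimal fractional threshold, the only known bridge from fractional to transversal integral systems is Theorem~\ref{thm:hyunwoo_reduct}, which is inherently asymptotic ($\theta_{\STS} = \max\set{\theta_{\STS}^\ast, 3/4}$ concerns thresholds, i.e.\ losses of $\varepsilon n$); it cannot yield the additive-constant statement $3n/4 + C$ demanded by the conjecture, so the ``slightly larger constant absorbing the loss from rounding'' in your first paragraph has no justification in existing technology.
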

Lee also provides a construction in \cite[Prop.$\ $1.7]{hyunwoo} showing that this would be optimal, even when $H_1 = \dots = H_{n(n-1)/6}$. 

\begin{remark}
	Unlike with perfect hypergraph matchings (see for example \cite{han_person_schacht}), no such condition can be established purely using the minimum degree. Indeed, the $3$-uniform hypergraph $H$ where all possible edges are present except for the ones containing a fixed pair $p$ has minimum degree 
	\begin{equation*}
		\binom{v(H) - 1}{2} - (v(H)-2) = \paren{1 + o(1)} \binom{v(H)}{2},
	\end{equation*}
	but does not contain a Steiner triple system as there is no edge covering $p$. 
\end{remark}
\begin{remark}[Notation and terminology]
	Let $r \geq 2$ and $H$ be an $r$-uniform hypergraph. As a shorthand, we write $v(H) = \abs{V(H)}$ and $e(H) = \abs{E(H)}$. For $p \subseteq V(H)$, we denote by $\deg(p) = \abs{\set{e \in E(H) \colon p \subseteq e}}$ the \emph{degree of $p$}. Furthermore, we will write $\delta_1(H)= \min\set{\deg(v) \colon v \in V(H)}$ and $\delta_{r-1}(H) = \min\set{\deg(p) \colon p \in V(H)^{(r-1)}}$ for the \emph{minimum degree} and \emph{minimum codegree of $H$} respectively. If $r = 2$ and $H$ a graph, then we may write $\delta(H)$ instead of $\delta_1(H)$. For $p \in V(H)^{(r-1)}$, we denote by $N(p) = \set{v \in V(H)\colon p \cup \set{v} \in E(H)}$ the \emph{neighborhood of $p$}. Note that we generally have $\deg(p) = \abs{N(p)}$. As there may be $p \in V(H)^{(r-1)}$ with $N(p) = \emptyset$, we let
	\begin{equation*}
		\delta_{r-1}^{\ess}(H) = \min\set{\deg(p) \colon p \in V(H)^{(r-1)}, \deg(p) > 0}
	\end{equation*}
	be the \emph{essential minimum codegree of $H$}. To talk about the hypergraph formed by $(r-1)$-sets with positive codegree, we define $\partial H = \paren{V(H), \set{p \in V(H)^{(r-1)} \colon \deg(p) > 0}}$ to be the \emph{shadow of $H$}. Lastly, for $s \geq r$, we denote by $\mK_s (H)$ the collection of $s$-cliques in $H$, i.e. $$\mK_s (H)= \set{K \subseteq H\colon H \simeq K_s^{(r)}}.$$
\end{remark}
\subsection{Fractional Steiner triple systems}
\label{subsec:reduct}

Before going into the proof of Theorem \ref{thm:main_thesis}, it is necessary to discuss Lee's results in more detail, notably how he achieved Theorem \ref{thm:main_hyunwoo}. Essentially, Lee reduces the existence of (spanning) Steiner triple systems to its fractional relaxation, and then proves sufficient conditions for the existence of the latter. To make this precise, let us first formally define the following parameters:

\begin{definition}[$\theta_{\STS}$]
	Let $\theta_{\STS}$ be the infimum over all $\delta \in [0,1]$ for which there exists $n_0 \in \N$ such that every family $\mH= \set{H_1, \dots, H_{n(n-1)/6}}$ of $3$-uniform hypergraphs with vertex set $V$, $n = \abs{V}\geq n_0$ and $n \equiv 1,3 \pmod{6}$, with $\delta_2 (H_i) \geq \delta n$ for all $i \in [n(n-1)/6]$ has a transversal Steiner triple system. We will refer to $\theta_{\STS}$ as the \emph{minimum codegree threshold (for Steiner triple systems)}.  
	\label{def:STS_tresh}
\end{definition}

\begin{definition}[Weightings, perfect fractional Steiner triple systems]
	Let $H$ be a $3$-uniform hypergraph. We call $\psi \colon E(H) \longrightarrow \R$ a \emph{weighted subhypergraph of $H$} or simply a \emph{weighting}. Furthermore, for $p \subseteq V(H)$, we define 
	\begin{equation*}
		\deg^\psi(p) = \sum_{e \in E(H) \colon p \subseteq e} \psi(e)
	\end{equation*}
	and refer to it as the \emph{weight of $p$}. Furthermore, we call $\psi$ a \emph{perfect fractional Steiner triple system in $H$} if $\psi(E(H))\subseteq  [0,1]$ and every pair $p \in E(\partial H)$ satisfies $\smash{\deg^\psi(p) = 1}$. 
\end{definition}
Note that our definition of a weighted subhypergraph is more general than in \cite{hyunwoo}. If the host graph $H$ is obvious from the context, we will simply refer to $\psi$ as a fractional Steiner triple system.\footnote{Here, we deviate slightly from the naming convention established in \cite{hyunwoo}.} Apart from now being fractional, another difference is that $\psi$ only assigns pairs with positive codegree in $H$ degree one in $\psi$. Analogous to Steiner triple systems, we may also consider the minimum codegree threshold for fractional Steiner triple systems, which leads to the definition of $\theta_{\STS}^f$ and $\theta_{\STS}^\ast$. 
\begin{definition}[$\theta_{\STS}^f$, $\theta_{\STS}^\ast$, {\cite[Def. 1.5]{hyunwoo}}]
	We define the function $\theta_{\STS}^f\colon [0,1) \longrightarrow [0,1]$ as follows: Let $\theta_{\STS}^f(\varepsilon)$ be the infimum over all $\delta \in [0,1]$ for which there exists $n_0 \in \N$ such that for every $3$-uniform hypergraph $H$ on $n \geq n_0$ vertices with $\delta(\partial H) \geq (1- \varepsilon) (n-1)$ and $\delta_2^{\ess} (H) \geq \delta (n-2)$ contains a perfect fractional Steiner triple system. Furthermore, let $\theta_{\STS}^\ast = \lim_{\varepsilon \downTo 0} \theta_{\STS}^f(\varepsilon)$. We refer to $\theta_{\STS}^\ast$ as the \emph{fractional threshold}.
	\label{def:frac_STS_thresh}
\end{definition}
\begin{remark}
	While $\theta_{\STS}^f$ is obviously monotonically decreasing and thus $\theta_{\STS}^\ast \geq \theta_{\STS}^f(0)$, it is not clear whether equality holds.\footnote{We do conjecture it to be the case, see Conjecture \ref{conj:frac_STS}.} We also note that, in the definition of $\theta_{\STS}^f$, $\partial H$ not necessarily being complete is crucial for Lee's proof to go through. Indeed, iterative absorption is employed and, because we are deleting all edges of already covered pairs, the cover down step will always involve hypergraphs where some pairs have zero codegree. 
\end{remark}

As indicated above, the proof of Theorem \ref{thm:main_hyunwoo} then boils down into two steps.%: Reducing the problem of determining $\theta_{\STS}$ to $\theta_{\STS}^\ast$ and then estimating $\theta_{\STS}^\ast$.

\begin{theorem}[Lee 2023, {\cite[Simplification of Thm. 1.6]{hyunwoo}}]
	$\theta_{\STS} = \max\set{\theta_{\STS}^\ast, 3/4}$. 
	\label{thm:hyunwoo_reduct}
\end{theorem}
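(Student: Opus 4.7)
The plan is to prove both inequalities; the substantive direction is the upper bound, which encodes the reduction to the fractional problem via iterative absorption.

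For the lower bound $\theta_{\STS} \ge \max\{\theta_{\STS}^\ast, 3/4\}$, the inequality $\theta_{\STS} \ge 3/4$ is immediate from Lee's extremal construction in \cite[Prop. 1.7]{hyunwoo}: setting all $H_i$ equal to the bad hypergraph rules out any transversal STS. The inequality $\theta_{\STS} \ge \theta_{\STS}^\ast$ uses that any integer STS gives a perfect fractional STS via its edge-indicator; taking a witness hypergraph for the fractional threshold and setting $H_1 = \cdots = H_{n(n-1)/6} = H$ reduces the transversal problem to finding an ordinary STS of $H$, contradicting the absence of a fractional STS in the witness. Passing to the limit in the definition of $\theta_{\STS}^\ast$ then gives the claim.

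For the upper bound $\theta_{\STS} \le \max\{\theta_{\STS}^\ast, 3/4\}$, fix $\varepsilon > 0$ and set $\delta = \max\{\theta_{\STS}^\ast, 3/4\} + \varepsilon$. Given a family $\mathcal{H}$ with $\delta_2(H_i) \ge \delta n$, proceed by iterative absorption. First, randomly reserve a small absorbing sub-family $\mathcal{A} \subseteq \mathcal{H}$ of colors together with a small vertex set $V^* \subseteq V$ forming an \emph{absorber}: for any admissible small leftover set of pairs and colors, the reserved resources can be assembled into a transversal STS covering exactly those pairs. The construction of the absorber relies crucially on $\delta > 3/4$, which provides enough flexibility to find the local replacement gadgets that underlie it.

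Next, in the cover phase, iteratively find a perfect fractional STS in the remaining hypergraph using the definition of $\theta_{\STS}^\ast$ (applicable because the current $\delta_2^{\ess}$ still exceeds $\theta_{\STS}^\ast n$ by a margin), then round it to an integer transversal STS covering most pairs via a nibble/regularity argument. Each iteration removes the covered pairs from $\partial H$ and the corresponding colors from $\mathcal{H}$; the $\varepsilon$ slack in $\theta_{\STS}^f(\varepsilon)$ tolerates the resulting degradation of the shadow. Once the uncovered pairs form a sufficiently small leftover, the absorber reserved in the first step is invoked to complete the transversal STS.

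The main obstacle is the absorber construction in the transversal setting. Beyond the purely combinatorial absorbing capacity, one must ensure that color constraints can be satisfied simultaneously --- every admissible leftover must be completable using both the correct pairs and the correct colors --- which requires careful probabilistic reservations and concentration estimates to guarantee many viable completions for each potential leftover configuration.
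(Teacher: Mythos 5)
This theorem is not proved in the paper at all: it is imported verbatim from Lee's work (cited as a simplification of \cite[Thm.~1.6]{hyunwoo}) and used as a black box, so there is no in-paper proof to compare against. Judged on its own, your proposal is an outline of the correct known strategy for the upper bound --- iterative absorption in the style of the ``minimalist designs'' approach, which is indeed how Lee proceeds --- but it is only an outline. The three components that constitute the actual content of the proof (constructing an absorber that is simultaneously compatible with the pair constraints and the colour/transversal constraints; the cover-down lemma that converts a perfect fractional STS of the remaining hypergraph into an approximate integer transversal STS while controlling the degradation of $\partial H$; and the bookkeeping that keeps $\delta_2^{\ess}$ above the fractional threshold throughout the iteration) are all named but not carried out. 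You yourself flag the absorber as ``the main obstacle'' and leave it unresolved, so what you have written is a statement of strategy rather than a proof; each of these steps is a substantial lemma in \cite{hyunwoo}.

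There is also a concrete gap in your lower bound $\theta_{\STS} \ge \theta_{\STS}^\ast$. A witness for $\theta_{\STS}^f(\varepsilon) > \delta$ with $\varepsilon > 0$ is a hypergraph whose shadow $\partial H$ need not be complete; such an $H$ has $\delta_2(H) = 0$ and is therefore not admissible in Definition \ref{def:STS_tresh}, so setting $H_1 = \dots = H_{n(n-1)/6} = H$ tells you nothing about $\theta_{\STS}$. Your argument only applies to witnesses with complete shadow, i.e.\ it yields $\theta_{\STS} \ge \theta_{\STS}^f(0)$, and the paper explicitly remarks (after Definition \ref{def:frac_STS_thresh}) that $\theta_{\STS}^\ast \ge \theta_{\STS}^f(0)$ with equality not known. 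So ``passing to the limit in the definition of $\theta_{\STS}^\ast$'' does not close this direction; you would need either a separate argument handling incomplete shadows or the (open) identity $\theta_{\STS}^f(0) = \theta_{\STS}^\ast$.
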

\begin{theorem}[Lee 2023, {\cite[Thm. 1.8]{hyunwoo}}]
	$\theta_{\STS}^f(\varepsilon) \leq (3+ \sqrt{57}) / 12$ for all $\varepsilon \in [0,1)$. 
	\label{thm:hyunwoo_frac}
\end{theorem}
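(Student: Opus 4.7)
The plan is to prove Theorem~\ref{thm:hyunwoo_frac} by reducing the existence of a perfect fractional Steiner triple system to a feasibility linear program and analyzing the dual. Given $H$ on $n$ vertices with $\delta(\partial H) \geq (1-\varepsilon)(n-1)$ and $\delta_2^{\ess}(H) \geq \delta(n-2)$ for some $\delta > (3+\sqrt{57})/12$, one seeks $\psi \colon E(H) \to \R_{\geq 0}$ with $\deg^\psi(p) = 1$ for every $p \in E(\partial H)$. For $n$ large, the constraint $\deg(p) \geq \delta(n-2)$ forces any feasible $\psi$ to satisfy $\psi(e) = O(1/n)$, so the box constraint $\psi(e) \leq 1$ from Definition~\ref{def:frac_STS_thresh} is automatic and can be dropped. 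By Farkas' lemma, infeasibility of the reduced LP is equivalent to the existence of a dual witness $y \colon E(\partial H) \to \R$ satisfying
\begin{equation*}
	\sum_{\substack{p \subseteq e \\ p \in E(\partial H)}} y_p \leq 0 \quad \text{for all } e \in E(H), \qquad \sum_{p \in E(\partial H)} y_p > 0,
\end{equation*}
and the plan is to rule out such $y$.

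The starting point is a zeroth-order averaging: summing the dual constraints over $e$ yields $\sum_p y_p \deg(p) = \sum_e \sum_{p \subseteq e} y_p \leq 0$. Separating $y$ into positive and negative parts and using only $\delta(n-2) \leq \deg(p) \leq n-2$ forces $\delta < 1$, which is too weak. To sharpen, I would normalize so that $\max_p y_p = 1$, attained at some $p^\ast = \set{u, v}$, and invoke the triple inequality at $\set{u, v, w}$ for each $w \in N(p^\ast)$: this forces $y_{\set{u,w}} + y_{\set{v,w}} \leq -1$ for at least $\delta(n-2)$ choices of $w$, so a positive fraction of the pairs through $u$ or $v$ carry large negative $y$-value.

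Combining this pointwise bound with a second level of averaging---summing the triple inequalities over triples extending a fixed pair, and double-counting pairs that appear in both layers---should yield a quadratic inequality in the global positive and negative masses $P := \sum_{y_p > 0} y_p$ and $N := \sum_{y_p < 0}(-y_p)$, of the form $\alpha P^2 - \beta P N + \gamma N^2 \leq 0$. The coefficients $\alpha, \beta, \gamma$ depend on $\delta$ in such a way that the discriminant $\beta^2 - 4 \alpha \gamma$ vanishes precisely when $\delta = (3+\sqrt{57})/12$, the positive root of $6x^2 - 3x - 2 = 0$. For $\delta$ strictly above this threshold, this inequality combined with $P > N$ (which follows from $\sum_p y_p > 0$) forces $P = N = 0$, contradicting the assumption.

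The main obstacle is the second-moment accounting: matching the coefficients of the quadratic to the exact threshold $(3+\sqrt{57})/12$ requires careful tracking of how each pair's $y$-value is counted across the two layers of averaging, including the asymmetric role of positive and negative pairs induced by the one-sided codegree bound $\delta(n-2) \leq \deg(p) \leq n-2$. The hypothesis $\delta(\partial H) \geq (1-\varepsilon)(n-1)$ enters only to ensure that at most $O(\varepsilon n^2)$ pairs are excluded from $E(\partial H)$, whose total contribution to all sums is negligible once $\varepsilon$ is much smaller than $\delta - (3+\sqrt{57})/12$; this yields the claimed uniformity in $\varepsilon \in [0, 1)$.
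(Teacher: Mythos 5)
First, note that Theorem~\ref{thm:hyunwoo_frac} is a cited result (Lee's Theorem~1.8), proved in \cite{hyunwoo} by following Dross's method for fractional $K_3$-decompositions: the problem is reduced to a linear optimization / maximum-flow problem and the fractional Steiner triple system is built from the maximum flow. Your framework --- feasibility LP plus a Farkas certificate --- lives in the same LP-duality family (max-flow/min-cut is an instance of it), but you organize the argument purely on the dual side, which is a legitimate alternative in principle. One small point before the main issue: your justification for dropping the box constraint is wrong --- a large codegree does not force $\psi(e)=O(1/n)$ for \emph{every} feasible $\psi$, since nothing prevents a feasible point from concentrating weight. The conclusion is nevertheless true for a much simpler reason, already noted in Section~\ref{sec:gadgets_weighting}: if $\psi\geq 0$ and $\deg^\psi(p)=1$ for the pairs $p\subseteq e$, then $\psi(e)\leq \deg^\psi(p)=1$ automatically.

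The genuine gap is the entire second half of the argument. The zeroth-order averaging and the first-level deduction ($y_{\set{u,w}}+y_{\set{v,w}}\leq -1$ for all $w\in N(p^\ast)$) are fine but, as you concede, far too weak. Everything that would actually produce the threshold --- the ``second level of averaging,'' the double-counting across layers, the resulting quadratic $\alpha P^2-\beta PN+\gamma N^2\leq 0$, and the claim that its discriminant vanishes exactly at $\delta=(3+\sqrt{57})/12$ --- is asserted, not derived. The coefficients $\alpha,\beta,\gamma$ are never written down, so the identification of the threshold with the positive root of $6x^2-3x-2$ is reverse-engineered from the known answer rather than obtained from the argument; there is no evidence that the particular two-layer averaging you describe closes at $0.879\ldots$ rather than at some weaker value, and Dross-type proofs in the decomposition setting require a substantially more delicate cut analysis than two rounds of averaging. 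Even the final deduction is underspecified: whether ``$\alpha P^2-\beta PN+\gamma N^2\leq 0$ together with $P>N$ forces $P=N=0$'' depends entirely on the signs and magnitudes of the undetermined coefficients. As it stands, the proposal is a plausible strategy outline with the load-bearing computation missing, so it does not constitute a proof.
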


Hence, by Theorem \ref{thm:hyunwoo_reduct}, to show Theorem \ref{thm:main_thesis}, it suffices to improve the upper bound on $\theta_{\STS}^\ast$:
\begin{theorem}
	\label{thm:main_fractional}
	Let $x^\ast$ be the unique root of the polynomial $p(x)=8x^3 -22x^2 + 10x - 1$ in $[0,1/6]$. Then, 
	$\theta_{\STS}^f(\varepsilon) \leq 1 - x^\ast < 0.8579$ for any $\varepsilon \in [0, 1)$. In particular, $\theta_{\STS}^\ast \leq 1 - x^\ast < 0.8579$. 
\end{theorem}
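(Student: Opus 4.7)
The plan is to construct a perfect fractional Steiner triple system $\psi$ in $H$ explicitly, refining the weighting used by Lee in his proof of Theorem~\ref{thm:hyunwoo_frac}. Because $\psi(e)$ will be of order $O(1/n)$ for every triple in our construction, the constraint $\psi(e) \leq 1$ from the definition of a perfect fractional Steiner triple system will be automatic, and we only need to guarantee $\psi \geq 0$ together with $\deg^{\psi}(p) = 1$ for each $p \in E(\partial H)$.

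I would proceed via an iterative defect-correction scheme. First, choose a simple initial weighting $\psi_0$, for instance $\psi_0(e) = \alpha$ uniformly for an appropriate $\alpha = \Theta(1/n)$, or a closely related ansatz weighted by the local codegrees around $e$. Measure the initial defect $\Delta_0(p) := 1 - \deg^{\psi_0}(p)$; the codegree hypothesis $\delta_2^{\ess}(H) \geq (1 - x^\ast + \eta)(n-2)$ will control $\|\Delta_0\|_\infty$. Next, construct a signed correction $\psi_1$ on $E(H)$ with $\deg^{\psi_1}(p) = \Delta_0(p)$ for every $p \in E(\partial H)$, by redistributing weight locally among triples through each pair. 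The correction $\psi_1$ inevitably introduces new defects $\Delta_1$, which a further signed correction $\psi_2$ will absorb, and so on. The final weighting is $\psi = \psi_0 + \psi_1 + \psi_2 + \cdots$, truncated after finitely many steps and the remaining residue absorbed using the density hypothesis $\delta(\partial H) \geq (1-\varepsilon)(n-1)$.

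The analytic core is that the defect norms $\|\Delta_i\|_\infty$ decay geometrically, with a rate determined by the codegree density. Lee's bound $(3+\sqrt{57})/12$ corresponds to carrying out this analysis at two levels and solving a quadratic; the improved bound $1 - x^\ast$ in Theorem~\ref{thm:main_fractional} arises from one additional correction layer, yielding the cubic $p(x) = 8x^3 - 22x^2 + 10x - 1$. The root $x^\ast \in [0, 1/6]$ is precisely the threshold at which this three-level correction system can still be absorbed while keeping $\psi$ pointwise non-negative.

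The main obstacle is exactly this preservation of non-negativity throughout the iteration. Each signed correction $\psi_i$ has mean zero on every pair, but its pointwise values on individual triples must remain dominated by the positive initial weighting $\psi_0$ together with the previous corrections. Balancing these fluctuations carefully across three levels, rather than the two analyzed in \cite{hyunwoo}, is the key technical point, and it is exactly where the cubic constraint $p(x^\ast) = 0$ becomes tight.
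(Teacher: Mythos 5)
Your proposal does not work as stated, and it diverges from the paper's actual argument in ways that expose real gaps rather than offering an alternative route. The first problem is internal: you posit corrections $\psi_i$ satisfying $\deg^{\psi_i}(p) = \Delta_{i-1}(p)$ \emph{exactly} for every $p \in E(\partial H)$, but then claim that $\psi_1$ ``inevitably introduces new defects $\Delta_1$.'' If the correction is exact on every pair, then $\psi_0 + \psi_1$ already has $\deg(p)=1$ everywhere and there is nothing left to iterate; if it is only approximate, then truncating after finitely many steps leaves a nonzero residue, and a perfect fractional Steiner triple system requires $\deg^{\psi}(p)=1$ exactly, not up to an error to be ``absorbed.'' (The hypothesis $\delta(\partial H) \geq (1-\varepsilon)(n-1)$ cannot rescue this: as the paper's concluding remarks note, that condition is not used in the proof at all, and it provides no mechanism for killing a residual defect.) The actual structure of the problem is the opposite of what you describe: satisfying the pair condition exactly is the \emph{easy} part, achieved in one shot via edge-gadgets $\psi_{K,p}$ supported on copies of $K_5^{(3)}$, which act as indicators of a single pair (this already requires showing every pair with positive codegree lies in a $K_5^{(3)}$, needing $\delta_2^{\ess} > 5n/6$ --- a step absent from your sketch). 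The entire difficulty, and the entire content of the bound $1-x^\ast$, is proving \emph{non-negativity} of the resulting weighting, which you acknowledge only as ``the main obstacle'' without any concrete handle on it.

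The second gap is the provenance of the cubic. Your narrative --- Lee's $(3+\sqrt{57})/12$ comes from a two-level correction and a quadratic, and $x^\ast$ from adding a third level --- is not accurate: Lee's bound comes from Dross's flow/LP method, not from an iteration of the kind you describe, and the cubic $p(x)=8x^3-22x^2+10x-1$ in the paper arises quite differently. There, following Delcourt--Postle, the demand of each pair is distributed over \emph{all} $K_5^{(3)}$'s containing it via a cascading uniform delegation (pair $\to$ triples $\to$ $K_4^{(3)}$'s $\to$ $K_5^{(3)}$'s), the resulting edge weight is expressed in terms of common co-neighborhood densities, and a multi-stage nonlinear optimization (reduced from many variables to $8$, then $2$, then $1$) determines the largest codegree deficiency $d$ for which the weight stays non-negative; $x^\ast$ is the root of the polynomial emerging from that final one-variable optimization. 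Without specifying how your corrections are built from local structures, why their pointwise fluctuations are dominated by $\psi_0$, and why the threshold is governed by this particular cubic, the proposal is a heuristic outline rather than a proof.
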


Our proof will closely follow Delcourt and Postle's approach in \cite{delcourt_postle_fractional}, in which they established the best known upper bound on the minimum degree threshold for a fractional $K_3$-decomposition. As such, our proof will substantially differ from Lee's proof of Theorem \ref{thm:hyunwoo_reduct}, where he follows Dross' approach in \cite{dross_fractional}. We discuss this connection to (fractional) $K_3$-decompositions more deeply in Subsection \ref{subsec:related}.

\subsection{Related work}
\label{subsec:related}

Instead of thinking of them as hypergraphs, one can alternatively define Steiner triple systems as partitions of $K_n$ into triangles. These partitions are also called $K_3$-decompositions. Hence, naturally, the problem has some resemblance to the \hyperref[conj:nash-williams]{\emph{Nash-Williams conjecture}} on $K_3$-decompositions: We call a graph $G$ \emph{$K_3$-divisible} if $\abs{E(G)}$ is divisible by three and every vertex has even degree. While it is obvious that being $K_3$-divisible is necessary for a graph $G$ to have a $K_3$-decomposition, the question is again when these conditions are sufficient. 

\begin{conjecture}[Nash-Williams 1970, {\cite{nash_williams}}]
	For all sufficiently large $n \in \N$ the following holds: If $G$ is $K_3$-divisible and has minimum degree at least $3n/4$, then $G$ is $K_3$-decomposable. 
	\label{conj:nash-williams}
\end{conjecture}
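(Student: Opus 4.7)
The plan is to adapt the iterative booster framework of Delcourt--Postle~\cite{delcourt_postle_fractional} for fractional $K_3$-decompositions to the Steiner triple system setting. Since both problems amount to weighting a family of ``triples'' so that every pair is covered with total weight $1$, the technique transfers naturally, though the gadgets and the resulting extremal polynomial change.

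I would begin with a baseline weighting $\psi_0 \colon E(H) \to \R$ that is approximately uniform, scaled so that the average pair-weight equals $1$. The codegree hypothesis $\delta_2^{\ess}(H) \geq (1-x^\ast)(n-2)$ controls how much $\deg^{\psi_0}(p)$ can deviate from $1$ across $p \in E(\partial H)$, giving a discrepancy vector $\eta_0$ whose sup-norm is at most some explicit function of $x^\ast$.

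The heart of the argument is the construction of local ``boosters'' --- small linear combinations of local perturbations --- that act on discrepancy vectors as a contracting linear operator $T$. The simplest building block is a cherry at a target pair $p$: adding $\lambda$ to one hyperedge through $p$ and subtracting $\lambda$ from another preserves $\deg^\psi(p)$ but redistributes weight among the four neighbouring pairs. Genuinely shifting $\deg^\psi(p)$ without disturbing other pairs requires combining cherries into chains of length one, two and three in the auxiliary link structure of $p$. Under the codegree hypothesis one can show that enough such chains exist at every pair, and a careful linear combination produces $\|T\|_\infty < 1$ exactly when $p(x) = 8x^3 - 22x^2 + 10x - 1 < 0$, evaluated at $x = 1 - \delta_2^{\ess}(H)/(n-2)$. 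Iterating $T$ yields a sequence $(\psi_k)_{k \geq 0}$ with geometrically decaying discrepancy, whose limit $\psi^\ast$ is a perfect fractional Steiner triple system on $\partial H$. The bound $\psi^\ast(e) \in [0,1]$ for every $e$ follows from a geometric-sum estimate showing the cumulative adjustment to any single edge weight is $o(1)$ relative to $\psi_0(e)$.

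The main obstacle is the precise combinatorial derivation that produces the cubic polynomial $p$. One must identify the correct booster configuration, establish that the codegree hypothesis guarantees its existence at every pair, and carry out the double-counting at three nested ``distance'' scales (neighbours, second neighbours, and third neighbours of $p$ in the link graph), which together produce the coefficients $(8, -22, 10, -1)$. The use of the essential minimum codegree and of an incomplete shadow $\partial H$ --- both highlighted in the remark following Definition~\ref{def:frac_STS_thresh} --- is crucial here, since pairs of codegree zero admit no booster at all and must be excluded from the correction process; a final verification will confirm that such exclusions do not interfere with the contraction, so that the same threshold $1 - x^\ast$ bounds $\theta_{\STS}^f(\varepsilon)$ for every $\varepsilon \in [0,1)$ uniformly.
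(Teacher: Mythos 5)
The statement you were asked to prove is the Nash--Williams conjecture (Conjecture \ref{conj:nash-williams}): for large $n$, every $K_3$-divisible graph with minimum degree at least $3n/4$ is $K_3$-decomposable. This is a long-standing open problem; the paper does not prove it and does not claim to --- it is quoted only to motivate the analogy between $K_3$-decompositions and Steiner triple systems. Your proposal does not engage with this statement at all: what you sketch is an argument for the paper's fractional hypergraph result (Theorem \ref{thm:main_fractional}), namely an upper bound of $1-x^\ast\approx 0.8579$ on $\theta_{\STS}^f(\varepsilon)$ for $3$-uniform hypergraphs, with $8x^3-22x^2+10x-1$ as the extremal polynomial. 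Even taken on its own terms this says nothing about $K_3$-divisible graphs, produces a threshold far above $3/4$, and is fractional rather than integral: a fractional result does not by itself give an exact decomposition --- one needs the divisibility hypothesis and an iterative-absorption reduction on top, as in \cite{minimalist_designs}. Moreover, the purely fractional route cannot reach the exact constant in any case, since the minimum degree threshold for fractional $K_3$-decompositions is itself at least $3n/4+o(n)$ (see the remark citing \cite{yuster_fractional_decompositions}), so any loss in the fractional step is fatal for the sharp $3n/4$ statement.

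As a secondary point, even read as a sketch of Theorem \ref{thm:main_fractional}, your mechanism is not the paper's and is incomplete at its crucial step. The paper does not iterate a contracting ``booster'' operator on a discrepancy vector; it writes down a single explicit weighting $w_H$ built from edge-gadgets $\psi_{K,p}$ supported on copies of $K_5^{(3)}$, distributing each pair's unit demand through ordered cliques via the weights $W(\cdot)$, so that the pair condition holds identically (Proposition \ref{prop:Wh_one}); the entire difficulty is non-negativity of $w_H$, which is resolved by reducing a nonlinear program (P1) through (P5) to one variable, where the cubic appears. In your outline the decisive claim --- that the operator norm of $T$ is below $1$ exactly when $p(x)<0$, and that edges never go negative along the iteration --- is asserted but not derived, so the proposal would not establish even the fractional theorem, let alone the conjecture in the statement.
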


Phrased in terms of hypergraphs, both problems can be seen to be of the following type: Given a $3$-uniform hypergraph $H$\footnote{Where the hypergraph $H$ in the Nash-Williams setting is given by interpreting the triangles as triples.} satisfying some properties, does there exist a linear\footnote{Meaning that two edges share at most one vertex.} hypergraph $G \subseteq H$ such that $\partial G = \partial H$?

Seeing this connection, Lee successfully transferred techniques used in works concerning Conjecture \ref{conj:nash-williams} to his setting. Notably, in the proof of Theorem \ref{thm:hyunwoo_reduct}, Lee roughly follows the same approach as in \cite[Thm. 1.2]{minimalist_designs}, in which they show that, asymptotically, it suffices to consider the minimum degree threshold for fractional $K_3$-decompositions for Conjecture \ref{conj:nash-williams}. Furthermore, to get the upper bound on $\theta_{\STS}^\ast$ in Theorem \ref{thm:hyunwoo_frac}, Lee uses Dross' approach from \cite{dross_fractional}, which established a sufficient minimum degree condition for fractional $K_3$-decompositions. There, the problem is essentially reduced to a linear optimization program / maximum flow problem with the fractional Steiner triple system constructed using the maximum flow. This was the best known result for a long time till Dukes and Horsley in \cite{dukes_horsley_fractional} and Delcourt and Postle in \cite{delcourt_postle_fractional} independently improved upon Dross' result. %While Dukes and Horsley do a more careful analysis of Dross' technique, Delcourt and Postle's proof differs already at a conceptual level.

We also make use of this parallel by following Delcourt and Postle's approach in \cite{delcourt_postle_fractional}.
\subsection{Organization}
The rest of the paper is organized as follows: In Section \ref{sec:gadgets_weighting}, we introduce a weighting $w_H \colon E(H) \longrightarrow \R$ that satisfies all the properties of a fractional Steiner triple system with respect to the given $3$-uniform hypergraph $H$, except possibly non-negativity. Hence, it suffices to analyze under what conditions $w_H$ is non-negative. In Section \ref{sec:reformulation}, we transform this problem to make the subsequent optimization steps easier. In Section \ref{sec:optimization}, we formally state the non-linear optimization problem at hand and reduce the number of variables till we can finally solve it, thus obtaining Theorem \ref{thm:main_fractional}. We end the paper with a few concluding remarks, among them a concrete conjecture for $\theta_{\STS}^\ast$.

\section{Edge-gadgets and the weighting}
\label{sec:gadgets_weighting}
As mentioned, to prove Theorem \ref{thm:main_fractional}, we closely follow Delcourt and Postle's approach in \cite{delcourt_postle_fractional}. Our discussion of their method will be more expository. Still, for the convenience of the reader, we also repeat proofs that otherwise are analogous to the ones in \cite{delcourt_postle_fractional}.

\subsection{Edge-gadgets}

One of the fundamental ingredients is the usage of the so-called \emph{edge-gadgets} which were first introduced in \cite{barber_fractional_clique}. We naturally modify the definition for our setting. 
\begin{definition}[$\psi_{K,p}$]
	\label{def:gadget}
	Let $H$ be a $3$-uniform hypergraph. For $K \in \mK_5(H)$ and $p \in E(\partial K)$, let $E_j (K, e) = \set{e \in E(K) \colon \abs{e \cap p} = j}$ for all $j \in \set{0,1,2}$. The \emph{edge-gadget of $p$ in $K$} is the function 
	\begin{align*}
		\psi_{K, p} \colon E(H) &\longrightarrow \R \\	
		e &\longmapsto \begin{cases}
			+ \frac{1}{3}, & e \in E_0(K, p)\\[5pt] 
			- \frac{1}{6}, & e \in E_1(K, p)\\[5pt]  
			+ \frac{1}{3}, & e \in E_2(K, p)\\[5pt] 
			0, & \text{otherwise.}
		\end{cases}
	\end{align*}
	\comment{
	\begin{align*}
		\psi_{K, p} \colon E(H) &\mathmakebox[\widthof{${}\longrightarrow{}$}][c]{\longrightarrow} \R \\	
		\psi_{K,p} (e) &\mathmakebox[\widthof{${}\longrightarrow{}$}][c]{=} \begin{cases}
			+ \frac{1}{3}, & e \in E_0(K, p)\\[5pt] 
			- \frac{1}{6}, & e \in E_1(K, p)\\[5pt]  
			+ \frac{1}{3}, & e \in E_2(K, p)\\[5pt] 
			0, & \text{otherwise.}
		\end{cases}
	\end{align*}
	}
\end{definition}

What makes these gadgets so useful is that they allow us to alter the weight of the pair $p$ without changing the weight of other pairs. Indeed, it turns out that $\psi_{K,p}$ acts like an indicator of $p$, meaning that it assigns weight one to $p$ and zero to all the other pairs. However, this comes at the cost of introducing negative weights for the edges. 
\begin{proposition}
	\label{prop:gadget_delta}
	Let $K \in \mK_5(H)$ and $p \in E(\partial K)$. Then we have for all $q \in E(\partial  H)$
	\begin{align*}
		\deg^{\psi_{K,p}}(q) = \indicate{p = q}. 
	\end{align*}
\end{proposition}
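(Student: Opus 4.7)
The plan is to verify Proposition~\ref{prop:gadget_delta} by a direct case analysis on the pair $q$. The crucial observation is that $\psi_{K,p}$ vanishes outside $E(K)$, so only edges of $K$ containing $q$ can possibly contribute to $\deg^{\psi_{K,p}}(q)$. In particular, if $q \not\subseteq V(K)$, then no edge of $K$ contains $q$ and the sum is trivially zero. Hence we may assume $q \subseteq V(K)$ throughout.

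Next, I would record the sizes of the three edge-classes. Since $K$ is a complete $3$-uniform hypergraph on $5$ vertices and $p$ is a pair, counting $3$-subsets of $V(K)$ by their intersection with $p$ gives $|E_0(K,p)| = \binom{3}{3} = 1$, $|E_1(K,p)| = 2 \cdot \binom{3}{2} = 6$, and $|E_2(K,p)| = \binom{3}{1} = 3$. For $q = p$, only the three edges in $E_2(K,p)$ contain $q$, each weighted $+\tfrac{1}{3}$, giving $\deg^{\psi_{K,p}}(p) = 3 \cdot \tfrac{1}{3} = 1$, as required.

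It remains to handle $q \subseteq V(K)$ with $q \neq p$, where I would split by $|q \cap p| \in \{0,1\}$. If $|q \cap p| = 0$, then $q$ lies inside the $3$-set $V(K) \setminus p$; the edges of $K$ containing $q$ are the unique triple in $E_0(K,p)$ together with two triples obtained by adding one vertex of $p$, which lie in $E_1(K,p)$. The total weight is $\tfrac{1}{3} + 2 \cdot (-\tfrac{1}{6}) = 0$. If $|q \cap p| = 1$, say $q = \{u, w\}$ with $u \in p$ and $w \in V(K) \setminus p$, then the three edges of $K$ containing $q$ are: the triple $\{u, w\} \cup (p \setminus \{u\})$, which lies in $E_2(K,p)$, and the two triples obtained by adding either of the remaining two vertices of $V(K) \setminus p$, which lie in $E_1(K,p)$. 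Again the total weight is $\tfrac{1}{3} + 2 \cdot (-\tfrac{1}{6}) = 0$.

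There is no real obstacle here; the proposition reduces to finite combinatorial bookkeeping on the ten edges of a $3$-uniform $K_5$, and the weights $\tfrac{1}{3}, -\tfrac{1}{6}, \tfrac{1}{3}$ are precisely chosen so that the telescoping in the two nontrivial subcases cancels out.
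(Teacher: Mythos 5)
Your proof is correct and follows essentially the same route as the paper: after discarding pairs $q$ not contained in $V(K)$, you case-split on $|q\cap p|\in\{0,1,2\}$ and count the edges of $K$ through $q$ in each class $E_0,E_1,E_2$, obtaining $\tfrac13+2\cdot(-\tfrac16)=0$ in the two off-diagonal cases and $3\cdot\tfrac13=1$ when $q=p$. The counts and weights all check out, so nothing further is needed.
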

\begin{proof}
	Clearly, $\smash{\deg^{\psi_{K,p}}(q) = 0}$ for all $q \in E(\partial  H) \setminus E(\partial K)$. So, consider $q \in E(\partial K)$. 
	
	If $\abs{p \cap q} = 0$, then there is exactly one edge in $E_0(K, p)$ and exactly two edges in $E_1(K, p)$ containing $q$, hence 
	\begin{equation*}
		\deg^{\psi_{K,p}}(q) = \frac{1}{3} - \frac{2}{6} = 0.
	\end{equation*}
	
	If $\abs{p \cap q} = 1$, then there are exactly two edges in $E_1(K,p)$ and exactly one edge in $E_2,(K, p)$ containing $q$, hence 
	\begin{equation*}
		\deg^{\psi_{K,p}}(q) = \frac{-2}{6} + \frac{1}{3} = 0.
	\end{equation*}
	
	Lastly, if $\abs{p \cap q} = 2$, i.e. $p = q$, then there are three edges in $E_2(K,p)$ containing $p$, so
	\begin{equation*}
		\deg^{\psi_{K,p}}(q) = \frac{3}{3} = 1. \qedhere
	\end{equation*} 
\end{proof}	

From Proposition \ref{prop:gadget_delta}, we can immediately construct weightings on the edges that satisfy the pair condition if we have no restrictions on our weights.
\begin{corollary}
	\label{cor:pseudo-sts}
	Let $H$ be a $3$-uniform hypergraph such that every pair $p \in E\of{\partial  H}$ is contained in at least one $5$-clique. Then there exists $w \colon E(H) \longrightarrow \R$ such that 
	\begin{equation*}
		\deg^w(p) = 1
	\end{equation*}
	for all pairs $p\in V\of{\partial  H}$.
\end{corollary}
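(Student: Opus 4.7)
The plan is to derive the corollary as a direct consequence of Proposition~\ref{prop:gadget_delta} by superimposing one edge-gadget per pair in $E(\partial H)$. Because $\deg^{\psi_{K,p}}(q)$ is the Kronecker delta $\indicate{p=q}$ on $E(\partial H)$, summing these gadgets over all pairs will automatically produce a weighting that assigns weight exactly one to every pair in the shadow.

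Concretely, I would first use the hypothesis to choose, for each $p \in E(\partial H)$, some $5$-clique $K_p \in \mathcal{K}_5(H)$ containing $p$ (its existence is guaranteed by assumption, and any choice suffices). I would then define
\begin{equation*}
	w \;=\; \sum_{p \in E(\partial H)} \psi_{K_p, p} \colon E(H) \longrightarrow \R,
\end{equation*}
which is well-defined since $E(\partial H)$ is finite and each $\psi_{K_p,p}$ is a real-valued function on $E(H)$.

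Next, I would verify the pair-degree condition using linearity of the weighted-degree operator $\deg^{(\cdot)}(q)$ in the weighting. For any $q \in E(\partial H)$,
\begin{equation*}
	\deg^{w}(q) \;=\; \sum_{p \in E(\partial H)} \deg^{\psi_{K_p, p}}(q) \;=\; \sum_{p \in E(\partial H)} \indicate{p = q} \;=\; 1,
\end{equation*}
where the middle equality is exactly Proposition~\ref{prop:gadget_delta} applied to each summand. This completes the argument.

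There is no real obstacle: the entire content of the corollary is the observation that Proposition~\ref{prop:gadget_delta} says each $\psi_{K_p,p}$ behaves like an indicator of $p$ on the shadow, so linearity immediately yields a weighting with constant pair-weight $1$. The only thing to note is that $w$ may take negative values on individual edges (since the gadgets themselves do), which is why the corollary only asserts the pair-degree property and not non-negativity; this is consistent with the paper's subsequent strategy of analyzing when such a weighting can be made non-negative.
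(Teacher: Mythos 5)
Your proposal is correct and is essentially identical to the paper's proof: both choose a $5$-clique $K_p$ for each pair $p$ and define $w = \sum_{p} \psi_{K_p,p}$, with the pair-degree condition following from Proposition~\ref{prop:gadget_delta} by linearity. You simply spell out the linearity computation that the paper leaves implicit.
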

\begin{proof}
	For every pair $p$, let $K_p$ be a $5$-clique containing $p$. Then, by Proposition \ref{prop:gadget_delta}, 
	\begin{equation*}
		w = \sum_{p \in V(\partial  H)} \psi_{K_p, p} 
	\end{equation*}
	is as desired. 
\end{proof}

\subsection{The weighting}

Before we begin to define our weighting, let us first give a couple more definitions. 
\begin{definition}[{$\mK_r (H, F)$, $\mK_r(H, S)$}]
	For a subhypergraph $F \subseteq H$, $S \subseteq V(H)$ and $r \in \set{3,4,5}$, let 
	\begin{align*}
		\mK_r(H, F) &= \set{K \in \mK_r(H) \colon F \subseteq K}, &
		\mK_r(H, S) &= \mK_r(H, H[S]).
	\end{align*} 
\end{definition}
\begin{definition}[$CN$]
	Given $P \subseteq V(H)^{(2)}$, let the \emph{common co-neighborhood of $P$} equal
	\begin{equation*}
		CN(P) = \bigcap_{p \in P} N(p).
	\end{equation*}
	Furthermore, for $S \subseteq V(H)$ we define $	CN(S) = CN\left(S^{(2)}\right)$.
\end{definition}

For the discussion below, assume for now that $\delta_2^{\ess}(H) > 5n/6$ with $n = v(H) \geq 5$. It is then evident that $$\mK_5(H, p) \neq \emptyset$$ for every pair $p \in E(\partial  H)$. Indeed, by definition, there must be an edge $e \in E(H)$ witnessing $p \in E(\partial  H)$; $e$ can be extended to some tetrahedron $K$ since
\begin{equation*}
	\abs{CN(e)} \geq 3\delta_2^{\ess}(H) - 3 n > 0.
\end{equation*} 
Similarly, $K$ can be extended to a $K_5^{(3)}$ since
\begin{equation*}
	\abs{CN(K)} \geq 5\delta_2^{\ess}(H) - 5n > 6 \cdot \frac{5n}{6} - 5n = 0.
\end{equation*} 

As seen in Corollary \ref{cor:pseudo-sts}, the only constraint not immediately satisfied in our usage of edge-gadgets is non-negativity. Indeed, if every edge has a non-negative weight, then the condition on the pairs already implies that the weight of each edge is at most one, making it a fractional Steiner triple system. For this, we want a more general approach than the one in the proof of Corollary \ref{cor:pseudo-sts}. Namely, to be more flexible, instead of relying on a single edge-gadget per pair, it seems advantageous to distribute the \emph{demand} of the pair over \emph{multiple} edge-gadgets of $p$. Hence, one natural approach would be to make the ansatz
\begin{equation*}
	w = \sum_{p \in E \of{\partial  H}} \sum_{K \in \mK_5(H, p)} \lambda_{K, p} \cdot \psi_{K,p} \tag{$\lambda_{K, p} \in \R$}
\end{equation*}
for our fractional Steiner triple system. One obvious constraint on the scalars is
\begin{equation*}
	\sum_{K \in \mK_5(H, q)} \lambda_{K, q} = 1 \tag{$\star$} \label{eq:cond}
\end{equation*}
for every pair $q \in E(\partial  H)$. Indeed, using Proposition \ref{prop:gadget_delta}, we see that 
\begin{align*}
	\deg^w(q) ={}& \sum_{e \in E(H) \colon q \subseteq e} 	\paren{\sum_{p \in E \of{\partial  H}} \sum_{K \in \mK_5(H, p)} \lambda_{K, p} \cdot \psi_{K,p}(e)} \\
	={}& \sum_{p \in E \of{\partial  H}} \sum_{K \in \mK_5(H, p)} \lambda_{K, p} \cdot \paren{\sum_{e \in E(H) \colon q \subseteq e} \psi_{K, p}(e) } \\
	={}& \sum_{p \in E \of{\partial  H}} \sum_{K \in \mK_5(H, p)} \lambda_{K, p} \cdot \indicate{p = q} \\ 
	={}&  \sum_{K \in \mK_5(H, q)} \lambda_{K, q}.
\end{align*}
From this calculation, it is also clear that this condition is sufficient for every pair to get weight one.

\begin{figure}[htb!]
	\centering
	\includegraphics{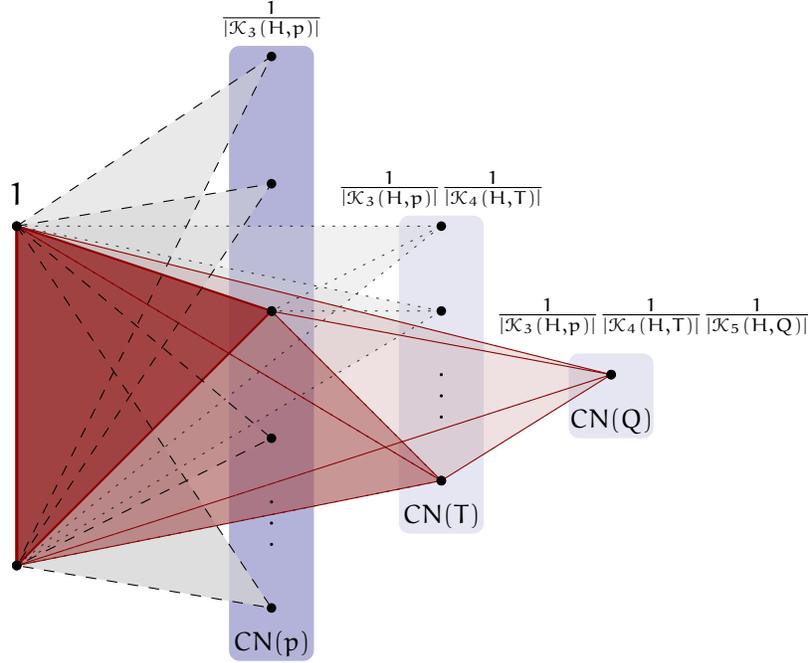}
	\caption{Sketch for how the demand of $p$ is distributed among the $K_5^{(3)}$'s}
\end{figure}
For the scalars, we introduce a non-uniform distribution that utilizes the structure of $H$. Namely, we imagine how at first every pair with positive codegree holds its demand of $1$. Then, each pair \emph{distributes} that demand uniformly among the triples containing that pair. Those triples in turn distribute the demand uniformly among all $\smash{K_4^{(3)}}$'s containing them, until every $\smash{K_5^{(3)}}$ got from every pair a certain fraction of the pair's demand. This fraction will then serve as $\lambda_{K, p}$. 

Note that a $K_4^{(3)}$ containing a pair $p$ will get a fraction of $p$'s demand through two distinct triples, where the fraction received from each triple may be different. Hence, to make this distribution\footnote{Or delegation, as Delcourt and Postle call it.} of the demand formal, it seems natural to introduce \emph{ordered} variants of the previous definitions. 

\begin{definition}[$\mO\mK_r(H)$]
	Let $H$ be a $3$-uniform hypergraph. For $r \in \set{2, 3,4,5}$, an \emph{ordered $r$-clique} of $G$ is an $r$-tuple $K = (v_1, \dots, v_r) \in V(H)^r$ such that $H[\set{v_1, \dots, v_r}] \in \mK_r(H)$.
	
	The vertex set of the ordered clique is $V(K) = \set{v_1, \dots, v_r}$ and the \enquote{$\subseteq$}-relation will be extended in a straightforward way:
	\begin{itemize}
		\item If $K, K'$ are ordered cliques, then $K' \subseteq K$ if $K'$ is a subsequence of $K$. 
		\item If $F$ is a subhypergraph and $K$ an ordered clique, then $F \subseteq K$ holds if $F \subseteq H[V(K)]$.
	\end{itemize}
	The set of ordered $r$-cliques in $H$ is denoted by $\mO\mK_r(H)$. Furthermore, for an ordered $r$-clique $K$ and $s \geq r$, let $\mO\mK_s(H, K)$ denote the set of ordered $s$-cliques containing $K$ in the sense of the \enquote{$\subseteq$}-relation and let $\mO\mK_s(H, S)$ denote the set of ordered $s$-cliques whose vertex set contains $S \subseteq V(H)$. 
\end{definition}

The weight for an ordered clique is now defined as follows: 
\begin{definition}[$W(K)$]
	Let $H$ be a $3$-uniform hypergraph and let $r \in \set{2,3,4}$. For every $K = (v_1, \dots, v_r) \in \mO\mK_r(H)$, we define the weight of $K$ to be 
	\begin{equation*}
		W(K) = \prodOver{i}{2}{r} \frac{1}{\abs{\mK_{i+1} (H, \set{v_1, \dots, v_i})}}.
	\end{equation*} 
	For the sake of clarity, we may also write $W(v_1, \dots, v_r)$ for $W(K)$. 
\end{definition}
For the sake of completeness, we will also define edge-gadgets for ordered cliques. Instead of having the pair explicitly given, the pair for whom the edge-gadget covers the demand is implicitly given by the ordered clique. 
\begin{definition}[$\psi_K$]
	Let $H$ be a $3$-uniform hypergraph. For $K = (v_1, \dots, v_5) \in \mO \mK_5(H)$ and $T \in \mK_3(H)$, we define 
	\begin{equation*}
		\psi_K(T) = \psi_{H[V(K)], v_1v_2} (T).
	\end{equation*} 
	Similarly, we define $\psi_K(O) = \psi_K (H [V(O)])$ for $O \in \mO\mK_3(H)$. 
\end{definition}
While this formalism using ordered cliques gives us a clean way to formally express the distribution of the demand of each pair $p = v_1v_2\in E(\partial H)$, the distribution is now starting from every ordered pair with $(v_1,v_2)$ being distinct from $(v_2,v_1)$. Hence, we define our weighting as follows:
\begin{definition}[$w_H$]
	Let $H$ be a $3$-uniform hypergraph. Define
	\begin{align*}
		w_H \colon E(H) &\longrightarrow \R \\ 
		e &\longmapsto \frac{1}{2} \sum_{K = (v_1,\dots,v_5) \in \mO\mK_5(H, e)} W(v_1, \dots, v_4) \cdot \psi_K (e).
	\end{align*}
\end{definition}
%Note that the $1/2$ is there since a pair can be ordered in two different ways.
\begin{proposition}
	Let $H$ be a $3$-uniform hypergraph on $n \geq 5$ vertices with $\delta_2^{\ess}(H) > 5n / 6$. Then, the weight of each pair $p \in E(\partial  H)$ equals 
	\begin{equation*}
		\deg^{w_H}(p) = 1.
	\end{equation*}  
	\label{prop:Wh_one}
\end{proposition}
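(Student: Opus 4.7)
The plan is to unfold the definition of $\deg^{w_H}(p)$, swap the order of summation so that the outer sum is over ordered $5$-cliques rather than over edges containing $p$, and then use (an ordered analog of) Proposition \ref{prop:gadget_delta} to kill all but the $K$'s with $\{v_1,v_2\} = p$. What remains is a purely combinatorial double sum of the weights $W(v_1,\dots,v_4)$, which telescopes cleanly: peeling off the innermost sum cancels the corresponding factor of $W$. This is the structurally natural computation once the weighting has been set up as a cascade of uniform delegations.

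Concretely, I would first write
\begin{equation*}
	\deg^{w_H}(p) = \frac{1}{2} \sum_{K = (v_1,\dots,v_5) \in \mO\mK_5(H)} W(v_1,\dots,v_4) \sum_{e \in \mK_3(H[V(K)]) \colon p \subseteq e} \psi_K(e),
\end{equation*}
noting that the inner sum is just $\deg^{\psi_{H[V(K)], v_1 v_2}}(p)$, which by Proposition \ref{prop:gadget_delta} equals $\indicate{\{v_1,v_2\} = p}$. Hence
\begin{equation*}
	\deg^{w_H}(p) = \frac{1}{2} \sum_{(v_1, v_2) \text{ ordering of } p} \; \sum_{\substack{(v_3, v_4, v_5) \\ (v_1, \dots, v_5) \in \mO\mK_5(H)}} W(v_1, v_2, v_3, v_4).
\end{equation*}
The hypothesis $\delta_2^{\ess}(H) > 5n/6$, together with $p \in E(\partial H)$, guarantees each denominator appearing in $W$ is positive (cf.\ the earlier discussion extending any edge to a $K_5^{(3)}$), so nothing is ill-defined.

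Next I would telescope from the inside out. Summing over $v_5$ ranges over exactly $\abs{\mK_5(H, \{v_1, \dots, v_4\})}$ choices, which cancels the $i = 4$ factor of $W$ and leaves $W(v_1, v_2, v_3)$. Summing over $v_4$ then yields a factor of $\abs{\mK_4(H, \{v_1,v_2,v_3\})}$, canceling the $i = 3$ factor and leaving $1 / \abs{\mK_3(H, \{v_1, v_2\})}$. Finally, summing over $v_3 \in CN(\{v_1,v_2\})$ (equivalently, over edges of $H$ through $p$) produces $\abs{\mK_3(H, \{v_1, v_2\})}$, canceling the last factor. So each ordering of $p$ contributes $1$, the two orderings give $2$, and the prefactor $1/2$ yields $\deg^{w_H}(p) = 1$.

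The main (minor) obstacle is bookkeeping: verifying that the sum over $v_5$ genuinely counts $\abs{\mK_5(H, \{v_1, \dots, v_4\})}$ rather than something smaller (same for $v_4$ and $v_3$), and that the exchange of summation is valid. Both follow immediately from the definitions of $\mO\mK_r(H)$ and $\mK_r(H, \cdot)$, but they should be stated explicitly so that the telescoping is transparent. No nontrivial estimates enter — this proposition is really a verification that the delegation scheme defining $w_H$ was set up so the pair weights collapse to exactly $1$.
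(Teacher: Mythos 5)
Your proposal is correct and follows essentially the same route as the paper: the reduction via Proposition \ref{prop:gadget_delta} to showing $\tfrac{1}{2}\sum_{K=(v_1,\dots,v_5)\in\mO\mK_5(H,p),\, p=v_1v_2} W(v_1,\dots,v_4)=1$ is exactly what the paper invokes from its preceding discussion of the ansatz, and your inside-out telescoping of the sums over $v_5$, $v_4$, $v_3$ is precisely the paper's chain of equalities using that $W(v_1,\dots,v_r)$ does not depend on the later coordinates.
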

\begin{proof}
	By our introduction of the ansatz, a weighting of this form satisfies the pair condition if and only if (\ref{eq:cond}) holds for every pair. Hence, it suffices to show that 
	\begin{equation*}
		\frac{1}{2} \sum_{\substack{K = (v_1, \dots, v_5) \in \mO\mK_5(H, p)\colon \\ p = v_1 v_2}} W(v_1, \dots, v_4) = 1.
	\end{equation*}
	Indeed, as $W(v_1, \dots, v_4)$ doesn't depend on $v_5$, $W(v_1, v_2, v_3)$ doesn't depend on $v_4$ etc., we compute that 
	\begin{align*}
		\frac{1}{2} \sum_{\substack{K = (v_1, \dots, v_5) \in \mO\mK_5(H, p)\colon \\ p = v_1 v_2}} W(v_1, \dots, v_4) &= \frac{1}{2} \sum_{\substack{K = (v_1, \dots, v_4) \in \mO\mK_4(H, p)\colon \\ p = v_1 v_2}} W(v_1, \dots, v_3) \\
		&= \frac{1}{2} \sum_{\substack{K = (v_1, \dots, v_3) \in \mO\mK_3(H, p)\colon \\ p = v_1 v_2}} W(v_1, v_2) \\
		&= \frac{1}{2} \sum_{\substack{K = (v_1, v_2) \in \mO\mK_2(H, p)\colon \\ p = v_1 v_2}} 1 \\
		&= 1. \qedhere{}
	\end{align*}
\end{proof}
\section{Reformulation}
\label{sec:reformulation}
From this section onwards, the goal is to show that $w_H$ is non-negative and thus a fractional Steiner triple system for sufficiently large (essential) minimum codegree. Instead of showing this directly, however, we consider an ordered variant of the weighting. 

\begin{definition}[$w_H(O)$]
	Let $H$ be a $3$-uniform hypergraph on $n \geq 5$ vertices with $\delta_2^{\ess}(H) > 5n / 6$. For an ordered edge $O \in \mO\mK_3(H)$, let 
	\begin{equation*}
		w_H(O)= \frac{1}{2} \sum_{K = (v_1,\dots,v_5) \in \mO\mK_5(H, O)} W(v_1, \dots, v_4) \cdot \psi_K (O).
	\end{equation*}
\end{definition}

By our extension of the \enquote{$\subseteq$}-relation to ordered cliques, it is evident that 
\begin{equation*}
	w_H(e) = \sum_{O \in \mO\mK_3 (H, e)} w_H(O).
\end{equation*} 
Hence, we may prove the following, stronger result. 
\begin{theorem}
	Let $x^\ast$ be defined as in Theorem \ref{thm:main_fractional} and let $H$ be a $3$-uniform hypergraph satisfying $\delta_2^{\ess}(H) \geq (1 - x^\ast) \cdot v(H)$ and $v(H) \geq 5$. Then, $w_H(O) \geq 0$ for all $O \in \mO\mK_3(H)$. In particular, $w_H$ is a fractional Steiner triple system of $H$.  
\end{theorem}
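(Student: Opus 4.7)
The plan is to fix an arbitrary ordered triangle $O = (u_1, u_2, u_3) \in \mathcal{O}\mathcal{K}_3(H)$ and show $w_H(O) \geq 0$ under the codegree hypothesis, following the Delcourt--Postle blueprint from \cite{delcourt_postle_fractional} adapted to the $3$-uniform setting. First I would expand $w_H(O)$ as a sum over ordered $5$-cliques $K = (v_1, \dots, v_5)$ containing $O$ as a subsequence, classifying each term by how $V(O)$ meets the distinguished pair $\{v_1, v_2\}$ of $K$. By Definition \ref{def:gadget} the factor $\psi_K(O)$ equals $+1/3$ when $|V(O) \cap \{v_1, v_2\}| \in \{0, 2\}$ and equals $-1/6$ when $|V(O) \cap \{v_1, v_2\}| = 1$. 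The $\binom{5}{3} = 10$ possible subsequence positions $\{i_1 < i_2 < i_3\}$ of $(u_1, u_2, u_3)$ inside $(v_1, \dots, v_5)$ split into the $3$ positions with $\{1, 2\} \subseteq \{i_1, i_2, i_3\}$ (type A, positive sign), the $6$ positions with $|\{1, 2\} \cap \{i_1, i_2, i_3\}| = 1$ (type B, negative sign), and the unique position $\{3, 4, 5\}$ (type C, positive sign); together with the two orderings of the remaining pair of vertices of $V(K)$, this decomposes $w_H(O)$ into two positive contributions $A, C$ and one negative contribution $-B$.

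Next I would introduce local parameters to rewrite these contributions as rational expressions. The factor $W(v_1, \dots, v_4) = 1/\bigl(\deg(v_1 v_2) \cdot |\mathcal{K}_4(H, \{v_1, v_2, v_3\})|\bigr)$ involves the codegree of the distinguished pair and the number of $4$-cliques extending the triple $\{v_1, v_2, v_3\}$, while the outer sum records the number of ways to complete to a full $5$-clique. Setting $x := 1 - \delta_2^{\ess}(H)/v(H)$, repeated application of inclusion--exclusion to the codegree bound yields $|CN(S)| \geq (1 - |S|\,x)\,v(H)$ for small $S \subseteq V(H)$ whose pairs all have positive codegree; these estimates furnish uniform lower bounds for the positive contributions $A, C$ and uniform upper bounds for the negative contribution $B$. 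The content of Section \ref{sec:reformulation} is thus to recast the problem as minimizing an explicit rational function of a bounded collection of such normalized local parameters.

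The main obstacle is the ensuing non-linear optimization carried out in Section \ref{sec:optimization}. I would follow Delcourt--Postle and exploit convexity and symmetry properties of the objective to argue that the worst case is attained at a highly symmetric extremal configuration, collapsing the free variables down to $x$ alone. After clearing denominators, the inequality $w_H(O) \geq 0$ should reduce to the polynomial inequality $p(x) = 8x^3 - 22x^2 + 10x - 1 \leq 0$. Since $p'(x) = 24x^2 - 44x + 10 > 0$ on $[0, 1/6]$ and $p(0) = -1 < 0 < p(1/6)$, the polynomial has a unique root $x^*$ in this interval, and $p(x) \leq 0$ precisely when $x \leq x^*$, i.e., when $\delta_2^{\ess}(H) \geq (1 - x^*)\,v(H)$. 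Combined with Proposition \ref{prop:Wh_one}, this proves that $w_H$ is a perfect fractional Steiner triple system of $H$, finishing Theorem \ref{thm:main_fractional} and hence Theorem \ref{thm:main_thesis}.
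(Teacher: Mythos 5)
Your setup coincides with the paper's: the expansion of $w_H(O)$ over ordered $5$-cliques, the $3{+}6{+}1$ classification of the subsequence positions by $\abs{\{i_1,i_2,i_3\}\cap\{1,2\}}$, and the resulting pairing of positive and negative terms are exactly the content of Lemma \ref{lem:cancel}. Two details in your rewriting are already off, though. The weight has \emph{three} factors, $W(v_1,\dots,v_4)=\abs{\mK_3(H,\{v_1,v_2\})}^{-1}\abs{\mK_4(H,\{v_1,v_2,v_3\})}^{-1}\abs{\mK_5(H,\{v_1,\dots,v_4\})}^{-1}$, not the two you wrote; the last factor is what makes the telescoping in Proposition \ref{prop:Wh_one} work and it appears in every term of the optimization. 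And the bound $\abs{CN(S)}\geq(1-\abs{S}\,x)\,v(H)$ is false for $4$-sets (the naive count over the six pairs gives $1-6x$); more importantly, absolute lower bounds of this kind are not what the argument needs. The proof runs on the \emph{relative} supermodularity bounds, e.g.\ $\widehat{CN}(v_1,\dots,v_4)\geq\widehat{CN}(v_1,v_2,v_3)+\widehat{CN}(v_1,v_2,v_4)-\widehat{CN}(v_1,v_2)-d$, because only these make the paired differences such as $W(x_1,y,x_2,z)-W(x_1,x_2,y,z)$ quantitatively small.

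The genuine gap is that the entire optimization, which is the substance of the proof, is replaced by the assertion that ``convexity and symmetry'' force the worst case onto a ``highly symmetric extremal configuration'' depending on $x$ alone. That is not how the reduction goes, and as described it would fail. The symmetrization in the paper acts only on the indices of the $y$'s and $z$'s (collapsing (P1) to the eight-variable program (P3)); across the different density types the extremal point is decidedly asymmetric: $e$ is pushed \emph{down} to $1-d$ while $e_0$ and $f$ are pushed \emph{up} to $1$, and $q_0,q,p$ sit at their supermodular lower bounds. These conclusions come not from convexity but from variable-by-variable monotonicity claims, which are only valid after replacing the bracketed differences by their ramp functions $(\cdot)^+$ so that the signs of all coefficients are controlled (Theorem \ref{thm:W4}); the monotonicity in $e_0$ in particular requires the separate, rather delicate analysis of $\varphi$ and $\zeta$. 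Finally, even at the one-variable stage one cannot simply evaluate at the corner: $f=1$ is \emph{not} the maximizer of $\hat{W}_5$ when $d>x^\ast$, so the claim that the optimum is attained there is itself a $d$-dependent statement, proved by showing $\hat{W}_5'$ is decreasing with $\hat{W}_5'(1)\geq 0$, and it is only at this point that Proposition \ref{prop:root} and the polynomial $8x^3-22x^2+10x-1$ legitimately enter. Your sketch names the right destination but supplies none of the arguments that get there.
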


It will be useful to represent $w_H(O)$ in a more explicit manner.
\begin{lemma}
	\label{lem:cancel}
	Let $H$ be a $3$-uniform hypergraph on $n \geq 5$ vertices with $\delta_2^{\ess}(H) > 5n / 6$. If $O = (x_1, x_2, x_3) \in \mO \mK_3(H)$, then $w_H(O)$ can be written as
	\begin{align*} 
		\frac{1}{6} \bigg( & W(x_1, x_2) - \sum_{y \in CN(x_1, x_2, x_3)} \bigg(W(x_1, y, x_2) - W(x_1, x_2, y)  \ +   \\
		\hspace{30pt}  & \sum_{\mathclap{z \in CN(x_1, x_2, x_3, y)}} \hspace{25pt} \paren{W(x_1, y, x_2, z) - W(x_1, x_2, y, z) + W(x_1, y, z, x_2) - W(y, z, x_1, x_2)} \bigg)\bigg). 
	\end{align*} 
\end{lemma}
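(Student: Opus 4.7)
The plan is to expand $w_H(O)$ directly from its definition, enumerate the contributions according to the positions of $x_1,x_2,x_3$ within $K=(v_1,\ldots,v_5)$, and then reduce using two simple identities for $W$. Since $|V(K)\setminus V(O)|=2$, each unordered $K_5^{(3)}$ on $\{x_1,x_2,x_3,y,z\}$ with $y,z\notin V(O)$ gives exactly $\binom{5}{3}\cdot 2!=20$ ordered $5$-cliques containing $O$: ten placements of $(x_1,x_2,x_3)$ times two orderings of $\{y,z\}$. I therefore parametrize the sum by ordered pairs $(y,z)$ with $y\in CN(x_1,x_2,x_3)$ and $z\in CN(x_1,x_2,x_3,y)\setminus\{y\}$ (under the convention that $y$ precedes $z$ in $K$), and loop through the ten position patterns.

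For each pattern I read off the explicit tuple $(v_1,\ldots,v_5)$, the weight $W(v_1,\ldots,v_4)$, and the value $\psi_K(O)$. The latter is $+\tfrac{1}{3}$ for the four patterns $(1,2,3),(1,2,4),(1,2,5),(3,4,5)$ (where $\{v_1,v_2\}$ is either contained in or disjoint from $V(O)$) and $-\tfrac{1}{6}$ for the other six. Combining the ten contributions with the factor $\tfrac{1}{2}$ from the definition yields
\begin{align*}
w_H(O) ={}& \tfrac{1}{6}\sum_{y,z}\bigl[W(x_1,x_2,x_3,y)+W(x_1,x_2,y,x_3)+W(x_1,x_2,y,z)+W(y,z,x_1,x_2)\bigr] \\
&- \tfrac{1}{12}\sum_{y,z}\bigl[W(x_1,y,x_2,x_3)+W(x_1,y,x_2,z)+W(x_1,y,z,x_2) \\
&\qquad\qquad +W(y,x_1,x_2,x_3)+W(y,x_1,x_2,z)+W(y,x_1,z,x_2)\bigr].
\end{align*}

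Two observations then finish the proof. First, any term $W(v_1,\ldots,v_4)$ that still has $x_3$ among its four arguments is independent of $z$, and summing over $z\in CN(x_1,x_2,x_3,y)$ produces the factor $|CN(x_1,x_2,x_3,y)|=|\mK_5(H,\{x_1,x_2,x_3,y\})|$, which exactly cancels the last denominator in $W$. Thus $x_3$ telescopes out of any such term; for example $\sum_z W(x_1,x_2,y,x_3)=W(x_1,x_2,y)$, and applied twice, $\sum_{y,z}W(x_1,x_2,x_3,y)=W(x_1,x_2)$. Second, $W$ depends only on the unordered initial segments $\{v_1,\ldots,v_i\}$, so $W(y,x_1,x_2,\ast)=W(x_1,y,x_2,\ast)$ and $W(y,x_1,z,x_2)=W(x_1,y,z,x_2)$; this pairs the six negative summands into three, turning $-\tfrac{1}{12}$ into $-\tfrac{1}{6}$. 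After substituting these reductions and regrouping, the resulting expression matches the lemma's formula exactly. The main obstacle is purely bookkeeping: producing a correct table of the ten position patterns with their associated $(v_1,\ldots,v_4)$ and $\psi_K(O)$, and ensuring the telescoping is applied to precisely the tuples still containing $x_3$ while the initial-segment symmetry is applied to pair up the remaining tuples.
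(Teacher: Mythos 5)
Your proposal is correct and follows essentially the same route as the paper's proof: expand $w_H(O)$ into the ten ordered placements of $(x_1,x_2,x_3)$ with their gadget values $\pm\tfrac13,-\tfrac16$, use the symmetry of $W$ in its first two arguments to merge the six negative terms into three, and telescope the $z$- and $y$-sums via $\abs{CN(x_1,x_2,x_3,y)}=\abs{\mK_5(H,\set{x_1,x_2,x_3,y})}$ for the terms independent of those variables. Your intermediate ten-term display and both reduction steps match the paper's computation exactly (the paper merely leaves the cardinality identity behind the telescoping implicit).
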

In this representation of $w_H(O)$, we can see what Delcourt and Postle refer to as \emph{cancellation}: Apart from the positive leading term $W(x_1, x_2)$, similar weights are paired up to (hopefully) cancel each other in the inner terms of the expression. For example, $W(x_1, y, x_2, z)$ and $W(x_1, x_2, y, z)$ differ in their first product, which are $1/ |{\mK_3(H, \set{x_1, y})}|$ and $1/|\mK_3 (H, \set{x_1, x_2})|$ respectively. Hence, as long as the (essential) minimum codegree is high enough, these terms can't differ too much.  
\begin{proof}[Proof of Lemma \ref{lem:cancel}]
	By explicitly using Definition \ref{def:gadget}, we see that $w_H(O)$ equals 
	\begin{alignat*}{3}
		\frac{1}{2} \sum_{\substack{y \in CN(x_1, x_2, x_3), \\ z \in CN(x_1, x_2, x_3, y)}}  \bigg( &W(x_1, x_2, x_3, y) \cdot\paren{+ \frac{1}{3}} &&+ W(x_1, x_2, y, x_3) \cdot\paren{+ \frac{1}{3}} \\
		&+ W(x_1, y, x_2, x_3) \cdot\paren{- \frac{1}{6}} &&+ W(y, x_1, x_2, x_3) \cdot\paren{- \frac{1}{6}}  \\
		&+ W(x_1, x_2, y, z) \cdot \paren{+ \frac{1}{3}} &&+ W(x_1, y, x_2, z) \cdot\paren{- \frac{1}{6}}\\
		&+ W(y, x_1, x_2, z)\cdot \paren{- \frac{1}{6}} &&+ W(x_1, y, z, x_2) \cdot\paren{- \frac{1}{6}}  \\
		&+ W(y, x_1, z, x_2) \cdot\paren{- \frac{1}{6}} &&+ W(y, z, x_1, x_2) \cdot\paren{+ \frac{1}{3}} \bigg). 
	\end{alignat*}
	
	Recall that the order of the starting pair doesn't change the value of $W(\cdot)$. In particular, we have by that symmetry 
	\begin{align*}
		W(x_1, y, x_2, x_3) &= W(y, x_1, x_2, x_3), \\
		W(x_1, y, x_2, z) &= W(y, x_1, x_2, z), \\
		W(x_1, y, z, x_2) &= W(y, x_1, z, x_2).
	\end{align*} 
	
	Hence, the above expression for $w_H(O)$ can be rewritten as 
	\begin{alignat*}{4}
		\frac{1}{6} \sum_{y \in CN(x_1, x_2, x_3)} \sum_{z \in CN(x_1, x_2, x_3, y)} \bigg({}& W(x_1, x_2, x_3, y)  &&+ W(x_1, x_2, y, x_3)  &&- W(x_1, y, x_2, x_3) \\
		+ W(x_1, x_2, y, z){}& - W(x_1, y, x_2, z) &&- W(x_1, y, z, x_2) 
		&&+ W(y, z, x_1, x_2) \bigg). 
	\end{alignat*}
	
	Note that three of the inner terms do not depend on $z$, so summing over all $z \in CN(x_1, x_2, x_3, y)$ cancels out the last product term in e.g. $W(x_1, x_2, y, x_3)$. 
	
	Thus, we get 
	\begin{alignat*}{3}
		\frac{1}{6} \sum_{y \in CN(x_1, x_2, x_3)}{}& \bigg(W(x_1, x_2, x_3) + W(x_1, x_2, y) - W(x_1, y, x_2)  \\
		+\sum_{z \in CN(x_1, x_2, x_3, y)}{}& \big( W(x_1, x_2, y, z)  - W(x_1, y, x_2, z) - W(x_1, y, z, x_2) + W(y, z, x_1, x_2) \big)\bigg). 
	\end{alignat*} 
	
	Furthermore, note that $W(x_1, x_2, x_3)$ doesn't depend on $y$, so similar manipulations and additional rearranging give us the desired expression for $w_H(O)$. 
\end{proof}

For the optimization step, it will be more convenient to work with the following function:
\begin{definition}[$w_{H,1}$]
	Let $H$ be a $3$-uniform hypergraph on $n \geq 5$ vertices with $\delta_2^{\ess}(H) > 5n / 6$. For $O = (x_1, x_2, x_3) \in \mO \mK_3(H)$, let 
	\begin{align*}
		w_{H,1}(O) ={}& 1 - 6 \abs{\mK_3 (H, \set{x_1, x_2})} w_H(O) \\
		={}&  \abs{CN(x_1, x_2)} \sum_{y \in CN(x_1, x_2, x_3)} \bigg(W(x_1, y, x_2) - W(x_1, x_2, y)  \\
		& + \sum_{z \in CN(x_1, x_2, x_3, y)}  (W(x_1, y, x_2, z) - W(x_1, x_2, y, z) \\
		& + W(x_1, y, z, x_2) - W(y, z, x_1, x_2)) \bigg).
	\end{align*}
	
\end{definition}

Clearly, it then suffices to show the following:

\begin{theorem}
	Let $x^\ast$ be defined as in Theorem \ref{thm:main_fractional} and let $H$ be a $3$-uniform hypergraph satisfying $\delta_2^{\ess}(H) \geq (1 - x^\ast) \cdot v(H)$ and $v(H) \geq 5$. Then, $w_{H,1}(O) \leq 1$ for all $O \in \mO\mK_3(H)$.
\end{theorem}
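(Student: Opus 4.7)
The plan is to adapt Delcourt and Postle's optimization argument to bound the explicit expression for $w_{H,1}(O)$ provided by Lemma~\ref{lem:cancel}. Writing $n = v(H)$ and $x = 1 - \delta_2^{\ess}(H)/n \leq x^\ast$, I normalize every clique count: set $\alpha_{ij} = |\mK_3(H, \{i,j\})|/n$, $\beta_{ijk} = |\mK_4(H, \{i,j,k\})|/n$, and $\gamma_{ijkl} = |\mK_5(H, \{i,j,k,l\})|/n$. Iterated inclusion–exclusion on common co-neighborhoods, combined with the codegree hypothesis applied to each pair inside the relevant clique, yields the box constraints $\alpha \in [1 - x, 1]$, $\beta \geq 1 - 3x$, and $\gamma \geq 1 - 6x$, the last of which is strictly positive since $x^\ast < 1/6$. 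In these variables every factor of $n$ hidden in $|CN(\cdot)|$ telescopes with a matching $1/n$ in a $W$, so $w_{H,1}(O)$ becomes a dimensionless rational function of the $\alpha$'s, $\beta$'s, and $\gamma$'s.

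The crucial structural observation is that the summands in Lemma~\ref{lem:cancel} are naturally arranged into \emph{cancellation pairs} sharing their deepest clique factor. In the single sum, $W(x_1, y, x_2)$ and $W(x_1, x_2, y)$ share their $\beta$-factor; among the four 4-clique terms, $\gamma_{x_1 x_2 y z}$ is common, and they pair further into $\{W(x_1, y, x_2, z), W(x_1, x_2, y, z)\}$ and $\{W(x_1, y, z, x_2), W(y, z, x_1, x_2)\}$, each pair again sharing a $\beta$-factor. I will rewrite each difference over a common denominator so as to expose small factors of the form $\alpha_{x_1 y}^{-1} - \alpha_{x_1 x_2}^{-1}$, which the box constraints force to be $O(x)$. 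The outer multiplier $|CN(x_1, x_2)| = n\alpha_{x_1 x_2}$ is of order $n$, so the inner sums must contribute a cancellation of order $1/n$; this is precisely what the pairing achieves after the rewrite.

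After this reformulation, the inequality $w_{H,1}(O) \leq 1$ becomes a non-linear inequality in the normalized counts, subject to the box constraints above. I will reduce to a small number of effective variables by exploiting monotonicity: each rewritten term should be monotone in each $\alpha$, $\beta$, or $\gamma$ it contains, so the worst case is attained at a corner of the feasible region, with all quantities pushed to the lower bound forced by $x = x^\ast$. What remains is a one-variable inequality in $x$, and the cubic $p(x) = 8x^3 - 22x^2 + 10x - 1$ should drop out here as precisely the polynomial whose sign change at $x^\ast$ witnesses $w_{H,1}(O) \leq 1$; matching the condition that $x^\ast$ is the root of $p$ in $[0, 1/6]$.

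The main obstacle will be verifying the monotonicity claim uniformly on the feasible region: because $w_{H,1}(O)$ is a signed sum of reciprocal products, the partial derivatives with respect to individual $\alpha$'s and $\beta$'s have competing contributions, and a single wrong sign would force analysis of interior critical points. A secondary, more delicate issue is tracking how the outer factor $n\alpha_{x_1 x_2}$ balances against the $O(1/n)$ savings from the pairwise cancellations without leaving a stray $O(1)$ residue that would ruin the bound; this quantitative bookkeeping is what ultimately forces the specific cubic $p(x)$ and pins down the constant $1 - x^\ast$.
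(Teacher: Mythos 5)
Your overall strategy---normalize the clique counts, constrain them via inclusion--exclusion on co-neighborhoods, and solve the resulting non-linear program---is the paper's (and Delcourt--Postle's) approach, but two of your concrete choices would break the argument. The constraints you extract are too weak: you keep only absolute box bounds $\beta \geq 1-3x$ and $\gamma \geq 1-6x$ for the triple and quadruple densities, whereas the paper retains the \emph{coupled} supermodularity bounds, e.g.\ $\widehat{CN}(x_1,x_2,y) \geq \widehat{CN}(x_1,x_2)+\widehat{CN}(x_1,y)-1-d$ together with the monotonicity upper bound $\widehat{CN}(x_1,x_2,y) \leq \widehat{CN}(x_1,x_2)$, and analogously at depth four. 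This is not cosmetic: at the extremal point the pair densities are $e_0=f=1$ and $e=1-d$, so the coupled bounds force the triple densities to $1-2d$ and the quadruple density to $1-4d$, while your box constraints would permit $1-3d$ and $1-6d$. The objective is decreasing in these denominators, so your relaxation has a strictly larger maximum, and the polynomial that falls out at the end is not $8x^3-22x^2+10x-1$ but one with a strictly smaller root; the threshold $1-x^\ast$ cannot be recovered from your constraint set.

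Second, your description of the extremal corner is incorrect: the maximum is \emph{not} attained with ``all quantities pushed to the lower bound.'' The objective is decreasing in $e$ but \emph{increasing} in the other two pair densities, and the worst case is the asymmetric point $e=1-d$, $e_0=f=1$. Establishing this monotonicity in $e_0$ and $f$ is precisely the hard part of the paper's proof (the reductions to (P4), (P5), and the final one-variable analysis, which already invoke the sign of the cubic $p$), and it is exactly the step you defer as ``the main obstacle''---the signed reciprocal products are \emph{not} uniformly monotone in one direction, which is why the paper introduces the ramp-function surrogate $\hat{W}_4$ before arguing term by term. You also do not address that the inner sums over $y$ and $z$ have a variable number of summands; the paper needs two symmetrization lemmas (replacing every inner term by the maximizing one) to reach a constant number of variables before any corner-point argument is available. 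Until the constraint set is tightened and the symmetrization and monotonicity steps are actually carried out, the proposal does not yield the theorem.
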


\section{Optimization}
\label{sec:optimization}
Ideally, our goal can be phrased as follows: Determine 
\begin{equation*}
	\sup\set{d \in \left[0, \frac{1}{6}\right) \colon \lim_{n \longrightarrow\infty}\sup_{\substack{H \ 3\text{-uniform}\colon \\ v(H) \geq n, \\ \delta_2^{\ess}(H) \geq (1-d) v(H)}} \max_{O \in \mK_3(H)} w_{H,1}(O) \leq 1}.
\end{equation*} 

This turns out to still be too difficult, so we will one by one relax the problem. First, to abstract away from $H$, we will rewrite $w_{H,1}$ using \emph{scaled} versions of $CN$ and $W$. 

\begin{definition}[$\widehat{CN}$]
	Let $H$ be a $3$-uniform hypergraph with $V(H) \neq \emptyset$. 
	For $\smash{P\subseteq V(H)^{(2)}}$ and $S \subseteq V(H)$, we define the \emph{common co-neighborhood density} to be 
	\begin{align*}
		\widehat{CN}(P) &= \frac{\abs{CN(P)}}{v(H)} \in [0,1], & \widehat{CN}(S) &= \frac{\abs{CN(S)}}{v(H)} \in [0,1]. 
	\end{align*} 
	Similarly, for an ordered $r$-clique $K = (v_1, \dots, v_r) \in \mO\mK_r(H)$, let 
	\begin{equation*}
		\hat{W}(K) = v(H)^{r-1} W(K) = \prodOver{i}{2}{r} \frac{v(H)}{\abs{\mO\mK_{i+1} (H, \set{v_1, \dots, v_i})}} = \prodOver{i}{2}{r} \frac{1}{\abs{\widehat{CN} (v_1, \dots, v_i)}}.
	\end{equation*}
\end{definition}
The following is then immediate:
\begin{proposition}
	Let $H$ be a $3$-uniform hypergraph with $v(H) \geq 5$ and $\delta_2^{\ess}(H) \geq 5v(H) / 6$. We have
	\begin{align*}
		w_{H,1}(O) ={}&  \abs{\widehat{CN}(x_1, x_2)} \cdot \frac{1}{v(H)}\sum_{y \in CN(x_1, x_2, x_3)} \bigg(\hat{W}(x_1, y, x_2) - \hat{W}(x_1, x_2, y)  \\
		& + \frac{1}{v(H)} \sum_{z \in CN(x_1, x_2, x_3, y)}  (\hat{W}(x_1, y, x_2, z) - \hat{W}(x_1, x_2, y, z) \\
		& + \hat{W}(x_1, y, z, x_2) - \hat{W}(y, z, x_1, x_2)) \bigg).
	\end{align*}
\end{proposition}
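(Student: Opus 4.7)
The plan is to prove the proposition by a straightforward algebraic substitution of the scaled quantities $\widehat{CN}$ and $\hat{W}$ into the expression for $w_{H,1}(O)$ from the preceding definition. No structural argument is required, only careful bookkeeping of the powers of $v(H)$. The hypothesis $\delta_2^{\ess}(H) > 5v(H)/6$ and $v(H) \geq 5$ is inherited directly from the previous definition and is only used to ensure that the $5$-cliques (and hence all the sub-cliques on which $W$ and $\hat{W}$ are evaluated) actually exist, exactly as in the discussion preceding Proposition \ref{prop:Wh_one}.

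First, I would record the two conversion identities that follow immediately from the definitions: $\abs{CN(x_1,x_2)} = v(H) \cdot \widehat{CN}(x_1,x_2)$, and, for any ordered $r$-clique $K$, $W(K) = \hat{W}(K)/v(H)^{r-1}$. The given formula for $w_{H,1}(O)$ has $\abs{CN(x_1,x_2)}$ as a global prefactor together with two kinds of $W$-terms: the singly-nested summands $W(x_1,y,x_2)$ and $W(x_1,x_2,y)$ live on ordered $3$-cliques (so each picks up a factor $1/v(H)^2$ under the substitution), while the doubly-nested summands $W(x_1,y,x_2,z)$, $W(x_1,x_2,y,z)$, $W(x_1,y,z,x_2)$, and $W(y,z,x_1,x_2)$ live on ordered $4$-cliques (so each picks up a factor $1/v(H)^3$).

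Next, I would substitute and track the $v(H)$-powers blockwise. For the $3$-clique block, $\abs{CN(x_1,x_2)} \cdot W(\cdot) = \widehat{CN}(x_1,x_2) \cdot \hat{W}(\cdot)/v(H)$, and pulling the single $1/v(H)$ outside the sum over $y \in CN(x_1,x_2,x_3)$ reproduces the first summation block of the target expression. For the $4$-clique block, $\abs{CN(x_1,x_2)} \cdot W(\cdot) = \widehat{CN}(x_1,x_2) \cdot \hat{W}(\cdot)/v(H)^2$; one factor of $1/v(H)$ is absorbed in front of the $y$-sum (matching the factor already placed there by the $3$-clique computation), while the second factor of $1/v(H)$ is placed in front of the inner sum over $z \in CN(x_1,x_2,x_3,y)$, which is exactly the placement appearing in the target.

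The only potential obstacle is lining up the exponent $r-1$ in the scaling of $\hat{W}$ against the two different clique sizes ($r=3$ versus $r=4$) in parallel; once this accounting is done correctly, the identity is immediate, and the four terms of the $4$-clique block transform in lockstep because each carries the same $v(H)^{r-1} = v(H)^3$ factor. No inequality or limit is involved, so this closes the proof.
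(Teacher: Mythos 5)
Your proposal is correct and matches the paper's intent exactly: the paper offers no proof at all (it states the identity as ``immediate''), and your blockwise bookkeeping of the factors $v(H)^{r-1}$ for the $3$-clique terms versus the $4$-clique terms, combined with $\abs{CN(x_1,x_2)} = v(H)\cdot\widehat{CN}(x_1,x_2)$, is precisely the substitution the paper has in mind. The powers check out: each $3$-clique term yields $v(H)\cdot v(H)^{-2} = v(H)^{-1}$ and each $4$-clique term yields $v(H)\cdot v(H)^{-3} = v(H)^{-2}$, matching the placement of the two $1/v(H)$ factors in the target expression.
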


In this representation of $w_{H,1}$, all the information of $H$ we really need are the common co-neighborhood densities of a small number of vertex subsets. So, instead of considering $H$ itself, we will work with the common co-neighborhood densities directly using the bounds implied by the properties of $H$. 

To establish these bounds, we review some properties of $\widehat{CN}$: Obviously, $\widehat{CN}$ is monotonically decreasing with respect to $\subseteq$. Furthermore, we have that $\smash{\widehat{CN}}$ is \emph{supermodular}. 
\begin{proposition}
	Let $H$ be a $3$-uniform hypergraph with $V(H) \neq \emptyset$. If $A, B \subseteq V(H)^{(2)}$, then 
	\begin{equation*}
		\widehat{CN}(A \cup B) \geq \widehat{CN} (A) + \widehat{CN} (B) - \widehat{CN}(A \cap B).
	\end{equation*}
\end{proposition}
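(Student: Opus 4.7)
The plan is to unfold the definition of $\widehat{CN}$ and reduce everything to a standard inclusion--exclusion inequality on the sets $CN(A)$ and $CN(B) \subseteq V(H)$. After clearing the denominator $v(H)$, the claim becomes
\begin{equation*}
    \abs{CN(A \cup B)} + \abs{CN(A \cap B)} \geq \abs{CN(A)} + \abs{CN(B)}.
\end{equation*}

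First I would establish two set-theoretic identities/inclusions about $CN$. Writing $CN(P) = \bigcap_{p \in P} N(p)$, one has $CN(A \cup B) = CN(A) \cap CN(B)$ because intersecting neighborhoods over $A \cup B$ is the same as intersecting over $A$ and then over $B$. In the other direction, $CN(A \cap B) \supseteq CN(A) \cup CN(B)$ because restricting the index set to $A \cap B$ only enlarges the intersection (this is the monotonicity of $CN$ with respect to $\subseteq$, noted in the preceding paragraph of the paper).

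Next I would combine these with ordinary inclusion--exclusion for the finite sets $X = CN(A)$ and $Y = CN(B)$:
\begin{equation*}
    \abs{X} + \abs{Y} = \abs{X \cup Y} + \abs{X \cap Y}.
\end{equation*}
Using $X \cap Y = CN(A \cup B)$ on the right, and using $X \cup Y \subseteq CN(A \cap B)$ to replace $\abs{X \cup Y}$ by the upper bound $\abs{CN(A \cap B)}$, we obtain
\begin{equation*}
    \abs{CN(A)} + \abs{CN(B)} \leq \abs{CN(A \cap B)} + \abs{CN(A \cup B)}.
\end{equation*}
Dividing both sides by $v(H)$ yields the claimed supermodularity.

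There is essentially no obstacle here; the only thing to be a bit careful about is the direction of the two inclusions (which one flips into an equality, and which one gives only a containment), since mixing them up would give a submodularity inequality instead. Both follow immediately from the monotonicity of $P \mapsto CN(P)$ under the reverse order of $\subseteq$.
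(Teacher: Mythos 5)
Your proof is correct and is essentially identical to the paper's: both set $X = CN(A)$, $Y = CN(B)$, use $CN(A\cup B) = X \cap Y$ together with $X \cup Y \subseteq CN(A\cap B)$, and conclude by inclusion--exclusion and dividing by $v(H)$. No issues.
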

\begin{proof}
	Set $X = \bigcap_{a \in A} N(a)$ and $Y = \bigcap_{b \in B} N(b)$. Obviously, 
	\begin{equation*}
		\abs{X \cap Y} = \abs{X} + \abs{Y} - \abs{X \cup Y}.
	\end{equation*}
	Note, however, that $\abs{X} = v(H) \cdot \widehat{CN}(A)$ and $\abs{Y} = v(H) \cdot \widehat{CN}(B)$. Moreover, 
	\begin{equation*}
		\abs{X \cap Y} = \abs{\bigcap_{p \in A \cup B} N(p)} = v(H) \cdot \widehat{CN}(A \cup B).
	\end{equation*}
	Similarly, we have 
	\begin{equation*}
		\abs{X \cup Y} = \abs{\paren{\bigcap_{a \in A} N(a)} \cup \paren{\bigcap_{b \in B} N(b)}} \leq  \abs{{\bigcap_{p \in A \cap B} N(p)} } = v(H) \cdot \widehat{CN}(A \cap B).
	\end{equation*}
	Thus, dividing by $v(H)$, we are done. 
\end{proof}

Hence, for vertices $v_1, \dots, v_4 \in V(H)$, where $H$ satisfies the usual assumptions, we have 
\begin{align*}
	\widehat{CN}(v_1, v_2, v_3) &\geq \widehat{CN}(\set{v_1v_2, v_1v_3}) + \widehat{CN}(v_2, v_3) - \widehat{CN}(\emptyset) \\
	&\geq   \paren{\widehat{CN}(v_1, v_2) + \widehat{CN}(v_1, v_3) - \widehat{CN}(\emptyset)} + \widehat{CN}(v_2, v_3) -1 \\
	&\geq \widehat{CN}(v_1, v_2) + \widehat{CN}(v_1, v_3) + \widehat{CN}(v_2, v_3) - 2 \\
	&\geq \widehat{CN}(v_1, v_2) + \widehat{CN}(v_1, v_3) - 1 - d, \\
	\widehat{CN}(v_1, \dots, v_4) &\geq \widehat{CN}\of{\set{v_1, v_2, v_3}^{(2)} \cup \set{v_1, v_2, v_4}^{(2)}} + \widehat{CN}(v_3, v_4) - \widehat{CN}(\emptyset) \\
	&\geq \paren{\widehat{CN}\of{v_1, v_2, v_3} + \widehat{CN}\of{v_1, v_2, v_4} - \widehat{CN}\of{v_1, v_2}} + \widehat{CN}(v_3, v_4) - 1 \\
	&\geq {\widehat{CN}\of{v_1, v_2, v_3} + \widehat{CN}\of{v_1, v_2, v_4} - \widehat{CN}\of{v_1, v_2}} - d.
\end{align*}
So, our goal is now to solve the following program (P1) for $d \in [0, 1/6)$: \\ \\
\emph{Maximize $w_{H,1}(O)$ such that for all $y \in CN(x_1, x_2, x_3)$ and $z \in CN(x_1, x_2, x_3, y)$ }

{\small
	\begin{align*}
		\widehat{CN}(x_1, x_2) &\in [1-d, 1],\\ 
		\widehat{CN}(x_1, y) &\in [1-d, 1], \\
		\widehat{CN}(y, z) &\in [1-d, 1], \\
		\widehat{CN}(x_1, x_2, y) &\in \ofS{\widehat{CN}(x_1, x_2) + \widehat{CN}(x_1, y) - 1- d, \widehat{CN}(x_1, x_2)},  \\
		\widehat{CN}(x_1, y, z) &\in
		\ofS{\widehat{CN}(x_1, y) + \widehat{CN}(y, z) - 1- d, \widehat{CN}(x_1, y)}, \\
		\widehat{CN}(x_1, x_2, y, z) &\in \ofS{\widehat{CN}\of{x_1, x_2, y} + \widehat{CN}\of{x_1, y, z} - \widehat{CN}\of{x_1, y} - d, \widehat{CN}\of{x_1, x_2, y}}. 
	\end{align*}
}%

We note that $w_{H,1}(O)$ is under our (essential) minimum codegree assumption a well-defined, continuous function on the domain of (P1). Indeed, it's clear that all the common co-neighborhood densities are strictly positive and at most $1$.  

To emphasize that we now think in variables, fix an enumeration $y_1, y_2, \dots$ on $CN(x_1, x_2, x_3)$ and $z_{i,1}, z_{i,2},\dots$ on $CN(x_1, x_2, x_3, y_i)$ for every $y_i \in CN(x_1, x_2, x_3)$. Let $R_0 = \abs{{CN}(x_1, x_2, x_3)} $ and $R_i = \abs{{CN}(x_1, x_2, x_3, y_i)}$ for $i \in [R_0]$. We may now relabel the common co-neighborhood densities as follows:
\begin{align*}
	\widehat{CN}(x_1, x_2) &\longrightarrow e_0, \\
	\widehat{CN}(x_1, y_i) &\longrightarrow e_i, \\
	\widehat{CN}(y_i, z_{i,j}) &\longrightarrow f_{i,j},\\
	\widehat{CN}(x_1, x_2, y_i) &\longrightarrow q_{i,0} \text{ for } i \in [R_0],  \\
	\widehat{CN}(x_1, y_i, z_{i,j}) &\longrightarrow q_{i,j} \text{ for } i \in [R_0], j \in [R_i], \\
	\widehat{CN}(x_1, x_2, y_i, z_{i,j}) &\longrightarrow p_{i,j}  \text{ for } i \in [R_0], j \in [R_i].
\end{align*}

Changing the variable names accordingly in $w_{H,1}$ gives 
\begin{align*}
	\hat{W}_1 = \frac{e_0}{v(H)} \sumOver{i}{1}{R_0} \bigg({} \frac{1}{q_{i,0}} \paren{\frac{1}{e_i} - \frac{1}{e_0}} 
	+ \frac{1}{v(H)} \sumOver{j}{1}{R_i} \bigg(\frac{1}{p_{i,j}} \paren{\frac{1}{q_{i,j}} \paren{\frac{1}{e_i} - \frac{1}{f_{i,j}}} + \frac{1}{q_{i,0}} \paren{\frac{1}{e_i} - \frac{1}{e_0}}}\bigg)\bigg).
\end{align*}

Thus, (P1) now reads:\\ \\
\emph{Maximize $\hat{W}_1$ such that for all $i \in [R_0]$ and $j \in [R_i]$ }
\begin{align*}
	e_0 &\in [1-d, 1],
	& e_i &\in [1-d, 1], \\
	f_{i,j} &\in [1-d, 1], 
	& q_{i,0} &\in \ofS{e_0 + e_i - 1- d, e_0},  \\
	q_{i,j} &\in
	\ofS{e_i + f_{i,j} - 1- d, e_i}, 
	& p_{i,j} &\in \ofS{q_{i,0} + q_{i,j} - e_i - d, q_{i,0}}, \\
	R_0 &\in \ofS{\frac{1}{2}\cdot v(H), e_0 \cdot v(H)}, 
	& R_i &\in \ofS{0, q_{i,0} \cdot v(H)}.
\end{align*}

Note that the bounds from $R_0$ and $R_i$ are obtained by considering the properties of $\smash{\widehat{CN}}$ and the fact that $d \in [0, 1/6)$. 

\subsection{Reduction to 8 variables}

To reduce the number of variables involved in (P1), we first employ some symmetrization arguments. 
\begin{lemma}
	The maximum value of (P1) is achieved by a point where for all $i \in [R_0]$ and $j,j' \in [R_i]$, we have 
	\begin{equation*}
		f_{i,j} = f_{i,j'}, q_{i,j} = q_{i,j'}, p_{i,j} = p_{i,j'}.  
	\end{equation*}  
\end{lemma}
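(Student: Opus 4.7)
The plan is to exploit an additive decoupling of $\hat{W}_1$ across the second index, once the ``outer'' variables are held fixed. Specifically, I would first fix $v(H)$, $R_0$, $e_0$, and, for every $i \in [R_0]$, also fix $R_i$, $e_i$, and $q_{i,0}$. Inspecting the formula for $\hat{W}_1$ one sees that it then decomposes as a term in the outer variables only, plus a double sum whose $(i,j)$-summand depends \emph{only} on the triple $(f_{i,j}, q_{i,j}, p_{i,j})$. Equally importantly, the constraints
\begin{align*}
	f_{i,j} &\in [1-d, 1], &
	q_{i,j} &\in [e_i + f_{i,j} - 1 - d, e_i], &
	p_{i,j} &\in [q_{i,0} + q_{i,j} - e_i - d, q_{i,0}]
\end{align*}
are identical for every $j \in [R_i]$ and do not couple across different $j$.

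With this decoupling in hand, for each fixed $i$ the inner maximization over the family $(f_{i,j}, q_{i,j}, p_{i,j})_{j \in [R_i]}$ splits into $R_i$ identical copies of a single-triple subproblem: maximize
\begin{equation*}
	\phi_i(f,q,p) = \frac{1}{p} \paren{\frac{1}{q} \paren{\frac{1}{e_i} - \frac{1}{f}} + \frac{1}{q_{i,0}} \paren{\frac{1}{e_i} - \frac{1}{e_0}}}
\end{equation*}
over the common feasible region $\mathcal{D}_i \subseteq \R^3$ defined by the three intervals above. I would then invoke compactness of $\mathcal{D}_i$ together with continuity of $\phi_i$ (using that, under the codegree bound $d < 1/6$, every denominator appearing in $\phi_i$ stays strictly positive on the feasible region) to pick a maximizer $(f_i^\ast, q_i^\ast, p_i^\ast) \in \mathcal{D}_i$. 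Setting $(f_{i,j}, q_{i,j}, p_{i,j}) = (f_i^\ast, q_i^\ast, p_i^\ast)$ for all $j \in [R_i]$ and all $i \in [R_0]$ yields a symmetric feasible point that attains at least the same value of $\hat{W}_1$ as any other feasible point with the same outer variables. Taking a supremum over the outer variables afterwards preserves the symmetry and proves the claim.

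I do not expect any real obstacle here; this is the standard ``decouple and symmetrize coordinate-by-coordinate'' argument. The two small checks worth flagging are (a) that $\mathcal{D}_i$ genuinely does not depend on $j$, which is immediate from the form of the constraints in (P1), and (b) that the remaining outer constraints --- in particular the bounds $R_0 \leq e_0 \cdot v(H)$ and $R_i \leq q_{i,0} \cdot v(H)$ --- do not involve any of the doubly indexed variables, so replacing the original triples by the common optimizer $(f_i^\ast, q_i^\ast, p_i^\ast)$ never violates them.
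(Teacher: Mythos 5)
Your argument is correct and is essentially the paper's own proof: both exploit that, with the outer variables fixed, the $(i,j)$-summands decouple, share identical constraints, and can therefore all be replaced by a common optimal triple without losing feasibility or objective value. The only cosmetic difference is that the paper symmetrizes an already-chosen global maximizer $P_0$ by copying its best index $j_i$, while you re-solve the single-triple subproblem directly; both routes are valid.
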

\begin{proof}
	Since the domain of (P1) is closed and bounded and $\hat{W}_1$ is well-defined and continuous on the domain of (P1), we find that (P1) has a global maximum. Let $P_0$ be a point that achieves this maximum. For each $i$, let $j_i \in [R_i]$ be the index for which the inner term
	\begin{equation*}
		\frac{1}{p_{i,j}} \paren{\frac{1}{q_{i,j}} \paren{\frac{1}{e_i} - \frac{1}{f_{i,j}}}+ \frac{1}{q_{i,0}} \paren{\frac{1}{e_i} - \frac{1}{e_0}}} 
	\end{equation*}
	is maximized over all $j \in [R_i]$. Then the point $P_0'$ obtained from $P_0$ by setting 
	\begin{align*}
		f_{i,j} &= f_{i,j_i}, \\
		q_{i,j} &= q_{i,j_i}, \\
		p_{i,j} &= p_{i, j_i} 
	\end{align*}
	for all $i \in [R]$ and $j \in[R_i]$ is also a point that achieves this maximum. Moreover, since the constraints for $f_{i,j}$, $q_{i,j}$, $p_{i,j}$ are identical for each $j \in [R_i]$, it follows that $P_0'$ also satisfies the constraints of (P1) as desired.
\end{proof}

Letting $r_i = R_i / v(H)$, $f_{i,j} = f_{i,{j_i}}$, $p_i = p_{i,{j_i}}$, $q_i = q_{i,j_i}$, we form a new program (P2) with a new objective function that has the same optimum value as (P1):
\begin{equation*}
	\hat{W}_2 = \frac{e_0}{v(H)} \sumOver{i}{1}{R_0} \bigg( \frac{1}{q_{i,0}} \paren{\frac{1}{e_i} - \frac{1}{e_0}}  + r_i \bigg(\frac{1}{p_i} \paren{\frac{1}{q_i} \paren{\frac{1}{e_i} - \frac{1}{f_i}} + \frac{1}{q_{i,0}} \paren{\frac{1}{e_i} - \frac{1}{e_0}}}\bigg)\bigg).
\end{equation*}

Thus, (P2) reads:\\ \\
\emph{Maximize $\hat{W}_2$ such that for all $i \in [R_0]$}

\begin{align*}
	e_0 &\in [1-d, 1],
	& e_i &\in [1-d, 1], \\
	f_{i} &\in [1-d, 1] 
	& q_{i,0} &\in \ofS{e_0 + e_i - 1- d, e_0},  \\
	q_{i} &\in
	\ofS{e_i + f_{i} - 1- d, e_i}, 
	& p_{i} &\in \ofS{q_{i,0} + q_{i} - e_i - d, q_{i,0}}, \\
	R_0 &\in \ofS{\frac{1}{2}\cdot v(H), e_0 \cdot v(H)}, 
	& r_i &\in \ofS{0, q_{i,0}}.
\end{align*}
\begin{corollary}
	$\OPT(\text{P1}) = \OPT(\text{P2})$. 
\end{corollary}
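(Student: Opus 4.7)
The plan is a two-direction argument: each program's feasible set embeds into the other's with matching objective value. The heavy lifting has already been done by the preceding Lemma, which symmetrized the (P1)-optimum; the corollary is then essentially a bookkeeping exercise.

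For $\OPT(\text{P1}) \leq \OPT(\text{P2})$, I would take the maximizer $P_0'$ of (P1) produced by the Lemma, in which $f_{i,j}$, $q_{i,j}$, $p_{i,j}$ depend only on $i \in [R_0]$. I then define the (P2)-variables by $f_i := f_{i,1}$, $q_i := q_{i,1}$, $p_i := p_{i,1}$, and $r_i := R_i/v(H)$. The constraint $r_i \in [0, q_{i,0}]$ is an immediate restatement of $R_i \in [0, q_{i,0} v(H)]$, and the remaining constraints of (P2) are identical in form to those of (P1). Since the inner summand of $\hat{W}_1$ no longer depends on $j$, the inner factor $\tfrac{1}{v(H)} \sum_{j=1}^{R_i}(\cdot)$ collapses to $r_i \cdot (\cdot)$, matching $\hat{W}_2$ term by term.

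For the reverse inequality $\OPT(\text{P2}) \leq \OPT(\text{P1})$, I would invert this construction. Given any feasible point of (P2), set $R_i := r_i \cdot v(H)$ and then lift $f_{i,j} := f_i$, $q_{i,j} := q_i$, $p_{i,j} := p_i$ for every $j \in [R_i]$. All constraints of (P1) that depend on $j$ reduce to the corresponding constraints of (P2) under this assignment, and the same algebraic identity yields $\hat{W}_1 = \hat{W}_2$ at the lifted point. The only potential concern is that $R_i$ notionally corresponds to the cardinality of a vertex subset and hence should be an integer, whereas $r_i \cdot v(H)$ may not be; however, (P1) is already phrased with $R_0, R_i$ ranging over continuous intervals $[\tfrac12 v(H), e_0 v(H)]$ and $[0, q_{i,0} v(H)]$, so this requires no further argument. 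This minor continuous-vs-discrete mismatch is the only step where one might pause, but it is resolved by the program's own formulation.

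Combining the two inequalities yields $\OPT(\text{P1}) = \OPT(\text{P2})$ as claimed.
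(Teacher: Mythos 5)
Your proof is correct and follows essentially the same route as the paper, which treats the corollary as immediate from the preceding Lemma together with the substitution $r_i = R_i/v(H)$ collapsing the inner sum. Your explicit two-direction bookkeeping (including the remark on the continuous treatment of $R_i$) is just a more careful writing-out of what the paper leaves implicit.
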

\begin{lemma}
	The maximum value of (P2) is achieved by a point where for all $i, i' \in [R_0]$
	\begin{equation*}
		e_i = e_{i'}, f_i = f_{i'}, q_{i,0} = q_{i', 0}, q_i = q_{i'}, p_i = p_{i'}, r_i = r_{i'}. 
	\end{equation*}
\end{lemma}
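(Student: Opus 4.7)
The plan is to mirror the symmetrization scheme used for the previous lemma, now applied to the outer index $i$ rather than the inner index $j$. Since the feasible region of (P2) is compact and $\hat{W}_2$ is continuous on it, a maximizer $P_0$ exists. The structural observation I would rely on is that $\hat{W}_2$ decomposes as a sum $\sum_{i=1}^{R_0} T_i$, where
\[
T_i \;=\; \frac{e_0}{v(H)}\left[\frac{1}{q_{i,0}}\paren{\frac{1}{e_i}-\frac{1}{e_0}} + r_i \cdot \frac{1}{p_i}\paren{\frac{1}{q_i}\paren{\frac{1}{e_i}-\frac{1}{f_i}} + \frac{1}{q_{i,0}}\paren{\frac{1}{e_i}-\frac{1}{e_0}}}\right]
\]
depends only on the block of variables $(e_i, f_i, q_{i,0}, q_i, p_i, r_i)$ together with the shared quantities $e_0$ and $v(H)$. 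Crucially, the constraints bounding this block depend only on $e_0$ and $d$, so the feasible set for each index $i$ is the same subset of $\R^6$; there is no direct coupling between blocks at different indices.

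Next, I would pick $i^\ast \in [R_0]$ maximizing $T_i$ over $i \in [R_0]$ and form $P_0'$ by overwriting, for every $i \in [R_0]$,
\[
(e_i, f_i, q_{i,0}, q_i, p_i, r_i) \;:=\; (e_{i^\ast}, f_{i^\ast}, q_{i^\ast,0}, q_{i^\ast}, p_{i^\ast}, r_{i^\ast}),
\]
leaving $e_0$ and $R_0$ unchanged. Since each block constraint has identical form across $i$, feasibility of $P_0'$ follows from feasibility of $P_0$ at the single index $i^\ast$. Evaluating the objective gives $\hat{W}_2(P_0') = R_0 \cdot T_{i^\ast} \geq \sum_{i=1}^{R_0} T_i = \hat{W}_2(P_0)$, and equality must hold by maximality of $P_0$. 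Hence $P_0'$ is a maximizer that already displays the desired symmetry across $i$.

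I do not anticipate a serious obstacle here, as the argument is essentially a reindexed copy of the previous symmetrization lemma. The only mild verification to carry out is that no cross-block coupling has been overlooked: in particular, one should confirm that $R_0 \in [v(H)/2, e_0 \cdot v(H)]$ only references the shared variables $e_0$ and $v(H)$, and that $r_i \in [0, q_{i,0}]$ references only the variables of block $i$. Both of these are immediate from the statement of (P2), so the proof will go through cleanly.
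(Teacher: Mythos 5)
Your proposal is correct and matches the paper's proof essentially verbatim: both take a maximizer, select the index $i^\ast$ (the paper calls it $i'$) whose summand is largest, copy that block to all indices, and use the identical form of the per-block constraints to conclude feasibility. The only cosmetic difference is that you explicitly verify the absence of cross-block coupling, which the paper leaves implicit.
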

\begin{proof}
	Again, let $P_0$ be a point that achieves this maximum. Let $i' \in [R_0]$ be the index for which the inner term
	\begin{equation*}
		\frac{1}{q_{i,0}} \paren{\frac{1}{e_i} - \frac{1}{e_0}}  + r_i \bigg(\frac{1}{p_i} \paren{\frac{1}{q_i} \paren{\frac{1}{e_i} - \frac{1}{f_i}} + \frac{1}{q_{i,0}} \paren{\frac{1}{e_i} - \frac{1}{e_0}}}\bigg)
	\end{equation*}
	is maximized over all $i \in [R_0]$. Then the point $P_0'$ obtained from $P_0$ by setting 
	\begin{align*}
		e_{i} &= e_{i'}, 
		&f_i &= f_{i'}, \\
		q_{i,0} &=q_{i', 0}, 
		&q_i &= q_{i'}, \\
		p_i &= p_{i'}, 
		&r_i &= r_{i'}
	\end{align*}
	for all $i \in [R]$ is also a point that achieves this maximum. Moreover, since the constraints for $e_i$, $f_i$, $q_{i,0}$, $q_i$, $p_i $, and $r_i$ are identical for each $i \in [R_0]$, it follows that $P_0'$ also satisfies the constraints of (P2) as desired.
\end{proof}
Letting $r_0 = R_0 / v(H)$, $e = e_{i'}$, $f = f_{i'}$, $q_0 = q_{i',0}$, $q = q_{i'}$, $p = p_{i'}$, $r = r_{i'}$, we form a new program (P3) with a new objective function, but the same optimum value as (P2):
\begin{equation*}
	\hat{W}_3 = e_0 \cdot r_0 \bigg( \frac{1}{q_0} \paren{\frac{1}{e} - \frac{1}{e_0}}  + r \bigg(\frac{1}{p} \paren{\frac{1}{q} \paren{\frac{1}{e} - \frac{1}{f}} + \frac{1}{q_0} \paren{\frac{1}{e} - \frac{1}{e_0}}}\bigg)\bigg).
\end{equation*}

Thus, (P3) reads: \\ \\
\emph{Maximize $\hat{W}_3(e_0, e, f, q_0, q, p, r_0, r)$ subject to}
\begin{align*}
	e_0 &\in [1-d, 1],
	& e &\in [1-d, 1], \\
	f &\in [1-d, 1] 
	& q_0 &\in \ofS{e_0 + e - 1- d, e_0},  \\
	q &\in
	\ofS{e + f - 1- d, e}, 
	& p &\in \ofS{q_0 + q - e - d, q_{0}}, \\
	r_0 &\in \ofS{\frac{1}{2}, e_0}, 
	& r &\in \ofS{0, q_{0}}.
\end{align*}
\begin{corollary}
	$\OPT(\text{P1}) = \OPT(\text{P3})$. 
	\label{cor:8var}
\end{corollary}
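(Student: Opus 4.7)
The plan is to chain the two preceding reductions. The first corollary already gives $\OPT(\text{P1}) = \OPT(\text{P2})$, so it suffices to establish $\OPT(\text{P2}) = \OPT(\text{P3})$, and then combine the two equalities.

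For the inequality $\OPT(\text{P2}) \leq \OPT(\text{P3})$, I would invoke the second lemma, which states that the maximum of (P2) is attained at a point where all of $e_i, f_i, q_{i,0}, q_i, p_i, r_i$ are independent of $i \in [R_0]$. Denoting the common values by $e, f, q_0, q, p, r$ and setting $r_0 = R_0 / v(H)$, substituting into
\[
\hat{W}_2 = \frac{e_0}{v(H)} \sumOver{i}{1}{R_0} \bigg( \frac{1}{q_{i,0}} \paren{\frac{1}{e_i} - \frac{1}{e_0}}  + r_i \bigg(\frac{1}{p_i} \paren{\frac{1}{q_i} \paren{\frac{1}{e_i} - \frac{1}{f_i}} + \frac{1}{q_{i,0}} \paren{\frac{1}{e_i} - \frac{1}{e_0}}}\bigg)\bigg)
\]
collapses the sum into $R_0$ identical summands, so $\hat{W}_2 = e_0 \cdot r_0 \cdot (\text{inner expression}) = \hat{W}_3$. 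The constraints on the common values match those in (P3), since each interval $[1-d,1]$, $[e_0 + e - 1 - d, e_0]$, etc.\ in (P2) translates directly, and $r_0 \in [1/2, e_0]$ corresponds to the bound $R_0 \in [v(H)/2, e_0 \cdot v(H)]$ after dividing by $v(H)$.

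For the reverse inequality $\OPT(\text{P3}) \leq \OPT(\text{P2})$, I would go in the opposite direction: given any feasible point $(e_0, e, f, q_0, q, p, r_0, r)$ of (P3), define a feasible point of (P2) by setting $R_0 = r_0 \cdot v(H)$ and, for each $i \in [R_0]$, assigning $e_i = e$, $f_i = f$, $q_{i,0} = q_0$, $q_i = q$, $p_i = p$, $r_i = r$. The same computation as above shows the objective value is preserved. The only subtlety is that $R_0$ must be an integer — but since the underlying optimization is implicitly a limit as $v(H) \to \infty$ (via the $\limsup$ in the goal at the start of Section~\ref{sec:optimization}), the continuous relaxation of $r_0$ from integer multiples of $1/v(H)$ to the full interval $[1/2, e_0]$ does not change the optimum.

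I do not expect any genuine obstacle here; the corollary is essentially a bookkeeping conclusion that packages together the two symmetrization lemmas and the definition of (P3). The only point requiring care is the transition from the integer-valued parameter $R_0$ in (P2) to the continuous $r_0$ in (P3), which is harmless in the asymptotic regime relevant to Theorem~\ref{thm:main_fractional}.
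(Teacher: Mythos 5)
Your proposal is correct and matches the paper's (implicit) argument: the corollary is stated without proof precisely because it follows by chaining $\OPT(\text{P1})=\OPT(\text{P2})$ with the second symmetrization lemma and the re-parameterization defining (P3), which is exactly what you do. Your extra care about the reverse inequality and the discreteness of $R_0$ versus the continuous $r_0$ is a reasonable elaboration of a point the paper glosses over, not a different route.
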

At this point, the program (P3) would be small enough to be numerically solved by a commercial solver. 
Such an implementation agreeing with our result can be found in Section \ref{sec:fractional_code}. However, the advantage of our proof is that we get an exact solution for Theorem \ref{thm:main_fractional}. 
\subsection{Reduction to two variables}
\begin{theorem}
	\label{thm:W4}
	$\OPT(\text{P3}) = \OPT(\text{P4})$ where (P4) is defined as follows: 
	\begin{align*}
		\text{\emph{Maximize}}& & \hat{W}_4(e_0, f) &=e_0^2 \paren{\frac{{\frac{1}{1- d} - \frac{1}{e_0}}}{e_0 -2d} + \frac{e_0 - 2d}{e_0 + f - 1 - 4d} \paren{\frac{{\frac{1}{1-d} - \frac{1}{f}}}{f -2d} + \frac{{\frac{1}{1-d} - \frac{1}{e_0}}}{e_0 - 2d}}} \\
		\text{\emph{subject to}}& & e_0 &\in [1-d, 1],\\ 
		& & f &\in [1-d, 1].
	\end{align*}
\end{theorem}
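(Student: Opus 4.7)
The plan is to optimize out six of the eight variables of (P3), leaving $e_0$ and $f$ as the only free parameters. Concretely, I will show that every maximizer of (P3) satisfies
\[
e = 1-d, \quad q_0 = e_0 - 2d, \quad q = f - 2d, \quad p = e_0 + f - 1 - 4d, \quad r = q_0, \quad r_0 = e_0,
\]
that is, $e$ is at its minimum, the triple $(q_0, q, p)$ is simultaneously at the lower bound enforced by $e = 1-d$, and $(r, r_0)$ are at their upper bounds. Substituting these values into $\hat W_3$ and simplifying reproduces the expression for $\hat W_4(e_0, f)$, giving the desired equality of optima.

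The first and most delicate step is eliminating $e$. Since $e$ enters $\hat W_3$ only through the terms $1/e$, a direct computation gives
\[
\frac{\partial \hat W_3}{\partial e} = -\frac{e_0 r_0}{e^2}\left(\frac{1}{q_0} + \frac{r}{p}\left(\frac{1}{q} + \frac{1}{q_0}\right)\right),
\]
which is strictly negative on the feasible region since $e_0 \geq 1-d > 0$, $r_0 \geq 1/2 > 0$, $r \geq 0$, and $q_0, q, p$ are all positive. Moreover, decreasing $e$ only relaxes the lower bounds on $q_0, q, p$, so the remainder of the point stays feasible under the perturbation. Hence every maximizer has $e = 1-d$.

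With $e = 1-d \leq \min(e_0, f)$, both $\alpha := 1/e - 1/e_0$ and $\beta := 1/e - 1/f$ are non-negative. Writing $\hat W_3 = e_0 r_0 \cdot B$ with
\[
B = \frac{\alpha}{q_0} + \frac{r}{p}\left(\frac{\beta}{q} + \frac{\alpha}{q_0}\right),
\]
routine differentiation shows $B$ is strictly decreasing in each of $q_0, q, p$ and strictly increasing in $r$. Because lowering $q_0$ and $q$ only further relaxes the lower bound $p \geq q_0 + q - e - d$, the three minima can be attained simultaneously, giving $q_0 = e_0 - 2d$, $q = f - 2d$, $p = e_0 + f - 1 - 4d$, and then $r = q_0$. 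Since $B \geq 0$ at this point, $\hat W_3 = e_0 r_0 \cdot B$ is linear and non-decreasing in $r_0$, forcing $r_0 = e_0$.

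The main obstacle is the book-keeping of the coupled bounds on $(q_0, q, p)$ and verifying that, as each prior variable is fixed to its extremal value, the partial derivative of $B$ in the next variable still has the correct sign on the whole remaining feasible region, rather than only at a particular point. Once that is confirmed, direct substitution of the six extremal values into $\hat W_3$ produces the algebraic expression defining $\hat W_4(e_0, f)$, establishing $\OPT(\text{P3}) = \OPT(\text{P4})$.
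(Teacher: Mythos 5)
Your proposal identifies the correct extremal point and the correct final formula, but the order in which you eliminate variables is the reverse of what the constraint structure permits, and the feasibility claims you use to justify that order are false. Concretely: you first set $e=1-d$ on the grounds that $\partial\hat W_3/\partial e<0$ and that ``decreasing $e$ only relaxes the lower bounds on $q_0,q,p$.'' But in (P3) the \emph{upper} bound on $q$ is $e$, and the \emph{lower} bound on $p$ is $q_0+q-e-d$; both tighten as $e$ decreases. So a maximizer with $q=e$ or $p=q_0+q-e-d$ (which is exactly where the maximizer sits) leaves the feasible region under your perturbation, and the partial-derivative computation does not establish $e=1-d$. Any repair requires co-moving $q$ and $p$ with $e$, at which point the sign of the net change depends on the sign of $1/e-1/f$, which can be negative before $e$ is pinned down --- this is precisely the case analysis the paper's ramp-function device $(\cdot)^+$ is introduced to avoid.

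A second gap of the same kind occurs in your step ``the three minima can be attained simultaneously \ldots and then $r=q_0$'': lowering $q_0$ while $r$ is still a free variable can violate the constraint $r\le q_0$, so you must either lower $r$ along with $q_0$ (which decreases the objective, making the net effect unclear) or substitute $r=q_0$ first. After that substitution the coefficient of the inner bracket is $q_0/p$, which is \emph{increasing} in $q_0$, so the objective is no longer obviously decreasing in $q_0$; one must first substitute $p=q_0+q-e-d$ and then observe that $q_0\mapsto q_0/(q_0+q-e-d)$ is decreasing because $q-e-d\le -d<0$ while $q_0>1/2$. The paper's proof does exactly this: it eliminates in the order $r_0,r,p,q,q_0,e$ (leaves to root of the dependency diagram), re-verifying monotonicity of the \emph{substituted} objective at each stage, and only at the very end concludes $e=1-d$, which retroactively discharges the ramp functions. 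You explicitly name this book-keeping as ``the main obstacle'' but do not carry it out, and the shortcuts you propose in its place do not go through as stated.
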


\begin{figure}[htb!]
	\centering
	\includegraphics{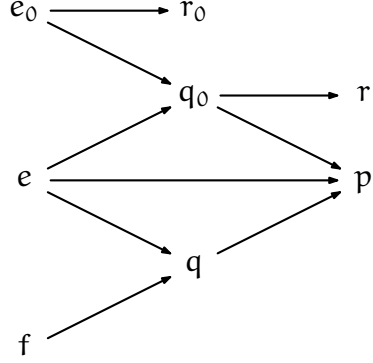}
	\caption{Dependency diagram of the variables' constraints in (P3)}
\end{figure}

\begin{proof}
	For a function $f$, let the \emph{ramp function of $f$} be $f^+ = \max(f, 0)$. Instead of $\hat{W}_3$, we will consider 
	\begin{align*}
		\hat{W}_4 = e_0 \cdot r_0 \left( \frac{1}{q_0} \paren{\frac{1}{e} - \frac{1}{e_0}}^+  + r \left(\frac{1}{p} \paren{\frac{1}{q} \paren{\frac{1}{e} - \frac{1}{f}}^+ + \frac{1}{q_0} \paren{\frac{1}{e} - \frac{1}{e_0}}^+}\right)\right)
	\end{align*}
	as our objective function. Note that $\hat{W}_4$ is also well-defined and continuous on the domain of (P3). In particular, if $e \leq e_0$, $e \leq f$, then $\smash{\hat{W}_4}$ and $\smash{\hat{W}_3}$ correspond for that point. 
	
	Consider first $r_0$. With the ramp functions, it is clear that the term with which $r_0$ is multiplied with is positive, meaning that $\hat{W}_4$ is monotonically increasing if all the remaining variables are fixed. So, to attain the maximum, we must have $r_0 = e_0$. By the same logic, $r = q_0$. To keep track of the substitutions, we let $\sigma = \set{r_0 \longrightarrow e_0, r \longrightarrow q_0}$ be the set of all subsitutions we have made.\footnote{$\sigma$ will be implicitly updated after each subsequent subsitution.} Plugging this in, we get for $\sigma(\hat{W}_4)$
	\begin{equation*}
		e_0^2 \paren{\frac{1}{q_0} \paren{\frac{1}{e} - \frac{1}{e_0}}^+ + q_0 \left(\frac{1}{p} \paren{\frac{1}{q} \paren{\frac{1}{e} - \frac{1}{f}}^+ + \frac{1}{q_0} \paren{\frac{1}{e} - \frac{1}{e_0}}^+}\right)}.
	\end{equation*}
	
	Again, by the same logic, we must have that $p$ is as small as possible, so $p = q_0 + q - e - d$. Updating $\sigma$ accordingly, we get for $\sigma(\hat{W}_4)$
	\begin{equation*}
		e_0^2 \paren{\frac{1}{q_0} \paren{\frac{1}{e} - \frac{1}{e_0}}^+ + q_0 \left(\frac{1}{q_0 + q - e -d} \paren{\frac{1}{q} \paren{\frac{1}{e} - \frac{1}{f}}^+ + \frac{1}{q_0} \paren{\frac{1}{e} - \frac{1}{e_0}}^+}\right)}.
	\end{equation*}
	
	Next, consider $q$. As $q_0 + q - e -d > 0$, we see that, for all other parameters fixed, the functions 
	\begin{align*}
		q &\longmapsto \frac{1}{q_0 + q - e-d}, & q &\longmapsto \paren{\frac{1}{q} \paren{\frac{1}{e} - \frac{1}{f}}^+ + \frac{1}{q_0} \paren{\frac{1}{e} - \frac{1}{e_0}}^+}
	\end{align*}
	are both positive and monotonically decreasing. Hence, we see that $\sigma(\hat{W}_4)$ as a whole is monotonically decreasing in $q$ with all other parameters fixed. Thus, we set $q = e + f - 1 - d$ which yields for $\sigma(\hat{W}_4)$ 
	\begin{equation*} 
		e_0^2 \paren{\frac{1}{q_0} \paren{\frac{1}{e} - \frac{1}{e_0}}^+ + \frac{q_0 }{q_0 + f - 1 -2d} \paren{\frac{\paren{\frac{1}{e} - \frac{1}{f}}^+}{e + f - 1 - d}  + \frac{1}{q_0} \paren{\frac{1}{e} - \frac{1}{e_0}}^+}}. 
	\end{equation*}
	
	Consider now $q_0$. Since $-1/2<-3d\leq f - 1 - 2d \leq -2d <0$ on the domain of $f$ and $$q_0 \geq e_0 + e - 1 - d \geq 1 - 3d > \frac{1}{2}$$ on the domain of $e_0$ and $e$ respectively, we have that 
	\begin{equation*}
		q_0 \longmapsto \frac{q_0}{q_0 + f - 1 - 2d}
	\end{equation*}
	is a positive, monotonically decreasing function for all choices of $e_0$, $e$ and $f$ in their respective domain. As the same thing is obviously true for $q_0 \longmapsto 1/ q_0$, we attain the maximum if $q_0$ is chosen as small as possible, i.e. $q_0 = e_0 + e - 1 - d$. This yields
	\begin{equation*}
		\sigma(\hat{W}_4) = e_0^2 \cdot \paren{\frac{\paren{\frac{1}{e} - \frac{1}{e_0}}^+}{e_0 + e - 1 -d} + \frac{(e_0 + e -  1 - d) \paren{\frac{\paren{\frac{1}{e} - \frac{1}{f}}^+}{e + f - 1 - d} + \frac{\paren{\frac{1}{e} - \frac{1}{e_0}}^+}{e_0 + e - 1 - d}}}{e_0 + e + f - 2 - 3d}}.
	\end{equation*}
	
	Finally, consider $e$. Clearly, by similar types of arguments as before, the functions 
	\begin{align*}
		e &\longmapsto \frac{\paren{\frac{1}{e} - \frac{1}{f}}^+}{e + f - 1 - d},  & e &\longmapsto \frac{\paren{\frac{1}{e} - \frac{1}{e_0}}^+}{e_0 + e - 1 - d}
	\end{align*}
	are non-negative and monotonically decreasing in $e$ for all $e_0, f$ as given by the domain. Furthermore, since $-1/2 < f - 1 - 2 d \leq - 2d < 0$ and $e_0 + e - 1 - d \geq 1 - 3d > 1/2$, the function 
	\begin{equation*}
		e \longmapsto \frac{e_0 + e - 1 - d}{e_0 + e + f - 2 - 3d} = \frac{e_0 + e - 1 - d}{(e_0 + e - 1 -d) + f - 1 - 2d} 
	\end{equation*}
	is positive and monotonically decreasing in $e$ for all $e_0, f$ as given by the domain. 
	
	Hence, we get an optimal solution for $e = 1 - d$. Because of this substitution, the ramp functions are now obsolete and we get 
	\begin{equation*}
		\sigma(\hat{W}_4) = e_0^2 \paren{\frac{{\frac{1}{1- d} - \frac{1}{e_0}}}{e_0 -2d} + \frac{e_0 - 2d}{e_0 + f - 1 - 4d} \paren{\frac{{\frac{1}{1-d} - \frac{1}{f}}}{f -2d} + \frac{{\frac{1}{1-d} - \frac{1}{e_0}}}{e_0 - 2d}}}.
	\end{equation*}
	This concludes the proof. 
\end{proof}
\begin{corollary}
	$\OPT(\text{P1}) = \OPT(\text{P4})$. 
\end{corollary}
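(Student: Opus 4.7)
The plan is to chain the two preceding results. Corollary \ref{cor:8var} already establishes $\OPT(\text{P1}) = \OPT(\text{P3})$ via the two symmetrization lemmas (first forcing $f_{i,j}, q_{i,j}, p_{i,j}$ to be constant in $j$, then forcing $e_i, f_i, q_{i,0}, q_i, p_i, r_i$ to be constant in $i$), and Theorem \ref{thm:W4} gives $\OPT(\text{P3}) = \OPT(\text{P4})$ via the step-by-step variable-elimination argument. Composing the two equalities immediately yields $\OPT(\text{P1}) = \OPT(\text{P4})$, so the only work is a short verification.

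The one conceptual point worth addressing is that Theorem \ref{thm:W4} is proved in terms of the auxiliary objective $\hat{W}_4$, obtained from $\hat{W}_3$ by replacing each factor $\frac{1}{e} - \frac{1}{e_0}$ and $\frac{1}{e} - \frac{1}{f}$ by its ramp function. I would remark that $\hat{W}_4 \geq \hat{W}_3$ pointwise on the (P3) domain, so $\OPT(\hat{W}_4) \geq \OPT(\text{P3})$, while the substitutions performed in the proof of Theorem \ref{thm:W4} arrive at an optimizer with $e = 1-d$, $q_0 = e_0 - 2d$, $q = f - 2d$, $p = e_0 + f - 1 - 4d$, $r_0 = e_0$, $r = q_0$. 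At that optimizer, the constraints $e_0, f \in [1-d, 1]$ force $\tfrac{1}{1-d} - \tfrac{1}{e_0} \geq 0$ and $\tfrac{1}{1-d} - \tfrac{1}{f} \geq 0$, so the ramp functions are active and $\hat{W}_4 = \hat{W}_3$ there. Hence the two maxima coincide, which is exactly what is recorded in Theorem \ref{thm:W4}.

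Given these observations, the proof is a one-line composition and there is no genuine obstacle. This corollary functions as the bookkeeping step that certifies the passage from the original high-dimensional program (P1) to the two-variable program (P4), setting up the final analytic maximization of $\hat{W}_4(e_0, f)$ over $[1-d, 1]^2$ as the remaining task for establishing Theorem \ref{thm:main_fractional}.
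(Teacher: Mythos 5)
Your proposal is correct and matches the paper: the corollary is simply the composition of Corollary \ref{cor:8var} ($\OPT(\text{P1}) = \OPT(\text{P3})$) with Theorem \ref{thm:W4} ($\OPT(\text{P3}) = \OPT(\text{P4})$). Your additional remark about the ramp functions is accurate but is already handled inside the proof of Theorem \ref{thm:W4}, where the substitution $e = 1-d$ makes the ramps obsolete.
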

\subsection{Reduction to one variable}
\begin{theorem}
	$\OPT(\text{P4}) = \OPT(\text{P5})$ where (P5) is defined as follows: 
	\begin{align*}
		\text{\emph{Maximize}}& & \hat{W}_5(f) &=\frac{\frac{1}{1-d} - 1}{1 - 2d} + \frac{1 - 2d}{f - 4d} \paren{\frac{\frac{1}{1-d} - \frac{1}{f}}{f - 2d} + \frac{\frac{1}{1-d} - 1}{1-2d}} \\
		\text{\emph{subject to}}& & f &\in [1-d, 1].
	\end{align*}
\end{theorem}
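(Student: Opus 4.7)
The plan is to show that for every fixed $f \in [1-d, 1]$ and $d \in [0, 1/6)$, the map $e_0 \longmapsto \hat{W}_4(e_0, f)$ is non-decreasing on $[1-d, 1]$. This forces the maximum of $\hat{W}_4$ to be attained at a point with $e_0 = 1$, and a direct substitution then verifies that $\hat{W}_4(1, f) = \hat{W}_5(f)$, yielding $\OPT(\text{P4}) = \OPT(\text{P5})$.

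To expose the monotonicity in $e_0$, I would first set $c = 1/(1-d)$ and use the identity $\tfrac{1}{1-d} - \tfrac{1}{x} = \tfrac{cx - 1}{x}$ (for $x > 0$) to rewrite, after distributing $e_0^2$,
\begin{equation*}
	\hat{W}_4(e_0, f) = \frac{e_0(ce_0 - 1)}{e_0 - 2d} + \frac{e_0(ce_0 - 1)}{e_0 + f - 1 - 4d} + \frac{(cf - 1)\,e_0^2(e_0 - 2d)}{f(f - 2d)(e_0 + f - 1 - 4d)}.
\end{equation*}
All three summands are non-negative on the feasible region: $ce_0, cf \geq 1$ since $e_0, f \geq 1-d$, and every denominator is positive (the last requires $d < 1/6$, which gives $e_0 + f - 1 - 4d \geq 1 - 6d > 0$).

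A direct quotient-rule calculation shows that each of the first two summands is individually non-decreasing in $e_0$. The derivative's numerator of the first is $ce_0^2 - 4cde_0 + 2d = ce_0(e_0 - 4d) + 2d$, positive since $e_0 \geq 1 - d > 4d$. For the second, the numerator is $c(e_0 + \beta)^2 - \beta(1 + c\beta)$ with $\beta := f - 1 - 4d < 0$, which is positive since $|c\beta| \leq 5d/(1-d) < 1$ for $d < 1/6$.

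The main obstacle is the third summand, which is \emph{not} individually non-decreasing in $e_0$: its $e_0$-dependent factor $e_0^2(e_0 - 2d)/(e_0 + f - 1 - 4d)$ has a derivative whose sign is governed by the quadratic $2e_0^2 - (3\gamma + 2d)e_0 + 4d\gamma$ with $\gamma := 1 + 4d - f \in [4d, 5d]$, and this can become negative for $d$ near $1/6$. The way out is to show that the positive derivative contributions from the first two summands dominate the (possibly negative) contribution from the third. I would carry this out by placing the whole expression over the common denominator $(e_0 - 2d)\,f(f-2d)(e_0 + f - 1 - 4d)$, differentiating the resulting rational function in $e_0$, and verifying non-negativity of the polynomial numerator of the derivative over the parameter region $[1-d, 1]^2 \times [0, 1/6)$. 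I expect this to be the technical heart of the proof: a low-degree polynomial inequality in $(e_0, f, d)$ whose verification is routine but tedious, exploiting the positivity of the first two terms' derivatives together with the fact that the coefficient $(cf - 1)$ of the problematic third term vanishes linearly as $f \to 1 - d$. Once monotonicity in $e_0$ is in hand, substituting $e_0 = 1$ into the displayed form of $\hat{W}_4$ reproduces exactly $\hat{W}_5(f)$ and completes the reduction.
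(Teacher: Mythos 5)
Your overall strategy coincides with the paper's: show that $e_0 \longmapsto \hat{W}_4(e_0, f)$ is non-decreasing for each fixed $f$, conclude that $e_0 = 1$ is optimal, and check that substituting $e_0 = 1$ yields $\hat{W}_5$. Your three-term rewriting of $\hat{W}_4$ is correct, your derivative computations for the first two summands are correct (the numerators $ce_0(e_0-4d)+2d$ and $c(e_0+\beta)^2 - \beta(1+c\beta)$ are indeed positive on the feasible region), and your diagnosis of the third summand is accurate: the quadratic $2e_0^2 - (3\gamma+2d)e_0 + 4d\gamma$ with $\gamma = 1+4d-f$ is negative at, e.g., $d = 1/6$, $\gamma = 5d$, $e_0 = 1-d$, so that term genuinely is not monotone on its own.

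The gap is that the decisive step --- showing that the positive derivative contributions of the first two summands dominate the possibly negative contribution of the third --- is only announced, not carried out, and it cannot be waved through as ``routine but tedious.'' The inequality is tight as $d \uparrow 1/6$: in the paper the analogous verification occupies most of the proof and reduces, after a careful two-point comparison, to the non-negativity on $[1-d,1]$ of the cubic $\eta(f) = f^3 + (2-11d)f^2 + (23d^2-3)f + (17d^3-23d^2+5d+1)$, which holds only because $\eta' > 0$ there and $\eta(1-d) = (1-d)(1-3d)(1-6d)$ --- a margin that vanishes as $d \to 1/6$. A brute-force ``common denominator, differentiate, check the polynomial numerator'' plan over $(e_0,f,d) \in [1-d,1]^2 \times [0,1/6)$ therefore requires a genuinely careful grouping of terms to succeed; the paper accomplishes this by bundling your second and third summands into the single function $\zeta(e_0) = e_0 \cdot \frac{e_0-2d}{e_0+f-1-4d}\paren{\frac{\frac{1}{1-d}-\frac{1}{f}}{f-2d} + \frac{\frac{1}{1-d}-\frac{1}{e_0}}{e_0-2d}}$ and proving $\zeta(x) \leq \zeta(y)$ for $x < y$ directly, rather than term by term. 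As written, your argument is complete for the two easy summands and for the final substitution $e_0 = 1$, but the technical heart of the theorem is missing.
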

\begin{proof}
	To prove the claim, we need to show that the choice $e_0=1$ is optimal. For this, we need to do a somewhat careful analysis. We will show, one by one, that the functions 
	\begin{align*}
		\varphi \colon [1-d,1] \longrightarrow \R_{\geq 0}, e_0 &\longmapsto e_0^2 \cdot  \frac{\frac{1}{1-d} - \frac{1}{e_0}}{e_0 - 2d} = e_0 \cdot \frac{e_0 - (1-d)}{(1-d) (e_0 -2d)}, \\
		\zeta \colon [1-d,1] \longrightarrow \R_{\geq 0}, e_0 &\longmapsto e_0 \cdot \frac{e_0 - 2d}{e_0 + f - 1 - 4d} \cdot \paren{\frac{\frac{1}{1-d} - \frac{1}{f}}{f - 2d} + {\frac{\frac{1}{1-d} - \frac{1}{e_0}}{e_0 - 2d}}}
	\end{align*}
	are monotonically increasing for any fixed $f \in [1-d, 1]$. Then, $e_0 = 1$ would indeed be the optimal choice. 
	
	For $\varphi$, it suffices to show that $\ln(\varphi)\colon [1-d, 1] \longrightarrow [-\infty, \infty)$ is monotonically increasing in $e_0$ for fixed $d \in [0,1/6)$. This gives 
	\begin{equation*}
		\ln(e_0) + \ln(e_0 - 1 + d) - \ln(1-d) - \ln(e_0 - 2d).
	\end{equation*}
	
	Taking the derivative with respect to $e_0$, we get 
	\begin{align*}
		\frac{1}{e_0} + \frac{1}{e_0 - 1 +d} - \frac{1}{e_0 - 2d} &\geq 1 + \frac{1}{1 - 1 + d } - \frac{1}{(1-d) - 2 d} \tag{$e_0 \in [1-d , 1]$}\\
		&\geq 1 + \frac{1}{d} - \frac{1}{1 - 3d} \\
		&> 5. \tag{$d \in [0, 1/6)$}
	\end{align*}
	
	As the derivative is positive, $\ln(\varphi)$ and therefore $\varphi$ is strictly monotonically increasing. 
	
	For $\zeta$, let $1-d \leq x < y \leq 1$. We want to show that 
	\begin{equation*}
		\frac{x (x - 2d)}{x + f - 1 - 4d} \cdot \paren{\frac{\frac{1}{1-d} - \frac{1}{f}}{f - 2d} + {\frac{\frac{1}{1-d} - \frac{1}{x}}{x - 2d}}} 
		\leq 
		\frac{y(y - 2d)}{y + f - 1 - 4d} \cdot \paren{\frac{\frac{1}{1-d} - \frac{1}{f}}{f - 2d} + {\frac{\frac{1}{1-d} - \frac{1}{y}}{y - 2d}}}.
	\end{equation*}
	
	Shuffling terms around, this is equivalent to 
	\begin{align*}
		& \paren{ \frac{x(x - 2d)}{x + f - 1 - 4d} - \frac{y(y - 2d)}{y + f - 1 - 4d}} \cdot \frac{\frac{1}{1-d} - \frac{1}{f}}{f - 2d}\\
		={}&  \paren{\frac{x(x - 2d)}{x + f - 1 - 4d} - \frac{y(y - 2d)}{y + f - 1 - 4d}} \cdot \frac{f - (1-d)}{(1-d)f(f - 2d)} \\
		\leq{}&  \frac{y(y - 2d)}{y + f - 1 - 4d} \cdot \frac{\frac{1}{1-d} - \frac{1}{y}}{y - 2d} - \frac{x(x - 2d)}{x+f - 1 - 4d} \cdot \frac{\frac{1}{1-d} - \frac{1}{x}}{x - 2d} \\
		={}& \frac{1}{1-d} \paren{\frac{y - (1-d)}{y + f -1 -4d} - \frac{x - (1-d)}{x + f - 1 - 4d}}. 
	\end{align*}
	
	We may multiply by $(1-d)(x + f - 1 - 4d)(y + f -1 -4d) > 0$ to get 
	\begin{align*}
		& \paren{x (x-2d) (y + f - 1 -4d) - y (y - 2 d) (x+ f -1 - 4d)} \cdot \frac{f - (1-d)}{f (f-2d)} \\
		={}& \paren{x -  y} \paren{8d^2 + 2d (1-f) - ((1 - f) + 4 d ) (x+y) + xy} \cdot \frac{f - (1-d)}{f (f-2d)} \\
		\leq{}& \paren{y - (1-d)} (x+ f-1 -4d) - (x - (1-d)) (y + f - 1 - 4 d) \\
		={}& (y - x)(f- 1 - 4d) - (1-d) (x - y) \\
		={}& (y- x)(f - 5d) .
	\end{align*} 
	
	Hence, dividing by $(y-x) > 0$ and multiplying by $f (f-2d) > 0$, it suffices to show
	\begin{align*}
		\paren{((1-f) + 4d)(x+y) -  \paren{8d^2 +2d(1-f) +xy}}(f-(1-d)) \leq f (f - 2d) (f - 5d). \tag{$\ast$}
	\end{align*}
	
	Note that 
	\begin{align*}
		& ((1-f) + 4d)(x+y) -  \paren{8d^2 +2d(1-f) +xy} \\
		={}& \underbrace{((1-f) + 4d - x )}_{< 0} y + ((1-f) + 4d) x - 8d^2 -2d(1-f) \\
		\leq{}&  2((1-f) + 4d) x - x^2 - 8d^2 -2d(1-f),
	\end{align*}
	where we used the assumption $y > x$ for the last inequality. Differentiating\footnote{For fixed $d \in [0,1/6)$ and $f \in [1-d, 1]$.} with respect to $x \in [1-d, 1]$, we obtain 
	\begin{align*}
		2 \paren{(1-f) + 4d} - 2 x = 2\paren{1 + 4d - (x+f)} \leq 2 \paren{1 + 4d - 2 (1-d)} = 2 \paren{6d-1} < 0.
	\end{align*}
	
	Thus, since $f - (1-d) \geq 0$, the left hand side of ($\ast$) is bounded by 
	\begin{align*}
		& \paren{2 \paren{(1-f) + 4d}(1-d)  - (1-d)^2 - 8d^2 - 2d (1-f)} \paren{f- (1-d)} \\
		={}& \paren{2 (2d - 1) f + 1 +6d-17d^2} \paren{f -(1-d)}
	\end{align*}
	
	Hence, we may show that $\eta \colon [1-d,1] \longrightarrow \R$ defined by 
	\begin{equation*}
		f \longmapsto  f (f - 2d) (f - 5d) - \paren{2 (2d - 1) f + 1 +6d-17d^2} \paren{f -(1-d)}
	\end{equation*}
	is non-negative for all $d \in [0, 1/6)$. Note that
	$\eta$ expands out to 
	\begin{equation*}
		\eta(f) = f^3 + \paren{2 - 11d} f^2 + \paren{23d^2 - 3} f + \paren{17d^3 - 23d^2 + 5d + 1}
	\end{equation*}
	
	It follows that the derivative of $\eta$ is 
	\begin{align*}
		\eta'(f) &= 3f^2 + \underbrace{ \paren{4-22d}}_{> 0} f + \paren{23d^2 - 3} \\
		&\geq 3 \paren{1-d}^2 + \paren{4-22d} \paren{1-d} + \paren{23d^2 - 3} \\
		&= 4 \paren{1 - 2d}\paren{1 - 6d} \\
		&> 0.
	\end{align*}
	
	As $d < 1/6$, $\eta'(f)$ is positive, meaning that $\eta$ strictly monotonically increasing. Hence, $\eta$ is indeed non-negative as 
	\begin{equation*}
		\eta(1-d) = (1-d) (1-3d) (1-6d) > 0.
	\end{equation*}
	
	Thus, we have shown that $\zeta$ is monotonically increasing in $e_0$, showing that $e_0 = 1$ yields the optimal value for (P4). Plugging that in gives the objective function of (P5). 
\end{proof}
\begin{corollary}
	$\OPT(\text{P1}) = \OPT(\text{P5})$. 
\end{corollary}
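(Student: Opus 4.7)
The statement is the final corollary in the chain of reductions, asserting $\OPT(\text{P1}) = \OPT(\text{P5})$. Since the target is an equality of optima and all the intermediate equalities have already been explicitly proved in the preceding subsections, my plan is simply to chain them together; there is no new content to introduce here.

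Concretely, I would invoke the three symmetrization / substitution results already in hand. First, the two lemmas on symmetrizing $(f_{i,j}, q_{i,j}, p_{i,j})$ across $j$ and then $(e_i, f_i, q_{i,0}, q_i, p_i, r_i)$ across $i$, together with the substitutions $r_i = R_i / v(H)$ and then $r_0 = R_0 / v(H)$, give $\OPT(\text{P1}) = \OPT(\text{P2}) = \OPT(\text{P3})$, which is exactly Corollary \ref{cor:8var}. Second, Theorem \ref{thm:W4} provides $\OPT(\text{P3}) = \OPT(\text{P4})$ by successively pushing $r_0, r, p, q, q_0$ and $e$ to their extremal feasible values (using the ramp-function trick to handle the possibility $e > e_0$ or $e > f$ without breaking well-definedness). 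Third, the theorem immediately preceding this corollary gives $\OPT(\text{P4}) = \OPT(\text{P5})$ by showing that $e_0 = 1$ is optimal via the monotonicity of $\varphi$ and $\zeta$ in $e_0$.

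Transitivity of equality then yields
\[
\OPT(\text{P1}) \;=\; \OPT(\text{P3}) \;=\; \OPT(\text{P4}) \;=\; \OPT(\text{P5}),
\]
which is the claim. No obstacle remains, since each individual equality has been verified with matching objective functions and matching feasible regions in the preceding statements; the corollary is purely bookkeeping and is included so that the final one-variable program (P5), which will be solved in closed form in the next subsection to give $1 - x^\ast$, can be directly identified with the maximum of $w_{H,1}(O)$ that governs Theorem \ref{thm:main_fractional}.
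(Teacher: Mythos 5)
Your proof is correct and matches the paper's (implicit) argument exactly: the corollary is pure transitivity, chaining Corollary \ref{cor:8var} ($\OPT(\text{P1}) = \OPT(\text{P3})$), Theorem \ref{thm:W4} ($\OPT(\text{P3}) = \OPT(\text{P4})$), and the immediately preceding theorem ($\OPT(\text{P4}) = \OPT(\text{P5})$), which is why the paper states it without further proof. Nothing is missing.
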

\subsection{The final optimization}
Despite reducing our optimization program to only one variable, solving (P5) for general $d \in [0, 1/6)$ remains somewhat tedious, especially by hand. Nonetheless, we will be able to determine the optimal $d$ for which $\OPT(\text{P5}) \leq 1$. In particular, the remainder of this section hopefully clarifies the appearance of $x^\ast$ in Theorem \ref{thm:main_fractional}. To do this, the following proposition will prove to be crucial:
\begin{proposition}
	\label{prop:root}
	The polynomial $p(x) = 8 x^3 - 22x^2 + 10 x - 1$ has a unique root $x^\ast\approx 0.1421657737$ within the interval $[0,1/6]$. In particular, $p(x) < 0$ for $0 \leq x < x^\ast$ and $p(x) > 0$ for $x^\ast < x \leq 1/6$. 
\end{proposition}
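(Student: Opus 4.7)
The plan is to prove existence of a root by the intermediate value theorem, then uniqueness by checking that $p$ is strictly increasing on the interval $[0, 1/6]$.

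First, I would evaluate $p$ at the endpoints: $p(0) = -1 < 0$, while a short computation (common denominator $54$) gives
\begin{equation*}
p(1/6) = \tfrac{1}{27} - \tfrac{11}{18} + \tfrac{5}{3} - 1 = \tfrac{2 - 33 + 90 - 54}{54} = \tfrac{5}{54} > 0.
\end{equation*}
Since $p$ is a polynomial, it is continuous, so the intermediate value theorem yields some $x^\ast \in (0, 1/6)$ with $p(x^\ast) = 0$.

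Next, for uniqueness and the sign statement, I would show that $p$ is strictly monotonically increasing on $[0, 1/6]$ by analyzing its derivative $p'(x) = 24x^2 - 44x + 10$. This is a parabola opening upwards with roots at $x = \frac{44 \pm \sqrt{44^2 - 4 \cdot 24 \cdot 10}}{48} = \frac{11 \pm \sqrt{61}}{12}$. Since $\sqrt{61} < 8$, both roots are at least $\frac{11 - 8}{12} = \frac{1}{4} > \frac{1}{6}$, so $p'$ has no zero in $[0, 1/6]$. Combined with $p'(0) = 10 > 0$, this means $p' > 0$ throughout $[0, 1/6]$, and hence $p$ is strictly increasing on this interval.

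From strict monotonicity together with $p(0) < 0 < p(1/6)$, the root $x^\ast$ in $[0, 1/6]$ is unique, $p(x) < 0$ for $0 \le x < x^\ast$, and $p(x) > 0$ for $x^\ast < x \le 1/6$. The approximate numerical value $x^\ast \approx 0.1421657737$ can be obtained either by Cardano's formula applied to $p$ or by a few steps of bisection/Newton's method starting from the interval $[0, 1/6]$; I would simply cite this as a numerical verification since only the existence and location interval are used elsewhere in the paper. The only mildly subtle point is locating the critical points of $p'$ precisely enough to confirm they lie outside $[0, 1/6]$, but the bound $\sqrt{61} < 8$ (equivalently $61 < 64$) handles this cleanly without invoking any numerical approximation.
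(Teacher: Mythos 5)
Your proposal is correct and follows essentially the same route as the paper: establish $p(0)=-1<0$ and $p(1/6)=5/54>0$, then conclude uniqueness and the sign pattern from strict monotonicity of $p$ on $[0,1/6]$. The only difference is cosmetic — the paper bounds $p'(x)=24x^2-44x+10\geq -44/6+10>0$ directly, while you locate the roots of $p'$ via $\sqrt{61}<8$; both are valid.
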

\begin{proof}
	To prove the uniqueness, note that for $x \in [0, 1/6]$ 
	\begin{equation*}
		p'(x) = 24 x^2 - 44 x + 10 \geq - \frac{44}{6} + 10 > 0.
	\end{equation*}
	
	Hence, $p$ is strictly, monotonically increasing on the interval $[0,1/6]$. Since, $p(0) = -1$ and $p(1/6) = 5/54 > 0$, this concludes the proof. 
\end{proof}

\begin{lemma}
	\label{lem:W5_iff}
	For $d \in [0,1/6)$, we have  
	\begin{equation*}
		\hat{W}_5(1) \leq 1 \iff d \leq x^\ast.
	\end{equation*} 
	In particular, $\hat{W}_5(1) = 1$ if and only if $d = x^\ast$. 
\end{lemma}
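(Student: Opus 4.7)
The plan is to directly compute $\hat{W}_5(1)$, simplify it into a single rational expression in $d$, and reduce the inequality $\hat{W}_5(1) \leq 1$ to the polynomial inequality $p(d) \leq 0$, at which point Proposition \ref{prop:root} finishes the argument.

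First I would substitute $f = 1$ into the definition of $\hat{W}_5$. Using that $\tfrac{1}{1-d} - 1 = \tfrac{d}{1-d}$ and $\tfrac{1}{1-d} - \tfrac{1}{f} = \tfrac{d}{1-d}$ when $f=1$, the two summands in the inner bracket become equal, and $f-2d = 1-2d$, $f-4d = 1-4d$. A short rearrangement should yield
\begin{equation*}
\hat{W}_5(1) = \frac{d}{(1-d)(1-2d)} + \frac{2d}{(1-d)(1-4d)} = \frac{d\bigl((1-4d)+2(1-2d)\bigr)}{(1-d)(1-2d)(1-4d)} = \frac{d(3-8d)}{(1-d)(1-2d)(1-4d)}.
\end{equation*}

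Next I would observe that on the range $d \in [0,1/6)$ every factor in the denominator is strictly positive (since $d < 1/6 < 1/4$), and also $3-8d > 0$, so both sides of the desired inequality are non-negative and cross-multiplication is valid. The inequality $\hat{W}_5(1) \leq 1$ is therefore equivalent to
\begin{equation*}
d(3-8d) \leq (1-d)(1-2d)(1-4d).
\end{equation*}
Expanding the right-hand side gives $1 - 7d + 14d^2 - 8d^3$, so after moving everything to one side the inequality simplifies to $8d^3 - 22d^2 + 10d - 1 \leq 0$, i.e.\ $p(d) \leq 0$.

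By Proposition \ref{prop:root}, $p$ is strictly increasing on $[0,1/6]$ with $p(x^\ast) = 0$, so $p(d) \leq 0$ if and only if $d \leq x^\ast$, and $p(d) = 0$ exactly at $d = x^\ast$. This gives both the biconditional and the equality statement. The only potential pitfall is an algebra mistake in the expansion of $(1-d)(1-2d)(1-4d)$ or in combining the two summands over a common denominator; since everything is a routine polynomial manipulation, I do not expect any real obstacle.
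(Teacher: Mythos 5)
Your proposal is correct and follows essentially the same route as the paper: substitute $f=1$, combine the terms into a single rational function of $d$, and reduce $\hat{W}_5(1)\leq 1$ to the sign of $p(d)=8d^3-22d^2+10d-1$ via Proposition \ref{prop:root}. The only cosmetic difference is that you keep the denominator in the factored positive form $(1-d)(1-2d)(1-4d)>0$, whereas the paper writes it as $8d^3-14d^2+7d-1<0$ and flips the inequality when dividing; the algebra checks out either way.
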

\begin{proof}
	Consider the assignment $f = 1$. We then have 
	\begin{align*}
		\hat{W}_5 (1) 
		= \frac{\frac{1}{1-d} - 1}{1 - 2d} + \frac{1-2d}{1 - 4 d} \paren{\frac{\frac{1}{1-d} - 1}{1 -2d} + \frac{\frac{1}{1-d} - 1}{1-2d}} 
		= \frac{8 d^2 - 3d}{8d^3 - 14d^2 + 7d - 1}. 
	\end{align*}
	
	Note that $8 d^3 - 14 d^2 + 7 d - 1 = (d - 1) (2d-1) (4d-1) < 0$ since $d \in [0, 1/6]$. However, by Proposition \ref{prop:root}, we know that 
	\begin{alignat*}{5}
		d > x^\ast &\iff & \ 8d^3 &> 22d^2 - 10 d + 1 &&\iff & \
		8d^3 -14d^2 + 7d -1 &> 8d^2 -3d \\
		&\iff & 1 &<\frac{8d^2 -3d}{8d^3 -14d^2 + 7d -1} &&\iff & \hat{W}_5(1) &> 1, 
	\end{alignat*}
	where we used $8 d^3 - 14 d^2 + 7 d - 1 < 0$ for the third equivalence. In particular, these equivalences also hold if every \enquote{$<$} and \enquote{$>$} is replaced by \enquote{$=$}. 
\end{proof}
\begin{corollary}
	We have $\OPT(\text{P5}) > 1$ for all $d > x^\ast$.
\end{corollary}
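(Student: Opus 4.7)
The plan is to recognize that this corollary is essentially immediate from Lemma \ref{lem:W5_iff}. That lemma already shows the \emph{equivalence} $\hat{W}_5(1) > 1 \iff d > x^\ast$, so the only thing left is to relate the single evaluation $\hat{W}_5(1)$ to the full optimum $\OPT(\text{P5})$.

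The key observation is that the feasible domain of (P5) is $f \in [1-d, 1]$, and this interval always contains the endpoint $f = 1$ for any $d \in [0, 1/6)$. Since (P5) is a maximization, the value attained at any feasible point provides a lower bound on $\OPT(\text{P5})$; in particular, $\OPT(\text{P5}) \geq \hat{W}_5(1)$. Chaining this with Lemma \ref{lem:W5_iff}, for any $d > x^\ast$ (with $d < 1/6$, a nonempty range by Proposition \ref{prop:root}) we get
\begin{equation*}
    \OPT(\text{P5}) \;\geq\; \hat{W}_5(1) \;>\; 1,
\end{equation*}
which is precisely the claim.

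There is no genuine obstacle to overcome: the substantive calculation (handling the cubic $8d^3 - 14d^2 + 7d - 1$ and identifying the threshold $x^\ast$) has already been carried out in Proposition \ref{prop:root} and Lemma \ref{lem:W5_iff}. This corollary is simply the repackaging step that turns the pointwise statement about $\hat{W}_5(1)$ into the optimization statement about $\OPT(\text{P5})$ needed to justify the choice of $1 - x^\ast$ as the codegree threshold in Theorem \ref{thm:main_fractional}.
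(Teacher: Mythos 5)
Your proposal is correct and matches the paper's (implicit) reasoning exactly: the corollary is stated without proof precisely because $f=1$ is always feasible for (P5), so $\OPT(\text{P5}) \geq \hat{W}_5(1)$, and Lemma \ref{lem:W5_iff} gives $\hat{W}_5(1) > 1$ whenever $d > x^\ast$.
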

\begin{remark}
	It is interesting to note that $f = 1$ doesn't yield the maximum for $d > x^\ast$. This is not too hard to show and can be seen in Figure \ref{fig:W5}. However, for $d \leq x^\ast$ this assignment is indeed optimal.  
\end{remark}
\newcommand{\maxD}{38} %{100}

\begin{figure}
	\centering
	\begin{tikzpicture}
		\begin{axis}[
			domain=0.8:1,
			yrange=0:2,
			xlabel=$f$,
			ylabel={$\hat{W}_5(f)$}
			]
			\pgfplotsforeachungrouped \D in {1, ..., \maxD}
			{   
				\pgfmathtruncatemacro{\i}{\D / \maxD * 100}
				\edef\temp{\noexpand%
					\addplot [thick, domain=(1 - (\D / (6 * \maxD))):1, color=darkorange!\i!navyblue] {(1/(1 - (\D / (6 * \maxD))) - 1)/(1 - 2*(\D / (6 * \maxD))) + (1 - 2*(\D / (6 * \maxD)))*((1/(1 - (\D / (6 * \maxD))) - 1/x)/(-2*(\D / (6 * \maxD)) + x) + (1/(1 - (\D / (6 * \maxD))) - 1)/(1 - 2*(\D / (6 * \maxD))))/(-4*(\D / (6 * \maxD)) + x)};
				}\temp
			}
			\addplot [ultra thick, domain=0.8578342263:1, color=burgundy]{0.2315686746 + 0.7156684526*((1.165726395 - 1/x)/(-0.2843315474 + x) + 0.2315686746)/(-0.5686630948 + x)};
		\end{axis}
	\end{tikzpicture}
	\caption{$\hat{W}_5(f)$ for $\textcolor{navyblue}{d=0}$ to $\textcolor{darkorange}{d=1/6}$ and $\textcolor{burgundy}{d=x^\ast}$}
	\label{fig:W5}
\end{figure}
\begin{theorem}
	For $d \leq x^\ast$, $\OPT(\text{P5})$ is obtained by $f = 1$. In particular, 
	\begin{equation*}
		\OPT(\text{P5}) \leq 1
	\end{equation*}
	where equality holds if only if $d = x^\ast$. 
\end{theorem}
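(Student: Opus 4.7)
The plan is to establish the stronger statement that $\hat{W}_5(f) \leq \hat{W}_5(1)$ for all $f \in [1-d, 1]$ whenever $d \leq x^\ast$; combined with Lemma~\ref{lem:W5_iff}, this immediately yields both conclusions of the theorem.

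The first step is to rewrite $\hat{W}_5(f) = A + \tfrac{1-2d}{f-4d}\bigl(B(f) + A\bigr)$ where $A := \tfrac{d}{(1-d)(1-2d)}$ is constant in $f$ and $B(f) := \tfrac{f - (1-d)}{(1-d)\,f\,(f-2d)}$ satisfies $B(1) = A$. After clearing the common denominators of $\hat{W}_5(1) - \hat{W}_5(f)$ --- each of which is strictly positive on the domain since $d < 1/6$ --- the sign reduces to that of $L(f) - R(f)$, where
\[
L(f) := d(2f - 4d - 1)\,f\,(f - 2d), \qquad R(f) := (1-2d)(1-4d)(f - 1 + d).
\]
So the task becomes showing $L(f) \geq R(f)$ on $[1-d, 1]$.

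The key step is a polynomial division. Since $L(1) = R(1) = d(1-2d)(1-4d)$, the linear factor $(f-1)$ divides the difference, and a direct division yields $L(f) - R(f) = (f - 1)\, Q(f)$ with
\[
Q(f) = 2d\,f^2 + d(1 - 8d)\,f + \bigl(8d^3 - 14d^2 + 7d - 1\bigr).
\]
The pivotal observation is that $Q(1) = 8d^3 - 22d^2 + 10d - 1 = p(d)$, the very polynomial from Proposition~\ref{prop:root}. Hence $Q(1) \leq 0$ precisely when $d \leq x^\ast$, with equality exactly at $d = x^\ast$. For the other endpoint, a short one-variable check gives $Q(1-d) = 18 d^3 - 27 d^2 + 10 d - 1$, whose derivative $54 d^2 - 54 d + 10$ has no zero in $[0, 1/6]$; hence $Q(1-d)$ is monotonically increasing on $[0, 1/6]$ and vanishes at $d = 1/6$, so $Q(1-d) \leq 0$ throughout $[0, 1/6]$.

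Finally, since $Q$ has non-negative leading coefficient $2d$, it is (weakly) convex in $f$ and thus attains its maximum on $[1-d, 1]$ at an endpoint; non-positivity at both endpoints then forces $Q \leq 0$ on the whole interval. Multiplying by $(f-1) \leq 0$ yields $L(f) - R(f) \geq 0$, whence $\hat{W}_5(f) \leq \hat{W}_5(1)$, and Lemma~\ref{lem:W5_iff} finishes the argument. I expect the main obstacle to be largely bookkeeping --- in particular, arranging the division so that the polynomial $p(d)$ of Proposition~\ref{prop:root} appears naturally as $Q(1)$, thereby justifying the specific constant $x^\ast$ as genuinely derived from the structure of the optimization rather than being invoked as an ad hoc numerical quantity.
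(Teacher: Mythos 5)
Your proof is correct, and it takes a genuinely different route from the paper's. The paper proves that $\hat{W}_5$ is monotonically increasing on $[1-d,1]$ by a calculus argument: it writes the ($f$-)derivative of a rescaled $\hat{W}_5$ as $-(\psi_1+\psi_2+\psi_3)$, shows each $\psi_i$ is monotonically increasing (via logarithmic derivatives and several auxiliary monotonicity estimates), concludes the derivative is decreasing in $f$, and finishes by checking that the derivative at $f=1$ equals $\frac{-8d^3+22d^2-10d+1}{(1-2d)^2(1-4d)^2}\geq 0$ via Proposition~\ref{prop:root}. You instead compare $\hat{W}_5(1)$ and $\hat{W}_5(f)$ directly and purely algebraically: after clearing the (verified positive) denominators, the difference factors as $(f-1)Q(f)$ with $Q(f)=2df^2+d(1-8d)f+(8d^3-14d^2+7d-1)$, and non-positivity of the convex quadratic $Q$ on $[1-d,1]$ reduces to the two endpoint evaluations $Q(1)=p(d)$ and $Q(1-d)=18d^3-27d^2+10d-1\leq 0$. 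I checked the factorization and both endpoint computations; they are correct, and the positivity of all cleared denominators holds on the stated domain. Your argument is shorter and avoids the derivative bookkeeping entirely, at the cost of proving only that $f=1$ is a global maximizer rather than full monotonicity of $\hat{W}_5$ (which is all the theorem needs); it also has the aesthetic advantage that $p(d)$ emerges as the endpoint value $Q(1)$ of the factored difference, explaining the origin of $x^\ast$ just as transparently as the paper's $\varphi'(1)$ computation. One cosmetic point: "the derivative $54d^2-54d+10$ has no zero in $[0,1/6]$" should be supplemented by noting it is positive at $d=0$ (its roots are $\approx 0.245$ and $\approx 0.755$), so that continuity gives positivity, hence monotonicity of $d\mapsto Q(1-d)$; this is immediate but worth stating.
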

\begin{proof}
	We first expand $\hat{W}_5$:
	\begin{align*}
		\hat{W}_5 &= \frac{\frac{1}{1-d} - 1}{1 - 2d} + \frac{1 - 2d}{f - 4d} \paren{\frac{\frac{1}{1-d} - \frac{1}{f}}{f - 2d} + \frac{\frac{1}{1-d} - 1}{1-2d}} \\
		&= \frac{d}{(1-d) (1-2d)} + \frac{1-2d}{1-d} \frac{1}{f-2d} \frac{1}{f- 4d} - (1-2d) \frac{1}{f} \frac{1}{f-2d} \frac{1}{f-4d} + \frac{d}{1-d} \frac{1}{f-4d}.
	\end{align*}
	
	To finish the proof, we only need to show that $\hat{W}_5$ is monotonically increasing in $f$ for fixed $d \in [0,x^\ast]$. The claim then follows from Lemma \ref{lem:W5_iff}. To do so, we will (up to a positive constant) calculate the derivative of $\hat{W}_5$ with respect to $f$ and show that it is non-negative. For that, we will show that actually $\hat{W}_5'$ is monotonically decreasing in $f$ for fixed $d \in [0,x^\ast]$. However, it will turn out that $\hat{W}_5'(1) \geq 0$, completing the proof:
	
	Multiplying $\hat{W}_5$ by $(1-d) / (1-2d) > 0$ and ignoring constant terms, consider 
	\begin{equation*}
		\varphi \colon [1-d, 1] \longrightarrow \R, f \longmapsto \frac{d}{1-2d} \frac{1}{f-4d} + \frac{1}{f-2d} \frac{1}{f-4d} - \frac{1-d}{f} \frac{1}{f-2d} \frac{1}{f-4d}.
	\end{equation*}
	
	The derivative of $\varphi$ is 
	\begin{align*}
		\varphi'(f) ={}& \frac{-d}{1-2d} \frac{1}{(f-4d)^2} - \frac{1}{(f-2d)^2} \frac{1}{f-4d} - \frac{1}{f-2d} \frac{1}{(f-4d)^2} \\
		& + \frac{1-d}{f^2} \frac{1}{f-2d} \frac{1}{f-4d} + \frac{1-d}{f} \frac{1}{(f-2d)^2} \frac{1}{f-4d} + \frac{1-d}{f} \frac{1}{f-2d}\frac{1}{(f-4d)^2}.
	\end{align*}
	
	We want to show that $\varphi'$ is monotonically decreasing in $f$ for fixed $d \in [0, x^\ast]$. Consider the functions
	\begin{align*}
		\psi_1\colon [1-d, 1] \longrightarrow \R_{\geq 0}, f &\longmapsto \paren{\frac{f- (1-d)}{f}} \frac{1}{f- 2d} \frac{1}{(f-4d)^2}, \\
		\psi_2\colon [1-d, 1] \longrightarrow \R_{\geq 0}, f &\longmapsto \paren{\frac{f - (1-d)}{f}} \frac{1}{(f- 2d)^2} \frac{1}{f-4d}, \\
		\psi_3\colon [1-d, 1] \longrightarrow \R_{\geq 0}, f &\longmapsto \frac{1}{f-4d} \paren{\frac{d}{1-2d} \frac{1}{f-4d} - \frac{1-d}{f^2} \frac{1}{f-2d}}.
	\end{align*}
	
	Note that $\varphi' =- (\psi_1 + \psi_2 + \psi_3)$. Hence, it suffices to show that each $\psi_i$ is monotonically increasing in $f$.
	
	For $\psi_1$, we will equivalently show that
	\begin{equation*}
		(\ln(\psi_1))(f) = \ln(f-(1-d)) - \ln(f) - \ln(f-2d) - 2\ln(f-4d)
	\end{equation*}
	is monotonically increasing. For that, we take the derivative:
	\begin{align*}
		(\ln(\psi_1))' (f) &= \frac{1}{f - (1-d)} - \frac{1}{f} - \frac{1}{f-2d} - \frac{2}{f-4d}.
	\end{align*}
	
	To show that the derivative is non-negative, it suffices to show
	\begin{alignat*}{3}
		& & \frac{1}{f - (1-d)} &\geq \frac{1}{f} + \frac{1}{f-2d} + \frac{2}{f-4d} \\ 
		\iff{}& & 1 &\geq \frac{f-(1-d)}{f} + \frac{f-(1-d)}{f-2d} + \frac{2(f- (1-d))}{f-4d} \\
		& &&= \frac{f - (1-d)}{f} + \frac{(f-2d) - (1-3d)}{f-2d} + 2 \cdot \frac{(f- 4d) - (1-5d)}{f-4d}.
	\end{alignat*}
	Clearly, the last expression is monotonically increasing in $f$, meaning that w.l.o.g. we may consider $f = 1$. This gives 
	\begin{alignat*}{4}
		& & 1 &\geq d + \frac{d}{1-2d} + \frac{2d}{1-4d} \\
		&\iff & 8d^2  -6 d +1 &\geq 8d^3 -14d^2 + 4d \\
		&\iff & 0 &\geq 8d^3 -22d^2 + 10d -1.
	\end{alignat*}
	As the last inequality holds by Proposition \ref{prop:root}, the derivative of $\ln(\psi_1)$ is indeed non-negative and we are done. 
	
	We proceed similarly for $\psi_2$: 
	\begin{align*}
		(\ln(\psi_2))(f) &= \ln(f-(1-d)) - \ln(f) - 2\ln(f-2d) - \ln(f-4d), \\
		(\ln(\psi_2))' (f) &= \frac{1}{f - (1-d)} - \frac{1}{f} - \frac{2}{f-2d} - \frac{1}{f-4d}.
	\end{align*}
	
	Comparing $(\ln(\psi_2))'$ with $(\ln(\psi_1))'$, we see that $(\ln(\psi_2))' \geq (\ln(\psi_1))'\geq 0$, giving us immediately that $\psi_2$ is monotonically increasing. 
	
	For $\psi_3$, we equivalently show that $$-\psi_3 (f) = \frac{1}{f-4d} \paren{\frac{1-d}{f^2} \frac{1}{f-2d} - \frac{d}{1-2d} \frac{1}{f-4d}}$$
	is monotonically decreasing. Actually, we will show that  
	\begin{equation*}
		\zeta(f) = \frac{1-d}{f^2} \frac{1}{f-2d} - \frac{d}{1-2d} \frac{1}{f-4d}
	\end{equation*}
	is monotonically decreasing in $f$ for fixed $d \in [0, x^\ast]$. Indeed, that would imply that 
	\begin{align*}
		\frac{1-d}{f^2} \frac{1}{f-2d} - \frac{d}{1-2d} \frac{1}{f-4d} &\geq \frac{1-d}{1-2d} - \frac{d}{1-2d} \frac{1}{1-4d} \\
		&= \frac{1}{1-2d} \paren{1-d - \frac{d}{1-4d}} \\ 
		&> \frac{1}{1-2d} \paren{1 - d - 3d} \tag{$1- 4d > 1/3$} \\
		&> 0,
	\end{align*}
	meaning that $\zeta$ is also positive. 
	As $f \longmapsto 1 / (f-4d)$ is also a positive, monotonically decreasing function, it would follow that $-\psi_3$ is monotonically decreasing. 
	
	Hence, consider $1 - d \leq f < g \leq 1$. We need to show that 
	\begin{alignat*}{3}
		& & \frac{1-d}{f^2 (f-2d)} - \frac{d}{1-2d} \frac{1}{f-4d} &\geq \frac{1-d}{g^2 (g-2d)} - \frac{d}{1-2d} \frac{1}{g -4d} \\
		&\iff & (1-d) \paren{\frac{1}{f^2 (f-2d)} - \frac{1}{g^2 (g-2d)}} &\geq \frac{d}{1-2d} \paren{\frac{1}{f-4d} - \frac{1}{g-4d}}.
	\end{alignat*}
	
	Note that 
	\begin{alignat*}{2}
		\frac{1}{f^2 (f-2d)} - \frac{1}{g^2 (g-2d)} &= \frac{g^2 (g-2d) - f^2 (f-2d)}{f^2 (f-2d) g^2 (g-2d)} 
		&&= \frac{(g-f) \paren{f^2 + fg +g^2 - 2d (g+f)}}{f^2(f-2d) g^2 (g-2d)}, \\
		\frac{1}{f-4d} - \frac{1}{g-4d} &= \frac{(g-4d) - (f-4d)}{(f-4d) (g-4d)} 
		&&= \frac{g-f}{(f-4d) (g-4d)}.
	\end{alignat*}
	
	Thus, it suffices to show that 
	\begin{alignat*}{3}
		& & (1-d) \frac{(g-f) \paren{f^2 + fg +g^2 - 2d (g+f)}}{f^2(f-2d) g^2 (g-2d)} &\geq \frac{d}{1-2d} \frac{g-f}{(f-4d) (g-4d)} \\
		&\iff & \frac{f^2 + fg +g^2 - 2d (g+f)}{f^2 g^2} &\geq \frac{d}{(1-d)(1-2d)} \frac{f-2d}{f-4d} \frac{g-2d}{g-4d}.
	\end{alignat*}
	
	We will lower bound and upper bound the left hand side and right hand side respectively:
	\begin{align*}
		\frac{d}{(1-d)(1-2d)} \frac{f-2d}{f-4d} \frac{g-2d}{g-4d} &\leq \frac{d}{(1-d) (1-2d)} \frac{(1-3d)^2}{(1-5d)^2} \\
		&= \frac{d}{(1-d) (1-2d)} \paren{1 + \frac{2d}{1-5d}}^2, \\
		\frac{f^2 + fg +g^2 - 2d (g+f)}{f^2 g^2} &\geq \frac{f^2 + fg +g^2 - 2d (g+f)}{f^2} \\
		&= 1 + \frac{g}{f} + \paren{\frac{g}{f}}^2 - 2d \paren{\frac{g}{f^2} + \frac{1}{f}} \\
		&\geq 3 - 2 d \paren{\frac{1}{(1-d)^2} + \frac{1}{1-d}},
	\end{align*}
	where we used $f^2 + fg +g^2 - 2d (g+f) > 0$ for the second inequality. 
	
	Note that the lower bound is monotonically decreasing in $d$ and the upper bound is monotonically increasing in $d$. Hence, 
	\begin{align*}
		& \frac{f^2 + fg +g^2 - 2d (g+f)}{f^2 g^2} - \frac{d}{(1-d)(1-2d)} \frac{f-2d}{f-4d} \frac{g-2d}{g-4d} \\
		\geq{}& 3 - 2 d \paren{\frac{1}{(1-d)^2} + \frac{1}{1-d}} - \frac{d}{(1-d) (1-2d)} \paren{1 + \frac{2d}{1-5d}}^2 \\
		\geq{}& 3 - \frac{8}{25} \paren{\paren{\frac{25}{21}}^2 + \frac{25}{21}} - \frac{4}{21} \frac{25}{17} \paren{1 + \frac{8}{5}}^2 \tag{$d \leq x^\ast < 4/25$} \\
		\geq{}& \frac{2039}{7497} \\
		>{}& 0.
	\end{align*}
	
	Thus, $\zeta$ is monotonically decreasing, meaning that $\psi_3$ is monotonically increasing.
	
	Altogether, we therefore have that $\varphi'$, which is up to a positive constant $\hat{W}_5'$, is indeed monotonically decreasing in $f$ for fixed $d \in [0, x^\ast]$. It remains to show that $\varphi'(1) \geq 0$:
	\begin{align*}
		- \psi_1 (1) &= -  \frac{d}{1-2d} \frac{1}{(1-4d)^2}, \\
		- \psi_2 (1) &= - \frac{d}{(1-2d)^2} \frac{1}{1-4d}, \\ 
		- \psi_3 (1) &= \frac{1}{1-4d} \paren{\frac{1-d}{1-2d} - \frac{d}{1-2d} \frac{1}{1-4d}},\\
		\varphi'(1) &= \frac{1}{(1-2d)(1-4d)} \paren{(1-d) - d \paren{\frac{1}{1-2d} + \frac{2}{1-4d} }} \\
		&= \frac{1}{(1-2d)^2(1-4d)^2} \paren{(1-d) (1-2d) (1-4d)- d \paren{(1-4d) + 2 (1-2d)}} \\
		&= \frac{-8d^3 +22d^2 -10d+1}{(1-2d)^2 (1-4d)^2} \\
		&\geq 0. \tag{Prop. \ref{prop:root}}
	\end{align*}
	This completes the proof. 
\end{proof}

\section{Concluding remarks}
\label{sec:conclusions}

	In this paper, we improved the upper bound on the fractional threshold given by Lee in Theorem \ref{thm:hyunwoo_frac} and thus also improved the previous upper bound on the minimum codegree threshold $\theta_{\STS}$ for Steiner triple systems. Furthermore, we also showed that, without any modifications, this is the best achievable result using Delcourt and Postle's approach in \cite{delcourt_postle_fractional}. As such, it seems unlikely that it can be used to (asymptotically) resolve Conjecture \ref{conj:hyunwoo1}. Indeed, in \cite[Rem. 3.11]{ravry_zhao}, it is shown that the minimum codegree required for every vertex to be in some $\smash{K_5^{(3)}}$ is at least $3n/4 + \Theta(1)$. Furthermore, with the standard methods used in this paper, for every pair of positive codegree to be contained in a clique of order five required the essential minimum codegree to be greater than $5 n / 6$. This is in contrast to the setting in \cite{delcourt_postle_fractional}, where a minimum degree greater than $3n/4$ suffices for every pair to be in a $K_5$. In particular, our application of Delcourt and Postle's method unavoidably achieved a weaker bound in comparison to their original result in \cite{delcourt_postle_fractional}. Surprisingly, it was the opposite for Lee's application of Dross's approach: Whereas Dross derived an upper bound of $9n/10$, Theorem \ref{thm:hyunwoo_frac} yields an upper bound of roughly $0.88n$. 

	One aspect unique to our setting is that we not only have control on the minimum essential codegree of $H$, but also the minimum degree in $\partial H$. This was utilized by neither Lee nor by the method presented here. Hence, it seems likely that any improvement on Theorem \ref{thm:main_fractional} will have to make use of this extra constraint. 
\subsection{The conjectured value of $\theta_{\STS}^\ast$}

Naturally, the most immediate direction for further research is showing that $\theta_{\STS}^\ast \leq 3/4$. Together with Theorem \ref{thm:hyunwoo_reduct}, this would resolve the asymptotic version of Conjecture \ref{conj:hyunwoo1}. However, we conjecture that the value of $\theta_{\STS}^\ast$ is actually strictly smaller than $\theta_{\STS}$. Indeed, during his research, Lee originally thought that $\theta_{\STS}$ equals $2/3$ and now believes that:

\begin{conjecture}[Lee 2023, {\cite{lee_personal}}]
	$\theta_{\STS}^\ast = 2/3$.
	\label{conj:frac_STS}
\end{conjecture}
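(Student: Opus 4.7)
The plan is to deduce the theorem from Lemma \ref{lem:W5_iff} by establishing that $\hat{W}_5$ is monotonically non-decreasing in $f$ on $[1-d, 1]$ for every fixed $d \in [0, x^\ast]$. Given this, $\hat{W}_5(f) \leq \hat{W}_5(1)$, and the latter is bounded by $1$ (with equality iff $d = x^\ast$) exactly by Lemma \ref{lem:W5_iff}.

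To carry out the monotonicity argument, I would first expand $\hat{W}_5(f)$ as a sum of rational functions whose denominators are products of $f$, $f-2d$, and $f-4d$. On the domain $d \in [0, 1/6)$ and $f \in [1-d, 1]$, each such factor is strictly positive (bounded below by $1/3$), so the signs of individual summands are easy to control. Attempting to show $\hat{W}_5'(f) \geq 0$ directly looks unwieldy because the derivative mixes positive and negative rational summands. My plan is therefore a two-step argument: (i) show that $-\hat{W}_5'$ can, up to a positive scaling, be decomposed as a sum $\psi_1 + \psi_2 + \psi_3$ of non-negative functions on $[1-d, 1]$, each monotonically increasing in $f$; (ii) conclude that $\hat{W}_5'$ is monotonically non-increasing, so that $\hat{W}_5'(f) \geq \hat{W}_5'(1)$, and it then suffices to verify non-negativity at $f = 1$.

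For step (i), I would choose the $\psi_i$ so that each factors cleanly, e.g.\ isolating a factor $\bigl(f-(1-d)\bigr)/f$ (which vanishes at the left endpoint) multiplied by a product of reciprocals $1/(f-2d)$, $1/(f-4d)$. Monotonicity of such products is most easily shown via logarithmic derivatives, which reduces the question to a rational inequality in $f$ and $d$. In each case, that inequality should itself be monotone in $f$, so it suffices to verify it at $f = 1$; this substitution, after clearing denominators, should reduce precisely to $p(d) \leq 0$ with $p(x) = 8x^3 - 22x^2 + 10x - 1$, which is exactly the content of Proposition \ref{prop:root} for $d \in [0, x^\ast]$. For step (ii), evaluating $\hat{W}_5'(1)$ explicitly and putting it over the common denominator $(1-2d)^2(1-4d)^2$ should produce $-p(d)$ in the numerator, again non-negative for $d \leq x^\ast$ by Proposition \ref{prop:root}.

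The main obstacle I anticipate is finding the right decomposition in step (i): naive groupings of the terms in $\hat{W}_5'$ yield auxiliary functions that are themselves not obviously monotone, and an \emph{ad hoc} log-derivative computation may well produce a new inequality no easier than the one we started with. The guiding heuristic is that every piece of $\hat{W}_5$ that obstructs $f$-monotonicity must eventually be tamed by the cubic $p(x)$, so aligning the decomposition with the factorization structure of $p$ — in particular ensuring each $\psi_i$ is engineered so that its log-derivative inequality, after reduction to $f = 1$, is equivalent to $p(d) \leq 0$ — should be the correct design principle.
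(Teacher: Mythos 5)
The statement you were asked about is Conjecture \ref{conj:frac_STS}, which asserts $\theta_{\STS}^\ast = 2/3$. This is an \emph{open conjecture} (attributed to Lee), not a theorem, and the paper does not prove it. What the paper does establish is the lower bound $\theta_{\STS}^\ast \geq \theta_{\STS}^f(0) \geq 2/3$, via the explicit tripartite construction of Proposition \ref{prop:cons2}: one takes an equitable partition $V_1 \dCup V_2 \dCup V_3$, deletes all transversal triples, and shows by a counting (space-barrier) argument that any fractional Steiner triple system would force $\sum_e \varphi(e)$ to be simultaneously equal to $\tfrac{1}{3}\binom{n}{2}$ and strictly less than $\tfrac{1}{3}\binom{n}{2}$. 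The matching upper bound $\theta_{\STS}^\ast \leq 2/3$ is not known; the best upper bound available, and the main result of the paper, is $\theta_{\STS}^\ast \leq 1 - x^\ast < 0.8579$.

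Your proposal does not address this statement at all. What you sketch --- decomposing $-\hat{W}_5'$ as $\psi_1+\psi_2+\psi_3$, proving monotonicity of $\hat{W}_5$ in $f$ via logarithmic derivatives, reducing everything to the sign of $p(d) = 8d^3-22d^2+10d-1$, and invoking Lemma \ref{lem:W5_iff} at $f=1$ --- is precisely the paper's proof of the final optimization theorem in Section \ref{sec:optimization}, namely that $\OPT(\text{P5}) \leq 1$ for $d \leq x^\ast$. That argument is correct and matches the paper's, but it only yields the upper bound $\theta_{\STS}^f(\varepsilon) \leq 1 - x^\ast \approx 0.8579$; it cannot be pushed to $2/3$, since (as the paper notes in Section \ref{sec:conclusions}) the edge-gadget method already requires essential codegree above $5n/6$ just to guarantee that every pair lies in a $K_5^{(3)}$. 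If you want to argue for the conjecture itself, you would need, at minimum, the lower-bound construction of Proposition \ref{prop:cons2}, and you would still be left with an open upper-bound problem that no argument in this paper resolves.
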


This is motivated by the following natural construction.

\begin{proposition}[Lee 2023, {\cite{lee_personal}}]
	Consider the $3$-uniform hypergraph $H = (V, E)$ on $n \geq 5$ vertices which is constructed as follows: Take an equitable\footnote{Meaning the size of any two partition classes differ by at most by one.} partition $V_1 \dCup V_2 \dCup V_3$ of $V$ and set
	\begin{equation*}
		\smash{E = V^{(3)} \setminus \set{e \in V^{(3)} \colon \abs{e \cap V_1} = \abs{e \cap V_2} = \abs{e \cap V_3} = 1}}.
	\end{equation*}
	Then $\delta_2(H)\geq 2n /3 - 8/3$ and $H$ doesn't contain a fractional Steiner triple system.
	\label{prop:cons2}
\end{proposition}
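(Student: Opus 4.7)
For the codegree bound, my plan is a one-line case analysis: for a pair $p = \{u, v\}$, split on whether $u, v$ lie in the same class. In the same-class case, any third vertex yields an edge (since the forbidden triples require exactly one vertex in each class), so $\deg(p) = n - 2$. In the cross-class case $u \in V_i, v \in V_j$ with $i \neq j$, the forbidden third vertices form the remaining class $V_k$, giving $\deg(p) = n - 2 - |V_k|$. For an equitable partition, $|V_k| \leq \lceil n/3 \rceil \leq (n+2)/3$, so $\delta_2(H) \geq 2n/3 - 8/3$.

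For the non-existence of a fractional Steiner triple system in $H$, the plan is a short double-counting argument that isolates the total $\psi$-mass on edges internal to a single class. First, note that $\delta_2(H) > 0$ for $n \geq 5$, so $\partial H$ is complete and $\deg^\psi(p) = 1$ is required for every pair $p$. Suppose for contradiction that $\psi$ is such a fractional STS, and introduce the non-negative sums
\begin{align*}
	\bar{a}_i &\coloneqq \sum_{e \subseteq V_i} \psi(e), &
	\bar{b}_{(i,j)} &\coloneqq \sum_{e \,:\, |e \cap V_i| = 2,\, |e \cap V_j| = 1} \psi(e).
\end{align*}
Summing the constraint $\deg^\psi(p) = 1$ first over all $p \in V_i^{(2)}$ and then over all $p \in V_i \times V_j$ will yield, for all indices with $\{i, j, k\} = \{1, 2, 3\}$,
\begin{align*}
	\binom{|V_i|}{2} &= 3 \bar{a}_i + \bar{b}_{(i, j)} + \bar{b}_{(i, k)}, &
	|V_i| \cdot |V_j| &= 2 \bigl(\bar{b}_{(i,j)} + \bar{b}_{(j, i)}\bigr),
\end{align*}
since an edge $e$ of $H$ contains $\binom{|e \cap V_i|}{2}$ pairs inside $V_i$ and $|e \cap V_i| \cdot |e \cap V_j|$ pairs in $V_i \times V_j$.

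Summing the first identity over $i \in \{1, 2, 3\}$ and substituting the second gives
\begin{equation*}
	3 \sum_{i=1}^{3} \bar{a}_i = \sum_{i=1}^{3} \binom{|V_i|}{2} - \frac{1}{2} \sum_{i<j} |V_i| \cdot |V_j| = \frac{3 S_2 - n^2 - 2n}{4},
\end{equation*}
where $S_2 = \sum_i |V_i|^2$. Checking the three residues of $n$ modulo $3$ shows $3 S_2 \leq n^2 + 2$ for equitable partitions, so the right-hand side is at most $(1-n)/2 < 0$ for $n \geq 2$, contradicting $\bar{a}_i \geq 0$.

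The main obstacle is identifying the right linear combination of pair constraints: one has to aggregate them in two different ways so that the cross-class weights $\bar{b}_{(i,j)}$ completely cancel, leaving $\sum_i \bar{a}_i$ as the sole free quantity. Once the aggregation is in place, the bound $3 S_2 \leq n^2 + 2$ is essentially Cauchy--Schwarz with a small integrality correction, and the sign of the right-hand side becomes transparent.
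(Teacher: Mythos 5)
Your proposal is correct, and both halves match the paper's argument in substance: the codegree bound is the same case analysis, and the non-existence proof is the same space-barrier double count over intra-class versus cross-class pairs, hinging on the arithmetic fact $3\sum_i |V_i|^2 \le n^2+2 < n^2+2n$, i.e. $\sum_i \binom{|V_i|}{2} < \frac{1}{3}\binom{n}{2}$. The only cosmetic difference is bookkeeping — the paper combines the global identity $\sum_e \varphi(e) = \frac{1}{3}\binom{n}{2}$ with the inequality $\sum_e\varphi(e) \le \sum_i\binom{|V_i|}{2}$ (every edge covers an intra-class pair), whereas you eliminate the cross-class mass exactly and force $\sum_i \bar{a}_i < 0$ — which yields the same infeasibility certificate.
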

\begin{proof}
	Since the partition is equitable, we have for all $i \in [3]$
	\begin{equation*}
		\frac{n}{3} - \frac{2}{3} \leq \abs{V_i} \leq \frac{n}{3} + \frac{2}{3}.
	\end{equation*} 
	
	Furthermore, by construction, it follows that the codegree is smallest if the vertices are taken from two different parts. Hence, we get 
	\begin{equation*}
		\delta_2 (H) = (\abs{V} - 2) - \max_{i \in [3]} \abs{V_i} \geq \frac{2n}{3} - \frac{8}{3}. 
	\end{equation*}
	
	It remains to show that $H$ doesn't contain a fractional Steiner triple system: Assume that $\varphi$ is a fractional Steiner triple system in $H$. By definition, we have 
	\begin{align*}
		3 \sum_{e \in E(H)} \varphi(e) = \sum_{e \in E(H)} \sum_{p \in E(\partial H) \colon p \subseteq e} \varphi(e) 
		= \sum_{p \in E(\partial H)} \deg^\varphi(p)
		= \sum_{p \in E(\partial H)} 1 
		= \binom{n}{2},
	\end{align*}
	where we have used that every edge covers exactly $3$ pairs and that $\partial H$ is complete. 
	
	For concreteness, let $n_i = \abs{V_i}$ for $i \in [3]$. W.l.o.g. we have $n_1 \leq n_2 \leq n_3$. Since $V_1 \dCup V_2 \dCup V_3$ is an equitable partition, it follows that
	\begin{equation*}
		(n_1, n_2, n_3) = \begin{cases}
			(k-1, k, k), & n = 3k-1 \\
			(k,k,k), & n = 3k \\
			(k,k,k+1), & n = 3k+1,
		\end{cases}
	\end{equation*}
	where $k \in \N$ is the unique integer with $\abs{n - 3k}\leq 1$. 
	
	Now, observe that any edge in $H$ covers at least one pair in $\smash{V_1^{(2)} \cup V_2^{(2)} \cup V_3^{(2)}}$ since triples intersecting all the partition classes are not in $E(H)$. In particular, every edge's weight contributes to the (weighted) codegree of at least one such pair, hence
	\begin{alignat*}{3}
		\sum_{e \in E(H)} \varphi(e) &\leq \sum_{p \in V_1^{(2)} \cup V_2^{(2)} \cup V_3^{(2)}} \deg^\varphi(p) &&= \sumOver{i}{1}{3} \binom{n_i}{2} \\
		&= \begin{cases}
			\begin{rcases}
				\frac{(k-1) (k-2)}{2} + 2 \cdot \frac{k (k-1)}{2} , & n = 3k-1 \\[5pt]
				3 \cdot \frac{k(k-1)}{2} , & n = 3k \\[5pt]
				2 \cdot \frac{k(k-1)}{2} + \frac{(k+1)k}{2} , & n = 3k+1
			\end{rcases}
		\end{cases}
		&&= \begin{cases}
			\begin{rcases}
				\frac{(3k-2)(3k-3)}{6}, & n = 3k-1 \\[5pt]
				\frac{3k (3k-3)}{6}, & n = 3k \\[5pt]
				\frac{3k (3k-1)}{6}, & n = 3k+1
			\end{rcases}
		\end{cases} \\
		&< \begin{cases}
			\begin{rcases}
				\frac{(3k-1)(3k-2)}{6}, & n = 3k-1 \\[5pt]
				\frac{3k(3k-1)}{6}, & n = 3k \\[5pt]
				\frac{(3k+1) 3k}{6}, & n = 3k+1
			\end{rcases}
		\end{cases}
		&&= \frac{1}{3} \binom{n}{2}. \text{ \lightning}
	\end{alignat*}
	Therefore, $H$ does not contain a fractional Steiner triple system.
\end{proof}
\begin{corollary}
	$\theta_{\STS}^\ast \geq \theta_{\STS}^f(0) \geq 2/3$.
\end{corollary}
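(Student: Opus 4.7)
The plan is to chain the two inequalities separately. The first, $\theta_{\STS}^\ast \geq \theta_{\STS}^f(0)$, is essentially free: it was already noted in the remark following Definition \ref{def:frac_STS_thresh} that $\theta_{\STS}^f$ is monotonically decreasing in $\varepsilon$, and $\theta_{\STS}^\ast$ is defined as the limit $\lim_{\varepsilon \downTo 0} \theta_{\STS}^f(\varepsilon)$, so taking the limit from above yields $\theta_{\STS}^\ast \geq \theta_{\STS}^f(0)$.

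For the second inequality $\theta_{\STS}^f(0) \geq 2/3$, the strategy is to use the construction from Proposition \ref{prop:cons2} as a family of counterexamples. Fix any $\delta < 2/3$; I would like to show that $\delta$ does not witness the infimum defining $\theta_{\STS}^f(0)$. To this end, let $H$ be the $3$-uniform hypergraph from Proposition \ref{prop:cons2} on $n$ vertices for $n$ chosen large enough. Observe that $\partial H$ is complete by construction, since any pair $p \subseteq V$ is contained in at least one edge (indeed, excluding only transversal triples leaves many completions of $p$ into an edge, as long as $n \geq 5$). Hence $\delta(\partial H) = n-1 \geq (1-0)(n-1)$, so the $\varepsilon = 0$ hypothesis is satisfied trivially. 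Moreover, $\delta_2^{\ess}(H) = \delta_2(H) \geq 2n/3 - 8/3$, and for $n$ sufficiently large we have
\begin{equation*}
\delta_2^{\ess}(H) \geq \frac{2n}{3} - \frac{8}{3} \geq \delta(n-2),
\end{equation*}
since $\delta < 2/3$ means the ratio $\delta_2^{\ess}(H)/(n-2) \to 2/3 > \delta$.

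Thus this $H$ satisfies all the hypotheses in the definition of $\theta_{\STS}^f(0)$ with parameter $\delta$, yet by Proposition \ref{prop:cons2} it contains no perfect fractional Steiner triple system. Since this holds for arbitrarily large $n$, no such $\delta < 2/3$ can appear in the infimum, forcing $\theta_{\STS}^f(0) \geq 2/3$. Combining the two inequalities completes the proof. There is no real obstacle here; the only minor point to be careful about is verifying that $\partial H$ is complete so that the $\varepsilon = 0$ condition is met, and that the asymptotic gap between $2n/3 - 8/3$ and $\delta(n-2)$ for $\delta < 2/3$ is handled correctly for large $n$.
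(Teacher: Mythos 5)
Your proposal is correct and follows exactly the route the paper intends: the first inequality is the monotonicity observation already recorded in the remark after Definition \ref{def:frac_STS_thresh}, and the second follows by feeding the construction of Proposition \ref{prop:cons2} (whose shadow is complete and whose essential codegree is $2n/3 - O(1)$) into the definition of $\theta_{\STS}^f(0)$ to rule out every $\delta < 2/3$. The two points you flag as needing care — completeness of $\partial H$ and the asymptotic comparison of $2n/3 - 8/3$ with $\delta(n-2)$ — are indeed the only details, and you handle both correctly.
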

Note that the argument provided here for why $H$ doesn't contain a Steiner triple system relies on a \emph{space barrier}: In some sense, there is not enough room for the edges to accommodate the demand of each pair. This is in contrast to Lee's construction in \cite[Prop.$\ $1.7]{hyunwoo} where it is a \emph{parity argument}. Furthermore, it is important to emphasize that the construction in Proposition \ref{prop:cons2} works for \emph{any} $n \geq 5$. Meanwhile, to construct the hypergraph in \cite[Prop.$\ $1.7]{hyunwoo}, one fundamentally requires $n$ being odd. 

In fact, one can verify by hand that the construction given in \cite[Prop.$\ $1.7]{hyunwoo} contains fractional Steiner triple systems, see Appendix \cite[App. A]{mythesis}, while the construction given in Proposition~\ref{prop:cons2} provably doesn't, which adds to the plausibility of Conjecture \ref{conj:frac_STS}. To solidify this further, it would be productive to find more (presumably) extremal examples for either problem. We note that, if Conjecture \ref{conj:frac_STS} is true, it would be in stark contrast to the \hyperref[conj:nash-williams]{Nash-Williams conjecture} where it is known that the minimum degree threshold for a fractional $K_3$-decomposition is at least $3n/4 + o(n)$, see the concluding remarks in \cite{yuster_fractional_decompositions}. 

\subsection{Higher uniformities}

Another problem would be to consider higher uniformities. Given the previously studied settings, it may be the most natural to consider the minimum codegree threshold for (spanning) Steiner systems with parameters $(r-1, r,n)$\footnote{Meaning that we have $n$ vertices, and every $(r-1)$-set covered by precisely one hyperedge.} in $r$-uniform hypergraphs where $n$ is sufficiently large and satisfies the usual divisibility conditions
\begin{equation*}
	\fa 0\leq i \leq r-1 \colon \left. \binom{r-i}{(r-1)-i} \middle\vert \binom{n-i}{(r-1)-i}\right. .
\end{equation*}
However, with the existence conjecture, whether those necessary divisbility conditions of the parameters is essentially sufficient for the existence of a corresponding design, solved barely a decade ago by \cite{keevash_designs, glock_kühn_lo_osthus}, estimating thresholds in the way we did for Steiner triple systems seems out of reach. 

Still, we wish to provide at least a lower bound which generalizes the construction of Proposition \ref{prop:cons2}, though this time we employ a parity argument. 

\begin{proposition}
	Let $n \geq r \geq 2$. Consider the $r$-uniform hypergraph $H$ with vertex set $V = V_1 \dCup \dots \dCup V_r$, where $\abs{V} = n$, all the parts pairwise differ at most by two in their size and $\abs{V_1}, \dots, \abs{V_{r-1}}$ are odd. Moreover, let the edge set of $H$ be $\smash{E = V^{(r)}} \setminus \set{e \in V^{(r)}\colon \abs{e \cap V_1} = \dots = \abs{e \cap V_r} = 1}$. Then $\delta_{r-1}(H) \geq (r-1)n / r - \mathcal{O}(1)$ and $H$ contains no $(r-1, r, n)$-Steiner system. 
	\label{prop:odd_cons}
\end{proposition}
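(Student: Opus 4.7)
\medskip

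\noindent\textbf{Proof proposal.} The plan is to establish the codegree bound by a direct count and to prove nonexistence via a double counting / parity argument generalizing the one implicit in Proposition~\ref{prop:cons2}, with the oddness hypothesis providing the parity obstruction.

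For the codegree bound, I would fix any $(r-1)$-set $p \subseteq V$ and observe that $p \cup \{v\}$ fails to be an edge of $H$ precisely when $|p \cap V_i| + |\{v\} \cap V_i| = 1$ for every $i \in [r]$. This can only happen if $p$ has exactly one vertex in each of $r-1$ of the parts and is disjoint from one part $V_j$; in that case the forbidden $v$'s are exactly the elements of $V_j$. Hence $\deg(p) \geq n - (r-1) - \max_j |V_j|$, and since parts differ pairwise by at most $2$, we have $\max_j |V_j| \leq n/r + 2$, giving $\delta_{r-1}(H) \geq (r-1)n/r - \mathcal{O}(1)$ as claimed.

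For the nonexistence of an $(r-1,r,n)$-Steiner system $S \subseteq E(H)$, I would double count the family $\mathcal{T}$ of ``partial transversals''
\[
\mathcal{T} = \set{p \in V^{(r-1)} \colon |p \cap V_i| = 1 \text{ for all } i \in [r-1]},
\]
whose size is $\prod_{i=1}^{r-1} |V_i|$, an odd integer by hypothesis. Since $S$ is a Steiner system, each $p \in \mathcal{T}$ is covered by a unique edge $e_p \in S$. The added vertex $e_p \setminus p$ cannot lie in $V_r$, as that would make $e_p$ a full transversal, which is excluded from $E(H)$. Hence $e_p \setminus p \in V_j$ for some $j \in [r-1]$, so $e_p$ has exactly $2$ vertices in $V_j$, exactly one in each $V_k$ for $k \in [r-1] \setminus \set{j}$, and none in $V_r$. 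Conversely, every such edge $e \in S$ is $e_p$ for exactly $2$ choices of $p \in \mathcal{T}$, corresponding to which of its two $V_j$-vertices gets removed.

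Therefore the double count yields
\[
\prodOver{i}{1}{r-1} |V_i| = 2 \cdot |\set{e \in S \colon e\text{ has the type above}}|,
\]
forcing the left hand side to be even and contradicting the oddness of each $|V_i|$ for $i \in [r-1]$. The main obstacle to watch for is ensuring that such an equitable partition with $|V_1|, \dots, |V_{r-1}|$ odd actually exists for the given $n$ and $r$; this is a routine case analysis on $n \bmod r$ and the parity of $\lfloor n/r \rfloor$, and only affects the $\mathcal{O}(1)$ additive loss in the codegree bound, not the parity argument itself.
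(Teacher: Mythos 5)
Your proposal is correct and follows essentially the same route as the paper: the paper's quantity $M=\sum_{(v_1,\dots,v_{r-1})\in V_1\times\cdots\times V_{r-1}}\deg_S(v_1,\dots,v_{r-1})=\prod_{i=1}^{r-1}\abs{V_i}$ is exactly your count of partial transversals, and the paper likewise shows each edge of $S$ contributes to precisely two such tuples, yielding the same parity contradiction. Your treatment of the codegree bound is just a more explicit version of what the paper dispatches in one line.
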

\begin{proof}
	The first claim $\delta_{r-1}(H) \geq (r-1)n / r - \mO(1)$ follows directly from the fact that all $V_i$'s have roughly equal size. Hence, let us focus on the latter claim: Assume that $S\subseteq H$ is an $(r-1,r, n)$-Steiner system. Consider the sum 
	\begin{equation*}
		M = \sum_{(v_1, \dots, v_{r-1}) \in V_1 \times \dots \times V_{r-1} }\deg_S(v_1, \dots, v_{r-1}) = \abs{V_1} \abs{V_2}  \cdots \abs{V_{r-1}}.
	\end{equation*}
	On the one hand, $M$ must be odd since $V_1, \dots, V_r$ are odd. On the other hand, as we remove the \enquote{partite edges} from $H$, every edge $e \in E(S)$ covering some $(v_1, \dots, v_{r-1}) \in V_1 \times \dots \times V_{r-1}$ must by pigeonhole principle satisfy $\abs{e \cap V_i} = 2$ for a unique $i \in [r-1]$. Let $v_i' \in (e \cap V_i) \setminus \set{v_i}$. It follows that $e$ must contribute to $M$ in precisely two tuples, namely $(v_1, \dots, v_i, \dots ,v_{r-1}),(v_1, \dots, v_i', \dots, v_{r-1})$. Since this is the case for any edge $e$ that contributes to $M$, we must have that $M$ is even, a contradiction. 
\end{proof}

It would also be interesting to study the fractional setting of this problem, i.e. where we are interested in \emph{fractional $(r-1, r, n)$-Steiner systems} $\varphi \colon E(H) \longrightarrow \R_{\geq 0}$ in an $r$-uniform hypergraph $H$ where $\smash{\deg_H^{\varphi}(p)} = 1$ for every $p \in \smash{ V(H)^{(r-1)}}$. Note that the argument for Proposition \ref{prop:cons2} can be used to show that $K_{\floor{n/2} - 1, \ceil{n/2} +1}$ does not contain a fractional perfect matching. Based on this, we conjecture more generally the following:
\begin{conjecture}
	The minimum codegree threshold for a fractional $(r-1, r, n)$-Steiner system is $(r-1) n / r + \Theta(1)$. 
\end{conjecture}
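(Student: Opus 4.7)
The conjecture has two matching parts: a construction showing that codegree $(r-1)n/r - O(1)$ does not force a fractional $(r-1,r,n)$-Steiner system, and a proof that codegree $(r-1)n/r + O(1)$ does. I would address these separately.

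For the lower bound, the natural starting point is the direct generalization of Proposition~\ref{prop:cons2}: fix an equitable partition $V = V_1 \dCup \cdots \dCup V_r$ and set
\[
E = V^{(r)} \setminus \{e \in V^{(r)} : |e \cap V_i| = 1 \text{ for all } i \in [r]\}.
\]
A short pigeonhole argument gives $\delta_{r-1}(H) \geq (r-1)n/r - O(1)$ and ensures $\partial H$ is complete. For non-existence, let $\mS$ denote the family of $(r-1)$-sets containing at least two vertices in some $V_i$. Any fractional system $\varphi$ satisfies $\sum_e \varphi(e) = \binom{n}{r-1}/r$ and, since every edge contains some $S \in \mS$, also $\sum_e \varphi(e)\, c_e = |\mS|$ with $c_e = |\{S \in \mS : S \subset e\}|$. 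For $r = 3$ the bound $c_e \geq 1$ together with $|\mS| < \binom{n}{2}/3$ yields Proposition~\ref{prop:cons2}. For $r \geq 4$, however, edges of pattern $(2,1,\dots,1,0)$ give $c_e = r-2$, and a direct calculation shows this is no longer tight enough. I would refine the double-count by exploiting that edges with $c_e = r-2$ form a structurally small sub-family and cannot simultaneously account for all of $\sum_e \varphi(e)$, or alternatively modify the construction --- e.g.\ by using an asymmetric partition or additionally deleting a sparse structured family of edges --- so as to push $|\mS|$ below $c_{\min} \binom{n}{r-1}/r$.

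For the upper bound, I would attempt to adapt the Delcourt--Postle-style weighting of Sections~\ref{sec:gadgets_weighting}--\ref{sec:optimization} to higher uniformities. One would introduce $r$-uniform gadgets attached to ordered cliques in $\mO\mK_s(H)$ for an appropriately chosen $s > r$, prove an analog of Proposition~\ref{prop:gadget_delta} showing they act as delta functions on the $(r-1)$-shadow, cascade the unit demand at each $(r-1)$-set through ordered cliques of sizes $r, r+1, \dots, s$, and reduce non-negativity of the resulting weighting $w_H$ to a finite-dimensional optimization problem akin to the one solved here.

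The upper bound is almost certainly where the real difficulty lies, and this framework in fact appears structurally unable to reach the conjectured threshold. For the gadgets to be defined on every $(r-1)$-set of $\partial H$ one needs the iterative extension $K_r^{(r)} \subset K_{r+1}^{(r)} \subset \cdots \subset K_s^{(r)}$, which by inclusion-exclusion demands essential codegree exceeding $1 - 1/\binom{s-1}{r-1}$; an easy check shows this strictly exceeds $(r-1)/r$ for every $s \geq r+2$ and every $r \geq 2$, with the gap widening in $r$. This mirrors (and amplifies for larger $r$) the gap already present for $r = 3$ between the $5/6$ threshold required to set up the weighting and the conjectured $2/3$ answer. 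Closing this gap will likely require genuinely different techniques --- presumably iterative absorption combined with the hypergraph nibble, or an adaptation of the machinery behind the resolution of the Existence Conjecture.
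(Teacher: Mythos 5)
This statement is an open conjecture; the paper offers no proof of it, only motivation (the $r=2$ case via $K_{\lfloor n/2\rfloor-1,\lceil n/2\rceil+1}$, the $r=3$ case via Proposition~\ref{prop:cons2}, and the parity-based Proposition~\ref{prop:odd_cons}, which concerns exact rather than fractional systems and therefore does not yield a fractional lower bound). Your proposal is accordingly not a proof either, but as a research program it is sound and in one respect goes beyond what the paper records: your computation that the multipartite construction gives $c_e = r-2$ for edges of pattern $(2,1,\dots,1,0)$ is correct, and the resulting double count $c_{\min}\binom{n}{r-1}/r$ versus $|\mS|$ indeed fails already at $r=4$ (roughly $n^3/12$ against $5n^3/48$), so even the lower bound of the conjecture is genuinely open for $r\ge 4$ --- a point the paper glosses over by passing to the parity argument for exact systems. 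Your diagnosis of the upper bound is also accurate: the clique-extension requirement forces essential codegree above $\bigl(1 - 1/\binom{s-1}{r-1}\bigr)n$, which exceeds $(r-1)n/r$ whenever $\binom{s-1}{r-1} > r$, i.e.\ for every $s \ge r+2$; this is exactly the obstruction the paper's concluding remarks identify for $r=3$ (the $5/6$ versus $2/3$ gap), and you are right that it worsens with $r$. In short, there is no gap to compare against because neither you nor the paper proves the statement; your proposal correctly identifies why the techniques of this paper cannot settle it and where new ideas (a repaired construction for the lower bound, and absorption- or nibble-type machinery for the upper bound) would be needed.
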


	\section*{Acknowledgement}
The work presented here was developed during the writing of the author's master's thesis, see \cite{mythesis}. The author would like to thank Mathias Schacht for his encouragement to work on this topic, Hyunwoo Lee for his openness to discuss his paper, and Hong Liu for the welcoming stay at the Institute for Basic Science in Daejeon, South Korea.
	\section*{References}
	\printbibliography[heading=none]

@article{hyunwoo,
	author = "Hyunwoo Lee",
	title = "{Towards a high-dimensional Dirac's theorem}",
	journal = "arXiv",
	year = "2023",
	DOI = "10.48550/arXiv.2310.15909"
}

@misc{linial,
	author = "Nati Linial",
	title = "Challenges of high-dimensional combinatorics",
	note = "Lovász's Seventieth Birthday Conference",
	howpublished = {\url{https://www.cs.huji.ac.il/~nati/PAPERS/challenges-hdc.pdf}},
	year = "2018",
	volume = "2"
}

@article{han_person_schacht, 
	title = "On perfect matchings in uniform hypergraphs with large minimum vertex degree",
	author = "Hiep Hàn and Yury Person and Mathias Schacht",
	DOI = "10.1137/080729657",
	year = "2009", 
	journal = "SIAM Journal on Discrete Mathematics", 
	volume = "23", 
	issue = "2",
	pages = "498-504"
}

@article{rödl_rucinski_szemeredi, 
	title = "Perfect matchings in uniform hypergraphs with large minimum degree",
	author = "Vojtěch Rödl and Andrzej Ruciński and Endre Szemerédi",
	DOI = "10.1016/j.ejc.2006.05.008",
	year = "2006", 
	journal = "European Journal of Combinatorics", 
	volume = "27", 
	issue = "8",
	pages = "1333-1349"
}

@article{kühn_osthus_matching, 
	title = "Matchings in hypergraphs of large minimum degree",
	author = "Daniela Kühn and Deryk Osthus",
	DOI = "10.1002/jgt.20139",
	year = "2005", 
	journal = "Journal of Graph Theory", 
	volume = "51", 
	issue = "4",
	pages = "269-280"
}

@article{khan_matching, 
	title = "Perfect matchings in $3$-uniform hypergraphs with large vertex degree",
	author = "Imdadullah Khan",
	DOI = "10.1137/10080796X",
	year = "2011", 
	journal = "SIAM Journal on Discrete Mathematics", 
	volume = "27", 
	issue = "2"
}

@article{treglown_zhao, 
	title = "Exact minimum degree thresholds for perfect matchings in uniform hypergraphs",
	author = "Andrew Treglown and Yi Zhao",
	DOI = "10.1016/j.jcta.2012.04.006",
	year = "2012", 
	journal = "Journal of Combinatorial Theory",
	series = "Series A",
	volume = "119", 
	issue = "7",
	pages = "1500-1522"
}

@article{khan_four_matching, 
	title = "Perfect matchings in $4$-uniform hypergraphs",
	author = "Imdadullah Khan",
	DOI = "10.1016/j.jctb.2015.09.005",
	year = "2016", 
	journal = "Journal of Combinatorial Theory",
	series = "Series B",
	volume = "116", 
	pages = "333-366"
}

@article{rödl_rucinski_szemeredi_co, 
	title = "Perfect matchings in large uniform hypergraphs with large minimum collective degree",
	author = "Vojtěch Rödl and Andrzej Ruciński and Endre Szemerédi",
	DOI = "10.1016/j.jcta.2008.10.002",
	year = "2009", 
	journal = "Journal of Combinatorial Theory", 
	series = "Series A",
	volume = "116", 
	issue = "3",
	pages = "613-636"
}

@article{glock_kühn_lo_osthus, 
	title = "The existence of designs via iterative absorption: Hypergraph $F$-designs for arbitrary $F$",
	author = "Stefan Glock and Daniela Kühn and Allan Lo and Deryk Osthus",
	year = "2023", 
	journal = "Memoirs of the American Mathematical Society",
	volume = "284", 
	DOI = "10.1090/memo/1406",
	pages = "131 pp"
}

@article{keevash_designs,
	title = "The existence of designs",
	author = "Peter Keevash",
	year = "2014", 
	journal = "arXiv", 
	DOI = "10.48550/arXiv.1401.3665"
}

@article{minimalist_designs,
	title = "Minimalist designs",
	author = "Ben Barber and Stefan Glock and Daniela Kühn and Allan Lo and Richard Montgomery and Deryk Osthus", 
	journal = "Random Structures \& Algorithms", 
	year = "2020",
	volume = "57", 
	issue = "1", 
	pages = "47-63",
	DOI = "10.1002/rsa.20915"
}

@article{dirac_original,
	title = "Some theorems on abstract graphs",
	author = "Gabriel Andrew Dirac", 
	journal = "Proceedings of the London Mathematical Society", 
	year = "1952",
	series = "3",
	volume = "2", 
	issue = "1", 
	pages = "69-81",
	DOI = "10.1112/plms/s3-2.1.69"
}

@article{
	delcourt_postle_fractional,
	title = "Progress towards Nash-Williams' conjecture on triangle decompositions",
	author = "Michelle Delcourt and Luke Postle",
	journal = "Journal of Combinatorial Theory",
	series = "Series B",
	year = "2021",
	volume = "146",
	pages = "382-416",
	DOI = "10.1016/j.jctb.2020.09.008"
}

@article{
	dross_fractional, 
	title = "Fractional triangle decompositions in graphs with large minimum degree", 
	author = "François Dross",
	journal = "SIAM Journal on Discrete Mathematics", 
	volume = "30",
	issue = "1", 
	year = "2016",
	pages = "36-42",
	DOI = "10.1137/15M1014310"
}

@article{
	dukes_horsley_fractional,
	title = "On the minimum degree required for a triangle decomposition",
	author = "Peter J. Dukes and Daniel Horsley",
	journal = "SIAM Journal on Discrete Mathematics",
	volume = "34",
	issue = "1", 
	year = "2020",
	DOI = "10.1137/19M1284610"
}

@article{
	barber_fractional_clique,
	title = "Fractional clique decompositions of dense graphs and hypergraphs",
	author = "Ben Barber and Daniela Kühn and Allan Lo and Richard Montgomery and Deryk Osthus",
	journal = "Journal of Combinatorial Theory",
	series = "Series B",
	volume = "127",
	year = "2017",
	DOI = "10.1016/j.jctb.2017.05.005",
	pages = "148-186"
}

@article{nash_williams,
	title = "An unsolved problem concerning decomposition of graphs into
	triangles",
	author = "Crispin St John Alvah Nash-Williams",
	publisher = "North Holland", 
	series = "Combinatorial Theory and its Applications",
	year = "1970",
	issue = "III",
	pages = "1179-1183"
}

@article{
	yuster_fractional_decompositions,
	title = "Asymptotically optimal $K_k$-packings of dense graphs via fractional $K_k$-decompositions",
	author = "Raphael Yuster",
	journal = "Journal of Combinatorial Theory",
	series = "Series B",
	volume = "95",
	issue = "1", 
	year = "2005",
	DOI = "10.1016/j.jctb.2005.02.002",
	pages = "1-11"
}

@Misc{
	lee_personal,
	author       = {Hyunwoo Lee},
	howpublished = {{Personal communication}},
	year         = {2023}
}

@article{
	ravry_zhao,
	author = "Victor Falgas-Ravry and Yi Zhao",
	title = "Codegree thresholds for covering 3-uniform hypergraphs",
	journal = "SIAM Journal on Discrete Mathematics",
	year = "2016", 
	DOI = "10.1137/15M1051452",
	volume = "30",
	issue = "16"
}

@mastersthesis{mythesis,
	title        = {On the existence of Steiner triple systems in 3-uniform hypergraphs},
	author       = {Michael Zheng},
	year         = {2024},
	month        = {June},
	address      = {Hamburg, Germany},
	note         = {Available at \url{https://mx-xz.github.io/assets/pdfs/master_thesis_Michael_Zheng.pdf}},
	school       = {University of Hamburg},
	type         = {Master's thesis}
}
	\appendix 

	\section{Gurobi implementation for (P3)}
	\label{sec:fractional_code}
	Below is an implementation of (P3) as defined above Corollary \ref{cor:8var}. The script was written and run in \textsc{Python} 3.8.10 using the \textsc{Gurobi Optimizer} 10.0.3.
	\lstinputlisting[language=Python]{programming/DelcourtPostle.py}

\end{document}